\newtheorem*{theorem*}{Theorem}
\newtheorem{theorem}{Theorem}[section]
\newtheorem{lemma}[theorem]{Lemma}
\newtheorem{corollary}[theorem]{Corollary}
\newtheorem{proposition}[theorem]{Proposition}
\def\K{K\"ahler }
\def\del{\partial}
\def\dbar{\bar\partial}
\def\ddbar{\del\dbar}
\def\ra{\rightarrow}
\newcommand{\RR}{\mathbb{R}}
\def\del{\partial}
\DeclareMathOperator{\Ric}{Ric}
\def\o{\omega}
\def\PSH{\textup{PSH}}
\title{Metric geometry of normal K\"ahler spaces, energy properness, and existence of canonical metrics \vspace{-0.1in}}
\author{Tam\'as Darvas}
\date{\vspace{-0.2in}}
\begin{document}
\maketitle

\begin{abstract}
Let $(X,\o)$ be a compact normal K\"ahler space, with Hodge metric $\o$. In this paper, the last in a sequence of works studying the relationship between energy properness and canonical K\"ahler metrics, we introduce a geodesic metric structure on $\mathcal H_{\o}(X)$, the space of K\"ahler potentials,  whose completion is the finite energy space $\mathcal E^1_{\o}(X)$. Using this metric structure and the results of Berman--Boucksom--Eyssidieux--Guedj--Zeriahi as ingredients in the  existence/properness principle of Rubinstein and the author, we show that existence of K\"ahler--Einstein metrics on log Fano pairs is equivalent to properness of the  K-energy in a suitable sense. To our knowledge, this result represents the first characterization of general log Fano pairs admitting K\"ahler--Einstein metrics. We also discuss the analogous result for K\"ahler--Ricci solitons on Fano varieties. 
\end{abstract}

%\tableofcontents
\section{Introduction}

To describe the context of our results, let $(X,\o)$ be a compact K\"ahler manifold for the moment, and let $\mathcal H$ denote the space of K\"ahler metrics cohomologous to $\o$. A major direction of study in K\"ahler geometry is to find elements of $\mathcal H$ with special curvature properties. One can introduce Mabuchi's K-energy $\mathcal K:\mathcal H \to \Bbb R$, a functional whose critical points are exactly the constant scalar curvature K\"ahler (cscK) metrics of $\mathcal H$ \cite{mab}. For the precise definition of all functionals, as well as a detailed historical account we refer to \cite{dr2,r2}.

The K-energy is J-proper, if for any  $\{ \o_{u_j}\}_{j \in \Bbb N} \subset \mathcal H$ the following implication holds: 
\begin{equation}
\label{eq: Jpropeness_of_K}
J(\o_{u_j}) \to \infty \textup{ implies } \mathcal K(u_j) \to \infty,
\end{equation}
where $J: \mathcal H \to \Bbb R$ is Aubin's functional \cite{Aub}. Motivated by results of conformal geometry, in the 90's Tian conjectured that existence of constant scalar curvature K\"ahler  metrics in $\mathcal H$ should be equivalent to J-properness of $\mathcal K$ \cite[Remark 5.2]{t1},\cite{t3}, and this was proved for Fano manifolds with discrete automorphism group \cite{t2,tz}.  In \cite{pssw} the ``strong form" of the J-properness condition \eqref{eq: Jpropeness_of_K} was found, saying that $\mathcal K$ grows at least linearly with respect to $J$, and this stronger form has been later adopted throughout the literature.

When the automorpism group is non-discrete it was known that the conjecture cannot hold as stated above, and numerous modifications were proposed by Tian (see \cite[Conjecture 7.12]{t3}, \cite{t4}). In \cite{dr2}, Y.A. Rubinstein and the author disproved one of these conjectures and proved the rest for general Fano manifolds. Crucial to these developments was the existence/properness principle of \cite[Theorem 3.4]{dr2}, that placed Tian's predictions in the context of the $L^1$-Mabuchi geometry of $\mathcal H$, developed in \cite{da1,da2}. The resulting framework is very robust as one can obtain analogous theorems for both conical and soliton type K\"ahler--Einstein metrics \cite[Theorem 2.11,Theorem 2.12]{dr2}, and recently it was shown that in the general K\"ahler case, existence of a cscK metric in $\mathcal H$ implies J-properness of the K-energy \cite[Theorem 1.3]{bdl2}. 

In this work, we apply the existence/properness principle of \cite[Theorem 3.4]{dr2} in the context of singular varieties. To do this, first one has to extend the relevant $L^1$-Mabuchi metric geometry to normal K\"ahler spaces $(X,\o)$. This can be done in an economic manner, by noticing that the concrete formula for the $d_1$ metric from \cite{da2} makes sense in the singular setting as well, hence the extension of the Finsler geometric arguments of \cite{da2} can be avoided (Theorem \ref{thm: metrictheorem}). Finally, one uses this and the results of \cite{bbegz} as ingredients in the existence/properness principle to show that, on a  log Fano pair $(X,D)$, existence of a K\"ahler--Einstein metric is equivalent with J-properness of the K-energy (Theorem \ref{thm: propernesstheorem}). This  gives the first full characterization of general log Fano pairs admitting K\"ahler--Einstein metrics that we are aware of. 

We also mention applications to K-(poly)stability, in particular Theorem \ref{thm: propernesstheorem} and the techniques of \cite[Section 3]{bdl2} give an alternative analytic proof of a deep result of Berman from \cite{brm2} (Corollary \ref{cor: K-polystability}). All of our arguments work for the Ding functional as well, and we also give the analogous properness/existence theorem for K\"ahler--Ricci solitons on Fano varieties (Theorem \ref{thm: propernessthm_soliton}).

\section{Main results}

Suppose $Z$ is a compact complex space. A function $f: Z \to \Bbb R$ is smooth, if given any local embedding $Z \to \Bbb C^m$, $f$ is a restriction to $Z$ of a smooth function on $\Bbb C^m$. The corresponding space of smooth forms and vector-fields, holomorphic maps, plurisubharmonic (psh) functions etc. is defined similarly. 

Given a closed (1,1)-form $\alpha$ on $Z$, the corresponding space of smooth K\"ahler potentials  and $\alpha$-psh functions is denoted by 
$$\mathcal H_\alpha(Z) = \{ u \in C^\infty(Z) \textup{ s.t. } \alpha_u:=\alpha + i\ddbar u > 0\}.$$
$$\PSH_\alpha(Z) = \{ u \in L^{1}(Z)\textup{ is usc and } \ \alpha_u:= \alpha + i\ddbar u \geq 0\}.$$
The space ${\mathcal H}_\alpha(Z)$ is non-empty if and only if $[\alpha]_{dR}$ is a K\"ahler class. On the other hand, the space $\PSH_\alpha(Z)$ is non-empty if and only if $[\alpha]_{dR}$ is a  pseudo-effective class.

Following \cite{begz,bbegz}, the Monge--Amp\`ere (also Aubin--Yau, some times Aubin--Mabuchi) functional $I_\o: \PSH_\o(X) \cap L^\infty \to \Bbb R$ is the following map:
\begin{equation}\label{eq: AMdef}
I_{\o}(u) = \frac{1}{(n+1)V_\o} \sum_{j=0}^n\int_{X_{\textup{reg}}} u {\o}^j \wedge {\o}_u^{n-j},
\end{equation} 
where $V_\o = \int_Y \o^n$ (the notation $\mathcal E_\o(u)$ or $\textup{AM}_\o(u)$ is also often used in the literature).
As $I_{\o}$ is monotone, it makes sense to extend $I_\o$ to a functional $I_{\o} : \PSH_\o(X) \to [-\infty,\infty)$:
\begin{equation}\label{eq: AM_PSHext}
I_{\o}(v) = \inf_{u \in \textup{PSH}_\o(X) \cap L^\infty,  \ v \leq u} I_{\o}(u).
\end{equation}
According to the definitions of \cite{begz,bbegz}, $v \in \mathcal E^1_{\o}(X)$ if and only if $I_{\o}(v) > -\infty$. 

Let $S = \{ 0< \textup{Re }s <  1\} \subset \Bbb C$ and $u_0,u_1 \in \PSH_\o(X) \cap L^\infty$. We call the curve $[0,1] \ni t \to u_t \in \PSH_\o(X) \cap L^\infty$ the \emph{bounded geodesic} connecting $u_0,u_1$ if $v(s,x)=u_{\textup{Re }s}(x) \in \PSH_{\textup{pr}_2^* \o}(S \times X)$ and $v$ solves the following complex Monge-Amp\`ere equation on $S \times X_{\textup{reg}}$:
\begin{equation}\label{eq: weakgeodeq}
(\textup{pr}_2^* \o + i \ddbar v)^{n+1}=0.
\end{equation}
Following Berndtsson \cite{brn1} it was shown in \cite[Lemma 4.5]{bbegz} that any $u_0,u_1 \in \PSH_\o(X) \cap L^\infty$ can be connected by a bounded geodesic $t \to u_t$. 

Given more generally $u_0,u_1 \in \mathcal E^1_{\o}(X)$, let  $u^k_0=\max(u_0,-k),u^k_1=\max(u_1,-k) \in \PSH_\o(X)$. Let $[0,1] \ni t \to u^k_t \in \PSH_\o(X) \cap L^\infty$ be the bounded geodesics connecting $u^k_0,u^k_1$. We will show that the limit curve $[0,1] \ni t \to u_t = \lim_k u^k_t\in \mathcal E^1_{\o}(X)$ is well defined and  is called the \emph{finite energy geodesic} connecting $u_0,u_1 \in \mathcal E^1_{\o}(X)$. 

Given $u_0,u_1 \in \PSH_\o(X)$, one can introduce the following ``rooftop envelope": 
$$P_{\o}(u_0,u_1)=\sup \{v \in \PSH_\o(X) \ \textup{ s.t. } \ v \leq u_0,u_1\}.$$ 
By \cite[Theorem 3]{da1}, in case $(X,\o)$ is smooth K\"ahler, $P_{\o}(u_0,u_1) \in \mathcal E^1_{\o}(X)$ if and only if $u_0,u_1 \in \mathcal E^1_{\o}(X)$. We will show this in our singular setting as well (Theorem \ref{thm:rooftopenergystable}).

We can give now our first main result, extending some of the results of \cite{da1,da2} to the case of normal K\"ahler spaces with a Hodge K\"ahler structure:

\begin{theorem} \label{thm: metrictheorem} Let $(X,\o)$ be a projective normal K\"ahler space, with Hodge K\"ahler metric $\o$. Suppose $d_1^\o: \mathcal E^1_{\o}(X) \times \mathcal E^1_{\o}(X) \to \Bbb R$ is given by 
\begin{equation}\label{eq: d_1 formula}
d^\o_1(u_0,u_1):=I_{\o}(u_0) + I_{\o}(u_1)-2 I_{\o}(P_{\o}(u_0,u_1)).
\end{equation}
Then  $(\mathcal E^1_{\o}(X),d^\o_1)$ is a complete geodesic metric space, coinciding with the metric completion of $({\mathcal H}_{\o}(X),d_1^\o)$. Additionally, given $u_0,u_1 \in \mathcal E^1_{\o}(X)$, the finite energy geodesic $[0,1]\ni t \to u_t \in \mathcal E^1_{\o}(X)$ connecting $u_0,u_1$ is a $d_1^\o$-geodesic curve. %and $d_1^\o$ is independent of the choice of desingularization $\pi:Y\to X$.
\end{theorem}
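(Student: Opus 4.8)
The plan is to reduce everything to the already-established smooth K\"ahler case by a resolution-of-singularities argument, rather than redoing the Finsler-geometric analysis of \cite{da1,da2} from scratch. Let $\pi: \tilde X \to X$ be a resolution of singularities, and write $\tilde\o = \pi^*\o$, which is a smooth semi-positive and big $(1,1)$-form on $\tilde X$ (it is K\"ahler away from the exceptional locus). Pullback $u \mapsto u\circ\pi$ gives a bijection between $\PSH_\o(X)\cap L^\infty$ and $\PSH_{\tilde\o}(\tilde X)\cap L^\infty$, and since $\pi$ is a biholomorphism over $X_{\textup{reg}}$ with the exceptional locus of measure zero for the relevant Monge--Amp\`ere masses, the functional $I_\o$ and all mixed integrals in \eqref{eq: AMdef} are preserved: $I_{\tilde\o}(u\circ\pi) = I_\o(u)$. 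The same invariance holds for the rooftop envelope, $P_{\tilde\o}(u_0\circ\pi,u_1\circ\pi) = P_\o(u_0,u_1)\circ\pi$, because psh functions extend across analytic sets and the pushforward identifies the two sup-envelopes. Consequently the formula \eqref{eq: d_1 formula} for $d_1^\o$ on $X$ matches the formula for $d_1^{\tilde\o}$ on $\tilde X$, and finite energy geodesics on $X$ are exactly the $\pi$-pushforwards of finite energy geodesics on $\tilde X$ (the defining complex Monge--Amp\`ere equation \eqref{eq: weakgeodeq} pulls back, using that the product resolution $S\times\tilde X \to S\times X$ works over $S\times X_{\textup{reg}}$, and the $\max$-truncations commute with pullback).

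Next I would invoke the known results in the \emph{big and semi-positive} setting on the smooth manifold $\tilde X$. The metric theory of $d_1$ on $\mathcal E^1$ has been developed not only for K\"ahler classes but for big cohomology classes (following \cite{begz} and the $d_p$-metric work building on \cite{da1,da2}); in particular $(\mathcal E^1_{\tilde\o}(\tilde X), d_1^{\tilde\o})$ is a complete geodesic metric space, it is the completion of $(\Hco(\tilde X), d_1^{\tilde\o})$ suitably interpreted, and finite energy geodesics are $d_1$-geodesics there. Transporting this along the bijection $\cdot\circ\pi$ gives the completeness, the geodesic property of finite energy geodesics, and the metric-space axioms (symmetry, the triangle inequality, and non-degeneracy $d_1^\o(u_0,u_1)=0 \Rightarrow u_0=u_1$) for $(\mathcal E^1_\o(X), d_1^\o)$ immediately. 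The identification of $(\mathcal E^1_\o(X),d_1^\o)$ with the metric completion of $(\Hco(X), d_1^\o)$ then requires showing that $\Hco(X)$ is $d_1^\o$-dense in $\mathcal E^1_\o(X)$; this follows from the standard approximation of finite energy potentials by decreasing sequences of bounded (then, using partitions of unity and regularization on local embeddings, smooth) $\o$-psh potentials, combined with the $d_1$-continuity of $I_\o$ along monotone sequences, which is a consequence of Theorem \ref{thm:rooftopenergystable} and the monotonicity/continuity properties of $I_\o$.

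The one genuine subtlety—and the step I expect to be the main obstacle—is verifying that the smoothness notion intrinsic to $X$ (restrictions of smooth functions under local embeddings $X\hookrightarrow\CC^m$) interacts correctly with the resolution, i.e.\ that $\Hco(X)$ really does pull back \emph{into} $\Hco(\tilde X)$ and that one has enough smooth potentials on $X$ to run the density argument; on the resolution one only gets $\tilde\o$-psh functions that are smooth but whose associated form is merely $\geq 0$ along the exceptional divisor, so one must be careful that "smooth K\"ahler potential on $X$" and the cohomology-class bookkeeping are consistent. Relatedly, one must confirm that all of \cite{begz,bbegz}'s big-class machinery that I am importing on $\tilde X$ (monotonicity of $I_{\tilde\o}$, the envelope characterization of $\mathcal E^1$, the orthogonality relation for envelopes used to prove the triangle inequality) is applicable to this particular semi-positive and big class $[\tilde\o]$, which it is, since $[\tilde\o]$ contains a K\"ahler current and the non-pluripolar product is well-behaved. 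Once this bridge is in place, all three assertions of the theorem—metric space structure, identification with the completion of $\Hco(X)$, and the fact that finite energy geodesics are $d_1^\o$-geodesics—follow by transporting the corresponding statements from $\tilde X$ to $X$ along $\pi$.
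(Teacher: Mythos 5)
Your reduction to the resolution $\pi\colon \tilde X\to X$ with $\tilde\o=\pi^*\o$, together with the transport of $I$, $P$, geodesics and the metric along $\pi^*$, is exactly the framework of the paper (its Section 4 and Theorem \ref{thm: mainmetricthm_singular}). However, the pivotal step of your argument --- ``invoke the known results in the big and semi-positive setting on $\tilde X$: $(\mathcal E^1_{\tilde\o},d_1^{\tilde\o})$ is a complete geodesic metric space, the completion of the smooth potentials, with finite energy geodesics as metric geodesics'' --- is a citation to a theory that did not exist and is in fact the very content of the theorem. The results of \cite{da1,da2} are established only for K\"ahler classes, and the paper states explicitly that the Finsler-geometric arguments (and the geodesic regularity estimates of \cite{c1,bl1} underlying them) do not carry over to the degenerate class $[\tilde\o]$. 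What the paper actually does on $\tilde X=Y$ is: fix an auxiliary K\"ahler form $\eta'$ on the projective manifold $Y$, use the inclusions $\PSH_\eta(Y)\cap L^\infty\subset\PSH_{\eta+\veps\eta'}(Y)\cap L^\infty$ to apply the smooth K\"ahler theory in the classes $[\eta+\veps\eta']$, and pass to the limit $\veps\to0$ (Proposition \ref{prop: lotsoflimits}) to get the triangle inequality, the contractivity of $P_\eta(v,\cdot)$, and the geodesic property; prove non-degeneracy by a separate domination principle for $I_\eta$ (Proposition \ref{prop: energy_dom}); prove that $P_\eta$ preserves $\mathcal E^1_\eta$ via the Berman--Demailly regularity of envelopes and a Monge--Amp\`ere partition formula (Theorem \ref{thm:rooftopenergystable}); and prove completeness by an envelope-based Cauchy-sequence argument (Proposition \ref{prop: E1complete}). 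None of this is subsumed by the orthogonality/monotonicity machinery of \cite{begz,bbegz} that you appeal to. (Care is also needed with the extension to unbounded potentials: $\mathcal E^1_\eta(Y)\not\subset\mathcal E^1_{\eta+\veps\eta'}(Y)$ in general, cf.\ \cite[Example 3.5]{dn}, so the $\veps$-approximation must be run on bounded potentials first and then extended by truncation.)

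A second, smaller but genuine gap is your density argument. On a singular space one cannot produce smooth $\o$-psh approximants by ``partitions of unity and regularization on local embeddings''; gluing local regularizations destroys plurisubharmonicity and this is a well-known open difficulty (the acknowledgments note that the general approximation result of \cite{egz2} had an incomplete proof). The paper instead uses the extension theorem of \cite{cgz} (Theorem \ref{thm: EGZapprox}), which requires $[\o]$ to be integral --- this is precisely why the Hodge hypothesis appears in the statement, a point your proof does not account for.
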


Following the notation and terminology of \cite[Section 3.1]{bbegz}, by a \textit{pair} $(X,D)$ we mean a projective normal variety $X$ and an effective $\Bbb Q$-divisor $D$ such that $K_{X}+D$ is $\Bbb Q$-Cartier. Given a log-resolution $\pi: Y \to X$, we can always assume that $\pi$ is biholomoprhic on  $X_{reg}\setminus D$. For such a resolution there exists a $\Bbb Q$-divisor $\sum_j a_j E_j \subset Y$ such that 
\begin{equation}\label{eq: canbundid}
K_Y \cong \pi^* (K_{X}+D) + \sum_j a_j E_j. 
\end{equation}
The pair $(X,D)$ is klt (short for Kawamata log terminal) if $a_j >-1$. 
A \textit{log Fano pair} is a klt pair $(X,D)$ such that $-K_{X} -D$ is ample. Suppose $\o \in c_1(-K_{X}-D)$ is a fixed K\"ahler metric. We say that $u \in \PSH_{\o}(X)\cap L^\infty$ is a K\"ahler--Einstein potential for the pair $(X,D)$ if %$\int_{X_{reg}} \o_u^n = \int_{X_{reg}} \o^n$, 
$u$ is smooth on $X_{reg}\setminus D$ and $\o_u$ satisfies the ``twisted" K\"ahler--Einstein equation on $X_{reg}$:
\begin{equation}\label{eq: KE_eq}
\Ric \o_u = \o_u + [D].
\end{equation}
By \cite[Theorem 4.8]{bbegz}, $u \in \PSH_{\o}(X)\cap L^\infty$ is a K\"ahler--Einstein potential if and only if $u \in \PSH_{\o}(X)\cap L^\infty$ is a minimizer of the (extended) K-energy functional $\mathcal K_D: \PSH_{\o}(X)\cap L^\infty \to [-\infty,\infty)$ or equivalently it is a minimizer the corresponding extended Ding functional $\mathcal F_D: \PSH_{\o}(X)\cap L^\infty \to \Bbb R$ (see Section \ref{sec: existence/properness thm} for precise definitions).

Recall that the functional $J: \mathcal E^1_\o(X) \to [0,\infty)$ is just defined by 
$$J(u)=\int_{X_{reg}} u \o^n - I_\o(u).$$ 
Let $G=\textup{Aut}_0(X,J,D)$ be the identity component of the group of holomorphic automorphisms of $(X,J)$ fixing $D$. The group $G$ acts on $\mathcal H_\o^0(X):={\mathcal H}_{\o}(X) \cap I_\o^{-1}(0)$, the \emph{space of K\"ahler metrics}, and as in \cite{dr2}, one can introduce $J_G: {\mathcal H}^0_\o(X)/G \to \Bbb R^+$, the ``$G$-dampened" version of the $J$ functional:
$$J_G(Gu)= \inf_{g \in G} J(g.u).$$
In case $G$ is trivial, $J_G$ just equals the regular $J$ functional. With Theorem \ref{thm: metrictheorem} as an ingredient in the existence/properness principle of Rubinstein and the author \cite[Theorem 3.4]{dr2}, we obtain the following result extending \cite[Theorem 2.12]{dr2} from the smooth K\"ahler case:
 
\begin{theorem}\label{thm: propernesstheorem} Suppose $(X,D)$ is log Fano pair and $\o \in c_1(-K_{X}-D)$ a K\"ahler metric. Then there exists a K\"ahler--Einstein potential $u \in \PSH_{\o}(X) \cap L^\infty$  if and only if $F \in \{\mathcal K_D,\mathcal F_D \}$ is $G$-invariant and for some $C,B> 0$ satisfies:
\begin{equation}\label{eq: main_prop_equation}
F(u) \geq CJ_{G}(Gu)-B, \ \ u \in {\mathcal H}^0_\o(X).
\end{equation}
\end{theorem}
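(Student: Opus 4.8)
The plan is to derive Theorem \ref{thm: propernesstheorem} by feeding Theorem \ref{thm: metrictheorem}, together with the analytic results of \cite{bbegz}, into the abstract existence/properness principle \cite[Theorem 3.4]{dr2}. One applies that principle to the complete geodesic metric space $(\mathcal R,d):=(\mathcal E^1_\o(X)\cap I_\o^{-1}(0),\,d^\o_1)$, whose dense subset is the space of K\"ahler metrics $\mathcal H^0_\o(X)$, with functional $F\in\{\mathcal K_D,\mathcal F_D\}$ and group $G=\mathrm{Aut}_0(X,J,D)$. Passing to the slice $I_\o^{-1}(0)$ is harmless: by \eqref{eq: d_1 formula} one has $|I_\o(u)-I_\o(v)|\le d^\o_1(u,v)$, so $I_\o$ is $d^\o_1$-continuous, and $I_\o$ is affine along finite energy geodesics; hence Theorem \ref{thm: metrictheorem} descends to the slice, which is therefore a complete geodesic metric space equal to the metric completion of $(\mathcal H^0_\o(X),d^\o_1)$, with finite energy geodesics realizing the $d^\o_1$-geodesics. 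It then remains to verify the hypotheses of \cite[Theorem 3.4]{dr2}, which split into metric/group-theoretic ones and functional-theoretic ones.

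\emph{Metric and group hypotheses.} Completeness, the geodesic structure, and density of the smooth subset are the content of Theorem \ref{thm: metrictheorem}. That $G$ acts by $d^\o_1$-isometries on $\mathcal H^0_\o(X)$, hence, by uniqueness of the extension, isometrically on $\mathcal R$, follows as in \cite[Section 3]{dr2},\cite{da2}: each $g\in G$ sends $u$ to the unique $g\cdot u\in\mathcal H^0_\o(X)$ with $\o_{g\cdot u}=g^*\o_u$, and \eqref{eq: d_1 formula} is manifestly natural under such pullbacks, $I_\o$ and the rooftop envelope $P_\o(\cdot,\cdot)$ being so. One also needs the two-sided comparison $C^{-1}d^\o_1(u,Gu_{\mathrm{ref}})-C\le J_G(Gu)\le C\,d^\o_1(u,Gu_{\mathrm{ref}})+C$ for a fixed $u_{\mathrm{ref}}\in\mathcal H^0_\o(X)$; its proof in \cite[Section 3]{dr2} uses only \eqref{eq: d_1 formula}, the identity $J(u)=\int_{X_{reg}}u\,\o^n-I_\o(u)$, and elementary monotonicity of $I_\o$ on $\mathcal E^1_\o(X)$, so it transcribes without change.

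\emph{Functional hypotheses.} $G$-invariance of $F$ is immediate, both functionals being built intrinsically from $\o_u$ and the $G$-fixed divisor $D$; in particular the ``$G$-invariant'' clause of the statement is automatic. For lower semicontinuity one uses, up to additive constants, $\mathcal F_D(u)=-I_\o(u)-\log\int_X e^{-u}\mu_D$ and, via the Chen--Tian-type formula, $\mathcal K_D(u)=\Ent(\o_u^n\,|\,\mu_D)+(\text{energy terms, $d^\o_1$-continuous})$, with $\mu_D$ the adapted measure of the pair: the Monge--Amp\`ere energy parts are $d^\o_1$-continuous on $\mathcal R$, the relative entropy is $d^\o_1$-lower semicontinuous, and, the pair being klt, a uniform version of Skoda's integrability theorem makes $u\mapsto\log\int_X e^{-u}\mu_D$ continuous on $d^\o_1$-bounded subsets of $\mathcal R$; all of this is in \cite{bbegz} and yields $d^\o_1$-lower semicontinuity of $F$. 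Convexity along finite energy geodesics, hence by Theorem \ref{thm: metrictheorem} along $d^\o_1$-geodesics, holds for $\mathcal F_D$ by Berndtsson's theorem in the form valid on normal K\"ahler spaces (\cite{bbegz}); for $\mathcal K_D$ one either invokes convexity of the K-energy in the same setting or reduces to the $\mathcal F_D$ case as in the next paragraph. Finally, \cite[Theorem 4.8]{bbegz} identifies the minimizers of $\mathcal K_D$, equivalently of $\mathcal F_D$, over $\mathcal R$: each lies in $\PSH_\o(X)\cap L^\infty$ and is a K\"ahler--Einstein potential for $(X,D)$, and conversely; this is the ``minimizer is the canonical metric'' input of the principle.

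With these verified, \cite[Theorem 3.4]{dr2} delivers both implications. For ``properness $\Rightarrow$ existence'' one runs the metric direct method: by the comparison above and \eqref{eq: main_prop_equation}, a minimizing sequence for $F$ is $d^\o_1$-bounded modulo $G$; since $d^\o_1$-bounded subsets of $\mathcal E^1_\o(X)$ are relatively $L^1$-compact (\cite{bbegz}), one extracts, after applying $G$ and passing to a subsequence, an $L^1$-limit which, by $d^\o_1$-lower semicontinuity of $F$, is a minimizer, hence a K\"ahler--Einstein potential; this direction uses no geodesic convexity. The converse, ``existence $\Rightarrow$ properness'', is the delicate half and I expect it to be the main obstacle: following \cite{dr2},\cite{bdl2}, one argues by contradiction, extracting from a hypothetical sequence with $J_G\to\infty$ but $F$ bounded a limiting finite energy geodesic ray issuing from a K\"ahler--Einstein potential $u_{\mathrm{KE}}$ along which $F$ is, by convexity, minimality and lower semicontinuity, constant; such a ray then consists of minimizers, so by uniqueness of K\"ahler--Einstein potentials modulo $G$ it lies inside the orbit $Gu_{\mathrm{KE}}$, whereas its distance to $Gu_{\mathrm{KE}}$ grows without bound, a contradiction. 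For $F=\mathcal K_D$ this properness direction reduces to the $\mathcal F_D$ case through the inequality $\mathcal K_D\ge\mathcal F_D$, the two functionals having the same minimizers by \cite[Theorem 4.8]{bbegz}. The inputs this argument requires on $(X,\o)$ beyond Theorem \ref{thm: metrictheorem}, namely $L^1$-compactness of $d^\o_1$-balls, lower semicontinuity and geodesic convexity of the relevant functional against the singular measure $\mu_D$, and Berndtsson-type uniqueness of K\"ahler--Einstein potentials modulo $G$, are precisely those furnished by \cite{bbegz}; what remains is the bookkeeping needed to match the hypothesis list of \cite[Theorem 3.4]{dr2}.
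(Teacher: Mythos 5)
Your proposal follows essentially the same route as the paper: it feeds Theorem \ref{thm: metrictheorem} and the results of \cite{bbegz} into the existence/properness principle \cite[Theorem 3.4]{dr2} for $\mathcal F_D$, and treats $\mathcal K_D$ via the inequality $\mathcal F_D\le\mathcal K_D$ in the existence-to-properness direction and a direct compactness/lower-semicontinuity argument (needing no geodesic convexity) in the other. The only caveat is that your first option for $\mathcal K_D$ --- invoking geodesic convexity of the K-energy on normal K\"ahler spaces --- is exactly what the paper points out is \emph{not} currently available in this singular setting; but since your fallback reduction to $\mathcal F_D$ is the paper's actual argument, this is not a gap.
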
 
It is well known that existence of a K\"ahler--Einstein potential implies that $F$ is $G$-invariant. Also, \eqref{eq: main_prop_equation} automatically implies that $F \in \{\mathcal K_D,\mathcal F_D \}$ is $G$-invariant, hence this last condition is superfluous in the above theorem. However the $G$-invariance condition guarantees that $F$ descends to the quotient ${\mathcal H}_{0}(X)/G$, and we think that the estimate of \eqref{eq: main_prop_equation} should be naturally understood in this context (see also \cite{dr2}). 

In the context of singular varieties, the above result generalizes \cite[Theorem A(ii)]{bbegz}, which proves the if direction in the case of log Fano pairs with discrete automorphism group. In the singular context, to our knowledge, the only if direction has not been addressed in the literature so far, and Theorem \ref{thm: propernesstheorem} seems to give the first full characterization of log Fano pairs admitting K\"ahler--Einstein metrics. For a recent characterization of smoothable Fano variaties using K-polystability, see \cite{ssy}. 

In case X is smooth and $D= \sum_j (1-\beta_j)Y_j$ is a simple normal crossing divisor, the K\"ahler--Einstein metrics of the pair $(X;D)$ are in fact K\"ahler--Einstein edge metrics with cone angle $2\pi(1-\beta_j)$ along each $Y_j$ \cite{gp,jmr}  (\cite{jmr} treats only the case $j=1$, but gives more delicate regularity near $D$; see also \cite{mr}, \cite[Section 4.5]{r} for other approaches to the general case).

In case $(X,D)$ is of log general type, i.e., $K_X +D >0$, existence of K\"ahler--Einstein metrics has been discussed in detail in \cite{bg}. 

Given that the findings of \cite{da2, dr2} where used in establishing the results of \cite{bbj}, it would be interesting to see if Theorems \ref{thm: metrictheorem} and \ref{thm: propernesstheorem} could be useful in generalizing the main result of \cite{bbj} to log Fano pairs. Nonetheless, let us mention that as a corollary of Theorem \ref{thm: propernesstheorem} and the techniques of \cite[Section 3]{bdl2} we obtain an alternative proof of a deep result of Berman from \cite[Theorem 1.1]{brm2}:

\begin{corollary}\label{cor: K-polystability} Suppose $(X,\o)$ is a  Fano variety and $\o \in c_1(-K_X)$ is a K\"ahler metric. If there exists a K\"ahler--Einstein potential $u \in \textup{PSH}_\o(X) \cap L^\infty$, then $X$ is K-polystable. 
\end{corollary}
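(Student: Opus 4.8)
The plan is to combine Theorem \ref{thm: propernesstheorem} (in the case $D=0$, so that $X$ is a $\Bbb Q$-Fano variety) with the geodesic ray techniques of \cite[Section 3]{bdl2}, reproving the result of \cite{brm2} along the route where the Ding functional is the main player. Since a K\"ahler--Einstein potential exists, Theorem \ref{thm: propernesstheorem} yields constants $C,B>0$ with
\begin{equation*}
\mathcal F(u) \geq C J_G(Gu)-B, \qquad u \in \mathcal H^0_\o(X),
\end{equation*}
where $\mathcal F$ denotes the Ding functional of $X$ and $G=\textup{Aut}_0(X)$ (reductive, since $X$ admits a K\"ahler--Einstein metric). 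First I would extend this inequality to $\mathcal E^1_\o(X)$: by Theorem \ref{thm: metrictheorem} the space $\mathcal H^0_\o(X)$ is $d_1^\o$-dense in $\mathcal E^1_\o(X)\cap I_\o^{-1}(0)$, and on uniformly bounded families $\mathcal F$ is $d_1^\o$-continuous (it is $-I_\o$, which is $d_1^\o$-continuous, plus a logarithmic energy of a measure with klt density), while $J_G$ is $d_1^\o$-continuous throughout; since $\mathcal F$ and $J_G$ are invariant under adding constants, the inequality holds for all bounded $u \in \mathcal E^1_\o(X)$.

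Next I would fix a normal, relatively ample test configuration $\mathcal T=(\mathcal X,\mathcal L)$ for $(X,-K_X)$ and let $[0,\infty)\ni t\mapsto \phi_t\in\mathcal E^1_\o(X)$ be the associated finite energy geodesic ray, normalized by $\phi_0=0$. The two facts I would use are: (i) $t\mapsto\mathcal F(\phi_t)$ is convex, by Berndtsson's subharmonicity theorem \cite{brn1} applied to the logarithmic term of $\mathcal F$, the $-I_\o$ part being affine along geodesics; and (ii) the slope identity $\lim_{t\to\infty}\mathcal F(\phi_t)/t = D^{\textup{NA}}(\mathcal T)$, where $D^{\textup{NA}}$ is the non-Archimedean Ding invariant. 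Both of these, together with the existence and boundedness of the ray in the normal klt setting, I would obtain from the pluripotential and thermodynamical formalism of \cite{begz,bbegz} combined with Theorem \ref{thm: metrictheorem}. Feeding the extended properness inequality into (ii), for all $t\geq 0$ one has $\mathcal F(\phi_t)\geq C J_G(G\phi_t)-B\geq -B$, and dividing by $t$ and letting $t\to\infty$ gives $D^{\textup{NA}}(\mathcal T)\geq 0$: thus $X$ is Ding-semistable.

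To upgrade to polystability, suppose $D^{\textup{NA}}(\mathcal T)=0$. Then by (i) the convex function $t\mapsto\mathcal F(\phi_t)$ has vanishing asymptotic slope, hence is non-increasing, so $\mathcal F(\phi_t)\leq\mathcal F(0)$ for all $t$; the extended properness inequality then forces $\sup_{t\geq 0}J_G(G\phi_t)<\infty$. Here I would invoke the structural results of \cite[Section 3]{bdl2}: a finite energy geodesic ray along which $J_G$ remains bounded is, modulo $G$, generated by a holomorphic vector field lying in the Lie algebra of $G$; hence $\phi_t$ is the orbit of $0$ under a one-parameter subgroup of $G$, and $\mathcal T$ is a product test configuration. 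This gives $D^{\textup{NA}}(\mathcal T)>0$ for every non-product $\mathcal T$, i.e. $X$ is Ding-polystable, which for $\Bbb Q$-Fano varieties is equivalent to K-polystability (see, e.g., \cite{bbj}; alternatively, $M^{\textup{NA}}\geq D^{\textup{NA}}$, with equality on product test configurations). The hardest part will be this last step: ensuring that the slope identity and the convexity of $\mathcal F$ along the ray really are available over a normal klt base (this is precisely where the results of \cite{bbegz} enter), and checking that boundedness of $J_G$ along the geodesic ray propagates, through the singular non-Archimedean correspondence, to the conclusion that $\mathcal T$ itself---and not merely its geodesic ray---is a product.
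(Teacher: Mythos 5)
Your proposal takes essentially the same route as the paper: the paper offers no written-out proof of Corollary \ref{cor: K-polystability}, stating only that it follows from Theorem \ref{thm: propernesstheorem} together with the geodesic-ray techniques of \cite[Section 3]{bdl2}, and your sketch (properness of the Ding functional, slope of $\mathcal F$ along the ray attached to a test configuration, and the bounded-$J_G$ analysis for the polystability upgrade) is precisely a fleshing-out of that route. The points you flag as delicate (the slope identity over a normal klt base and the product-configuration conclusion from bounded $J_G$) are exactly the ingredients imported from \cite{bdl2} and \cite{brm2}, so your outline is consistent with the paper's intended argument.
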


Finally, our results and methods carry over to the setting of K\"ahler--Ricci solitons on Fano varieties $(X,\o)$, explored in \cite{bwn}. We say that a normal K\"ahler space $(X,\o)$ is a \textit{Fano variety} if $(X,0)$ is a log Fano pair and $\o \in c_1(-K_X)$. Suppose $V$ is a holomorphic vector field on $X$, generating a $T$-torus action on $X$, and $\o$ is $T$-invariant. By $\mathcal H_\o^T(X),\textup{PSH}^T_\o(X),\mathcal E_\o^{1T}, \mathcal H_\o^{0T}$ we denote the appropriate set of $T$-invariant potentials and metrics. 

We say that $u \in \PSH^T_\o(X) \cap L^\infty$ is a K\"ahler--Ricci soliton potential if %$\int_{X_reg}\o_u^n = \int_{X_reg}\o^n$, 
$u$ is smooth on $X_{reg}$, and on this set $\o_u$ satisfies
$$\Ric \o_u = \o_u + \mathcal L_V \o_u.$$
Let $G=\textup{Aut}_0(X,J,V)$ be the identity component of the group of holomorphic automorphisms commuting with $V$, and $F \in \{ \mathcal K_V,\mathcal F_V\}$ be the appropriate soliton version of the K-energy and Ding functional \cite{bwn}. We note the following result, extending both \cite[Theorem 2.11]{dr2} and \cite[Theorem 1.6]{bwn}:
\begin{theorem} \label{thm: propernessthm_soliton} Suppose $(X,\o)$ is a Fano variety and $V$ a holomorphic vector field on $X$, generating a $T$-torus action on $X$. Then a K\"ahler--Ricci soliton potential exists in $\PSH^T_\o(X) \cap L^\infty$ if and only if $F \in \{\mathcal K_V,\mathcal F_V \}$ is $G$-invariant and for some $C,B>0$ satisfies:
\begin{equation}\label{eq: main_prop_soliton_ineq}
F(u) \geq CJ_{G}(Gu)-B, \ \ u \in {\mathcal H}^{0T}_{\o}(X).
\end{equation}
\end{theorem}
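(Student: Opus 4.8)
The plan is to apply the existence/properness principle of Rubinstein and the author \cite[Theorem 3.4]{dr2} in the $T$-invariant setting, using Theorem \ref{thm: metrictheorem} (restricted to the $T$-invariant subspace) as the metric-geometric input, exactly as Theorem \ref{thm: propernesstheorem} is deduced from Theorem \ref{thm: metrictheorem}. The abstract principle requires: (i) a complete geodesic metric space $(\mathcal R, d)$ with a dense subset on which the relevant energy functional is defined; (ii) the functional $F$ is convex along geodesics and lower semicontinuous in the $d$-topology; and (iii) a compactness property, namely that sublevel sets of $F$ modulo the group action $G$ are $d$-compact after the $J_G$-normalization. First I would verify that all structures restrict to the $T$-invariant category: since $\o$ is $T$-invariant, $\mathcal E_\o^{1T}$ is $d_1^\o$-closed in $\mathcal E_\o^1(X)$, the rooftop envelope $P_\o(u_0,u_1)$ of $T$-invariant potentials is $T$-invariant (being a sup of $T$-invariants, using invariance of $\o$), the finite energy geodesics connecting $T$-invariant endpoints stay $T$-invariant (uniqueness of the bounded geodesics forces this), and hence $(\mathcal E_\o^{1T}(X), d_1^\o)$ is itself a complete geodesic metric space, the completion of $(\mathcal H_\o^T(X), d_1^\o)$.

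Next I would recall from \cite{bwn} the definitions of the soliton-modified energies $\mathcal K_V$ and $\mathcal F_V$: these are the Mabuchi/Ding functionals twisted by the holomorphic vector field $V$, whose critical points on $\mathcal H_\o^T(X)$ are precisely the K\"ahler--Ricci soliton potentials. The key analytic facts to import from \cite{bwn} (and the weak-geodesic convexity machinery of \cite{bbegz,bdl2}) are: $\mathcal F_V$ and $\mathcal K_V$ are convex and continuous (resp. lsc) along finite energy geodesics in $\mathcal E_\o^{1T}(X)$, they extend to the completion, and a soliton potential is a minimizer of $F$ on the $T$-invariant finite energy class. I would also record that $G = \textup{Aut}_0(X,J,V)$ acts by $d_1^\o$-isometries on $\mathcal H_\o^{0T}(X)$ (through its action on potentials, as in \cite{dr2}), so that $J_G$ is well-defined on the quotient. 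With these in hand, the ``if'' direction follows: the properness estimate \eqref{eq: main_prop_soliton_ineq} plus $G$-invariance gives, via \cite[Theorem 3.4]{dr2}, a $d_1^\o$-minimizer of $F$ in $\mathcal E_\o^{1T}(X)$, and the regularity theory of \cite{bbegz,bwn} (elliptic regularity for the complex \MA/soliton equation away from $X_{reg}$'s singularities, boundedness from the a priori estimates) upgrades this minimizer to an honest soliton potential in $\PSH_\o^T(X) \cap L^\infty$ that is smooth on $X_{reg}$. The ``only if'' direction is the standard argument that existence of a minimizer forces $G$-invariance and the linear growth estimate, again packaged in \cite[Theorem 3.4]{dr2}.

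The main obstacle I expect is verifying the compactness/properness hypothesis (iii) of the abstract principle in the $T$-invariant singular setting, i.e. that $d_1^\o$-bounded subsets of $F$-sublevel sets in $\mathcal E_\o^{1T}(X)$ are $d_1^\o$-precompact. On a smooth manifold this rests on the entropy compactness of \cite{bdl2} together with the relation between $d_1$ and the entropy functional; in the singular case one must instead lean on the compactness statements of \cite{bbegz} (and their soliton analogues in \cite{bwn}) phrased in terms of the energy $I_\o$ and the Aubin functional $J$. The technical point is that $d_1^\o$-convergence is controlled by $I_\o$-convergence via the explicit formula \eqref{eq: d_1 formula} and Theorem \ref{thm: metrictheorem}, so the $L^1$-type compactness of finite energy classes from \cite{bbegz} should translate to $d_1^\o$-compactness of the normalized sublevel sets; but one needs the $T$-equivariance to be preserved under all the limiting operations (decreasing/increasing limits, projections onto $\mathcal E^1$), which is where I would be most careful. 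A secondary subtlety is checking that $J_G$ is lower semicontinuous and that the $G$-orbits are $d_1^\o$-closed, so that the infimum defining $J_G(Gu)$ is attained or at least behaves well under limits — this follows as in \cite[Section 5--6]{dr2} once one knows $G$ acts isometrically and the relevant functionals are $G$-invariant. Modulo these equivariance bookkeeping points, the proof is a direct transcription of the argument for Theorem \ref{thm: propernesstheorem}, with $(X,D)$, $\mathcal K_D$, $\mathcal F_D$ replaced by $(X,0)$, $\mathcal K_V$, $\mathcal F_V$ and all function spaces replaced by their $T$-invariant counterparts.
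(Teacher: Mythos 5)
Your overall strategy is exactly the paper's: feed the data $(\mathcal R,d,F,G)=(\PSH^T_\o(X)\cap L^\infty\cap I_\o^{-1}(0),\,d_1^\o,\,\mathcal F_V,\,\textup{Aut}_0(X,V))$ into the existence/properness principle of \cite[Theorem 3.4]{dr2}, verifying \ref{p1}--\ref{p7} by combining the $T$-invariant restriction of Theorem \ref{thm: metrictheorem} with the soliton-specific inputs from \cite{bwn} (geodesic convexity of $\mathcal F_V$, regularity of minimizers, uniqueness modulo $G$, reductivity of $G$ when a soliton exists), and this matches the paper's proof point for point.

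The one place where your plan as written would break is the case $F=\mathcal K_V$: you propose to import convexity of the soliton K-energy along finite energy geodesics and run the abstract principle with $F=\mathcal K_V$ directly, but in the singular setting this convexity is precisely what is \emph{not} available (the paper states explicitly, for $\mathcal K_D$, that the convexity proof of \cite{bb} does not carry over, and the same caveat applies to $\mathcal K_V$). The paper's workaround, which you should adopt, is to run the principle only for the Ding-type functional $\mathcal F_V$ and then handle $\mathcal K_V$ indirectly: the inequality $\mathcal F_V\leq \mathcal K_V$ gives (existence of soliton) $\Rightarrow$ (properness of $\mathcal K_V$), while for the converse one uses the properness estimate to produce a $d_1^\o$-bounded minimizing sequence for $\mathcal K_V$, extracts a minimizer via the compactness theorem of \cite{bbegz}/\cite{bwn}, and identifies it as a soliton potential by the variational characterization of minimizers. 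With that substitution your argument coincides with the paper's; the remaining equivariance bookkeeping you flag (closedness of $\mathcal E^{1T}_\o$ under the limiting operations, $T$-invariance of envelopes and geodesics) is handled exactly as you describe.
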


\section{Preliminaries}

\subsection{Finite energy pluripotential theory on normal K\"ahler spaces}

We fix a projective compact normal K\"ahler space $(X,\o)$.  Let $\pi: Y \to X$ be a desingularization of $Y$ with $\eta = \pi^* \o$. In general $\eta$ is not K\"ahler, but $Y$ is projective, hence it admits a K\"ahler structure $(Y,\eta')$. 
As the fibers of $\pi$ are connected, and away from an analytic set $\pi$ is biholomorphic, it is standard to show that pulling back by $\pi$ induces the following identifications \cite[Theorem 1.7]{de1}:
\begin{equation}\label{eq: PSH_eqv}
\pi^*\PSH_{\o}(X) = \PSH_\eta(Y) \ \ \textup{ and } \ \  \pi^*(\PSH_{\o}(X) \cap L^\infty) = \PSH_\eta(Y) \cap L^\infty.\ \  
\end{equation}
Following \cite{egz1,bbegz}, using these identities one can translate questions about canonical metrics on the ample, but singular structure $(X,\o)$, to similar questions on the smooth, but degenerate structure $(Y,\eta)$, and we will resort to this trick  throughout this work. 

The fact that  the inclusion map embeds $\PSH_\eta(Y)$ into $\PSH_{\eta+ \varepsilon \eta'}(Y)$ for any $\varepsilon \geq 0$, will allow us to ``approximate" the degenerate structure $(Y,\eta)$ with the K\"ahler structures $(Y,\eta+\varepsilon\eta')$.

In analogy with \eqref{eq: AMdef}, the Monge-Amp\`ere energy $I_\eta: \PSH_\eta(Y) \cap L^\infty \to \Bbb R$ of $(Y,\eta)$  is just the following map:
\begin{equation}\label{eq: AMdef_semipos}
I_{\eta}(u) = \frac{1}{(n+1)V_\eta} \sum_{j=0}^n\int_{Y} u {\eta}^j \wedge {\eta}_u^{n-j},
\end{equation}
where $V_\eta = \int_Y \eta^n$. As $I_{\eta}$ is monotone, it makes sense to extend this definition to a functional $I_{\eta} : \PSH_\eta(Y) \to [-\infty,\infty)$ the same way as in \eqref{eq: AM_PSHext}. As $\eta_u$ does not charge analytic sets of $Y$ (or more generally pluripolar sets), we get that 
$$I_\eta(\pi^* u)=I_\o(u), \ \  u \in \PSH_\o(X).$$
By definition, $v \in \mathcal E^1_{\eta}(Y)$ if and only if $I_{\eta}(Y) > -\infty$ and we have the following identity between $\mathcal E^1_\o(X)$ and $\mathcal E^1_\eta(Y)$:
$$\pi^*\mathcal E^1_\o(X)=\mathcal E^1_\eta(Y).$$
A similar phenomenon to the above also holds for envelopes and bounded/finite geodesics as well, as detailed later in this paper.

\subsection{Envelopes in finite energy classes}

In this subsection $(X,\o)$ is a normal K\"ahler space with a Hodge metric, i.e., $[\o]$ is integral in   $H^2(X,\Bbb R)$, and we fix a desingularization $\pi: Y \to X$ with $\eta: = \pi^* \o$.
Let us first note the following approximation result, which will be crucial for developments in this paper:
\begin{theorem}\textup{(\cite[Corollary C]{cgz})} \label{thm: EGZapprox} For any $u \in \PSH_\eta(Y)$, there exists  a decreasing sequence $\{u_j\}_j \subset \pi^* {\mathcal H}_{\o}(X) \subset \PSH_\eta(Y) \cap C^\infty$ such that $u_j \searrow u$ pointwise. 
\end{theorem}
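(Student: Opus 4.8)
The plan is to reduce the statement to a regularization problem for a single $\o$-psh function on $X$ itself, and then to use the Hodge hypothesis to transport that problem to a projective space, where homogeneity yields a \emph{decreasing} family of smooth K\"ahler potentials by a group convolution. Concretely, by the identification \eqref{eq: PSH_eqv} every $u\in\PSH_\eta(Y)$ equals $\pi^\ast v$ for a unique $v\in\PSH_\o(X)$, and $\pi^\ast\mathcal H_\o(X)\subset\PSH_\eta(Y)\cap C^\infty$; since $\pi$ is surjective, a decreasing sequence $v_j\searrow v$ pointwise on $X$ pulls back to $u_j:=\pi^\ast v_j\searrow u$ pointwise on $Y$. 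So it suffices to produce $v_j\in\mathcal H_\o(X)$ with $v_j\searrow v$, and after subtracting a constant we may assume $v\le -1$.

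Because $[\o]$ is an integral K\"ahler class, for $m$ large the linear system $|mL|$ with $c_1(L)=[\o]$ embeds $X$ into a projective space, $\iota\colon X\hookrightarrow\mathbb P^N$; after choosing the Fubini--Study metric appropriately and, if necessary, modifying $\o$ inside its cohomology class (which only shifts every $\o$-psh function by a fixed smooth potential), I may assume $\iota^\ast\o_{\mathrm{FS}}=m\,\o$ as smooth forms, so that $w:=m\,v\in\PSH_{\iota^\ast\o_{\mathrm{FS}}}(X)$. I would then invoke the extension theorem for $\o_{\mathrm{FS}}$-plurisubharmonic functions from subvarieties of $\mathbb P^N$ to obtain $W\in\PSH_{\o_{\mathrm{FS}}}(\mathbb P^N)$ with $W|_X=w$. \emph{This extension step is the one I expect to be the main obstacle}: it is a substantial pluripotential-theoretic result (of Coman--Guedj--Zeriahi type), and it is precisely the place where the Hodge hypothesis enters in an essential way.

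With $W$ at hand, I would regularize it on $\mathbb P^N$ by convolving in the matrix parameter. Lift $W$ to the logarithmically homogeneous psh function $\widehat W$ on $\CC^{N+1}\sm\{0\}$, and for a radial mollifier $\rho_\vep(A)$ on $\mathrm{Mat}_{N+1}(\CC)$ supported in a small ball about $A=0$ set
\[
\widehat W_\vep(Z):=\int \widehat W\big((I+A)Z\big)\,\rho_\vep(A)\,dA .
\]
Each $Z\mapsto\widehat W((I+A)Z)$ is psh and logarithmically homogeneous, so $\widehat W_\vep$ descends to some $W_\vep\in\PSH_{\o_{\mathrm{FS}}}(\mathbb P^N)$, and $W_\vep$ is smooth because for $Z\neq0$ the affine map $A\mapsto(I+A)Z$ is onto $\CC^{N+1}$, which turns the integral into an ordinary mollification of $\widehat W\in L^1_{\mathrm{loc}}$. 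Since $A\mapsto\widehat W((I+A)Z)$ is subharmonic on $\mathrm{Mat}_{N+1}(\CC)$, the sub--mean--value inequality gives $W_\vep\ge W$ and monotonicity in $\vep$, hence $W_\vep\searrow W$ pointwise as $\vep\downarrow0$. Passing to $\tilde W_\vep:=(1-\delta_\vep)W_\vep$ with $\delta_\vep\downarrow0$ gains strict positivity, $\o_{\mathrm{FS}}+\i\ddbar\tilde W_\vep\ge\delta_\vep\,\o_{\mathrm{FS}}>0$, and since $w\le-m$ forces $W_\vep<0$ on $X$ for $\vep$ small, the functions $\tilde W_\vep|_X$ remain decreasing in $\vep$ with pointwise limit $w$.

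Finally set $v_\vep:=\tfrac1m\,\iota^\ast\tilde W_\vep$ for small $\vep$: this is smooth on $X$ and $\o+\i\ddbar v_\vep=\tfrac1m\,\iota^\ast(\o_{\mathrm{FS}}+\i\ddbar\tilde W_\vep)\ge\delta_\vep\,\o>0$, so $v_\vep\in\mathcal H_\o(X)$, while $v_\vep\searrow\tfrac1m w=v$ pointwise on $X$; extracting $\vep=\vep_j\downarrow0$ and pulling back by $\pi$ gives the required $u_j=\pi^\ast v_{\vep_j}\in\pi^\ast\mathcal H_\o(X)$ with $u_j\searrow u$. As an alternative to the extension step one can instead regularize $v$ directly on $X$ via Bergman kernels of $mL$ together with Ohsawa--Takegoshi $L^2$-estimates, but then the $L^2$-extension estimates on the possibly singular $X$ — plus the bookkeeping needed to make the Bergman approximants simultaneously smooth, strictly $\o$-psh and monotone — become the technical heart.
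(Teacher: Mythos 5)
The paper offers no proof of this statement: it is imported verbatim as \cite[Corollary C]{cgz}, so there is nothing internal to compare against. What you have written is, in effect, a reconstruction of how Coman--Guedj--Zeriahi themselves derive that corollary from their main extension theorem, and as such it is the ``same approach'' as the source. Your reduction to regularizing a single $v\in\PSH_\o(X)$ with $u=\pi^*v$, the use of integrality to embed $X$ by $|mL|$, the convolution of the logarithmically homogeneous lift over the matrix parameter (which is legitimate: $A\mapsto\widehat W((I+A)Z)$ is psh and log-homogeneous, so the average descends, is $\geq W$ by sub-mean-value, and is monotone in $\varepsilon$), the scaling by $1-\delta_\varepsilon$ to gain strict positivity (where you correctly arrange $W_\varepsilon\leq 0$ on $X$ via $v\leq -1$ and Dini, which is needed for the scaled family to stay decreasing), and the final restriction and pullback are all sound. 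The one point to be clear about is that the step you flag as ``the main obstacle'' --- extending $w$ from the subvariety $\iota(X)\subset\Bbb P^N$ to an $\o_{\textup{FS}}$-psh function on all of $\Bbb P^N$ --- is not a side issue but is precisely the main theorem of \cite{cgz}; everything else in your argument is classical regularization on the homogeneous space $\Bbb P^N$. So your proposal is a correct derivation of the theorem \emph{from} that extension result, not an independent proof of it. Your closing remark about the alternative Bergman-kernel/Ohsawa--Takegoshi route is also apt: the paper's acknowledgments explain that the earlier approximation result of \cite{egz2}, which followed that type of strategy in the non-integral setting, had an incomplete proof, which is exactly why the author retreats to integral classes and \cite{cgz} here.
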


Recall from the previous subsection that formula \eqref{eq: AMdef} gives an extension of $I_\eta$ to $\PSH_\eta(Y)$. The following result additionally establishes the continuity of $I_\eta$ along monotonic sequences:
\begin{proposition}\textup{(\cite[Theorem 2.17]{begz})} \label{prop: AM_mon_cont} Suppose $u_j,u \in \PSH_\eta(Y)$. If $u_j$ increases a.e. or decreases to $u$, then $\lim_jI_\eta(u_j) = I_\eta(u)$. 
\end{proposition}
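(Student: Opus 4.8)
The plan is to reduce the statement to the case of uniformly bounded potentials --- where it is part of the Bedford--Taylor theory of the complex Monge--Amp\`ere operator on the smooth manifold $Y$ --- and then to bridge to unbounded potentials using monotonicity of $I_\eta$ and a truncation identity. Throughout one uses freely that $\eta$ is a smooth closed semipositive form with $V_\eta=\int_Y\eta^n>0$.

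Two elementary facts come first. \emph{Monotonicity:} $I_\eta$ is nondecreasing on $\PSH_\eta(Y)$; on bounded potentials this follows from the cocycle identity $I_\eta(v)-I_\eta(w)=\frac{1}{(n+1)V_\eta}\sum_{k=0}^{n}\int_Y (v-w)\,\eta_v^{k}\wedge\eta_w^{n-k}$ (each summand is $\geq0$ when $v\geq w$), and on all of $\PSH_\eta(Y)$ it is immediate from the infimum in \eqref{eq: AM_PSHext}. \emph{Sandwich estimate:} for $w\leq v$ in $\PSH_\eta(Y)\cap L^\infty$,
\begin{equation*}
\frac{1}{V_\eta}\int_Y (v-w)\,\eta_v^{n}\ \leq\ I_\eta(v)-I_\eta(w)\ \leq\ \frac{1}{V_\eta}\int_Y (v-w)\,\eta_w^{n},
\end{equation*}
obtained by differentiating $t\mapsto I_\eta((1-t)w+tv)$, whose derivative equals $\frac{1}{V_\eta}\int_Y (v-w)\,\eta_{(1-t)w+tv}^{n}$ and which is concave (first for smooth potentials, then by Bedford--Taylor approximation), or equivalently from the cocycle identity and an integration by parts.

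Next I would treat bounded monotone sequences. Let $u_j,u\in\PSH_\eta(Y)\cap L^\infty$ with $u_j\to u$ monotonically (a.e.\ if increasing); being squeezed between the bounded functions $u_1$ and $u$, the $u_j$ are uniformly bounded. If $u_j\searrow u$, the right inequality of the sandwich with $(v,w)=(u_j,u)$ gives $0\leq I_\eta(u_j)-I_\eta(u)\leq \frac{1}{V_\eta}\int_Y(u_j-u)\,\eta_u^{n}\to0$ by dominated convergence against the \emph{fixed} finite measure $\eta_u^{n}$. If $u_j\nearrow u$ a.e., the same inequality with $(v,w)=(u,u_j)$ gives $0\leq I_\eta(u)-I_\eta(u_j)\leq \frac{1}{V_\eta}\int_Y(u-u_j)\,\eta_{u_j}^{n}$, and one concludes by the Bedford--Taylor convergence of Monge--Amp\`ere measures along monotone sequences of bounded $\eta$-psh functions: it yields $\int_Y\psi\,\eta_{u_j}^{n}\to\int_Y\psi\,\eta_u^{n}$ for bounded quasi-continuous $\psi$, so $\int_Y(u-u_j)\,\eta_{u_j}^{n}=\int_Y u\,\eta_{u_j}^{n}-\int_Y u_j\,\eta_{u_j}^{n}\to0$ (for the second term use $u_m\leq u_j\leq u$ and let $m\to\infty$ after $j$). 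In both cases $I_\eta(u_j)\to I_\eta(u)$. I would also record the \emph{truncation identity} $I_\eta(u)=\lim_{t\to\infty}I_\eta(\max(u,-t))$ for every $u\in\PSH_\eta(Y)$: the limit is decreasing and $\geq I_\eta(u)$, while any bounded $w\geq u$ satisfies $w\geq\max(u,-t)$ once $t\geq-\inf_Y w$, so $I_\eta(w)\geq\lim_t I_\eta(\max(u,-t))$, and one then takes the infimum in \eqref{eq: AM_PSHext}.

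Finally the general case. For a decreasing sequence $u_j\searrow u$ in $\PSH_\eta(Y)$ the array $a_{j,t}:=I_\eta(\max(u_j,-t))$ is nonincreasing in $j$ and in $t$; by the bounded case $\inf_j a_{j,t}=I_\eta(\max(u,-t))$, by the truncation identity $\inf_t a_{j,t}=I_\eta(u_j)$ and $\inf_t I_\eta(\max(u,-t))=I_\eta(u)$, and since iterated infima equal the infimum over the product, $\lim_j I_\eta(u_j)=\inf_{(j,t)}a_{j,t}=I_\eta(u)$. For an increasing sequence $u_j\nearrow u$ a.e.\ one has $u_j\leq u$ (the usc regularization of $\sup_j u_j$), hence $I_\eta(u_j)\leq I_\eta(u)$; if $I_\eta(u)=-\infty$ this already forces $I_\eta(u_j)=-\infty$ for all $j$, and if $u\in\mathcal E^1_\eta(Y)$ one concludes from the fine structure of $\mathcal E^1_\eta(Y)$ established in \cite{begz} --- finiteness of $\int_Y|u|\,\eta_u^{n}$ and $\int_Y(\max(u,-t)-u)\,\eta_{\max(u,-t)}^{n}\to0$ --- together with the monotone convergence of the mixed measures in the cocycle identity. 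This last sub-case is the main obstacle: for decreasing sequences the truncation reduction is purely formal, but truncating an increasing sequence at level $-t$ only produces \emph{upper} bounds for $I_\eta(u_j)$, so the matching lower bound cannot come from monotone comparison and must be extracted from the structure of the energy class --- which is the substance of \cite[Theorem 2.17]{begz}.
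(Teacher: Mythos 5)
Your treatment of the decreasing case is complete and correct: the sandwich estimate, the bounded case via Bedford--Taylor, the truncation identity $I_\eta(u)=\lim_t I_\eta(\max(u,-t))$, and the interchange of the two monotone infima together prove $\lim_j I_\eta(u_j)=I_\eta(u)$ for \emph{any} decreasing sequence in $\PSH_\eta(Y)$, with no finiteness assumption. Since the paper offers no proof of this proposition beyond the citation of \cite[Theorem 2.17]{begz}, there is nothing on the paper's side to compare against; your reduction is the natural one.

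The genuine gap is the increasing case, and it is worse than you indicate: the fallback hypothesis you propose, $u\in\mathcal E^1_\eta(Y)$, does not suffice, because the statement is false as written unless one also assumes $u_j\in\mathcal E^1_\eta(Y)$. Indeed (already for $(Y,\eta)$ K\"ahler), pick $v\in\PSH_\eta(Y)$, $v\le 0$, with full non-pluripolar mass but $\int_Y(-v)\,\eta_v^n=+\infty$, i.e.\ $v\in\mathcal E\setminus\mathcal E^1$. Then $u_j:=v/j$ increases a.e.\ to $u=0$, each $u_j$ has full mass, and by multilinearity of the non-pluripolar product $\eta_{u_j}^n\ge j^{-n}\eta_v^n$, whence $\int_Y(-u_j)\,\eta_{u_j}^n\ge j^{-n-1}\int_Y(-v)\,\eta_v^n=+\infty$; by Theorem \ref{thm: E1eqvdef}(ii) this forces $I_\eta(u_j)=-\infty$ for every $j$, while $I_\eta(u)=0$. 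So no amount of information about $\int_Y|u|\eta_u^n$ or about the truncations of $u$ can produce the missing lower bound on $I_\eta(u_j)$. The correct statement --- and the only one used later in the paper, e.g.\ in Lemma \ref{lemma: d_1monotonone_conv} and Proposition \ref{prop: E1complete} --- carries the extra hypothesis $u_j\in\mathcal E^1_\eta(Y)$ (equivalently $u_1\in\mathcal E^1_\eta(Y)$, by monotonicity of $I_\eta$). Under that hypothesis your truncation scheme does close up: one needs $\sup_j\big(I_\eta(\max(u_j,-t))-I_\eta(u_j)\big)\to 0$ as $t\to\infty$, which by the sandwich estimate reduces to $\sup_j\int_{\{u_1<-t\}}(-u_1)\,\eta_{u_j}^n\to 0$, and this uniform integrability over all potentials lying above the fixed finite-energy potential $u_1$ is exactly what the fundamental inequality for finite-energy classes in \cite{gz1,begz} provides; the interchange of limits then proceeds as in your decreasing case. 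You should therefore either add $u_j\in\mathcal E^1_\eta(Y)$ to the increasing half of the statement or drop that half; as written it cannot be proved because it is not true.
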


Given $u \in \PSH_\eta(Y)$, by \cite{begz} one can introduce the non-pluripolar complex Monge-Ampere measure $\eta_u^n$ as a limit of an increasing sequence of measures:
$$\eta_u^n = \lim_{j \to \infty} \mathbbm{1}_{\{u> -j\}} \eta_{\max(u,-j)}^n,$$
and the inequality $\int_Y \eta_u^n \leq \int_Y \eta^n$ holds for the total measure. As we recall now, the are multiple ways to characterize membership in $\mathcal E^1_\eta(Y)$:
\begin{theorem}\textup{\cite[Section 2.2]{begz}} \label{thm: E1eqvdef} Suppose $u \in \PSH_\eta(Y)$. Then the following are equivalent:\\
\noindent(i) $u \in \mathcal E^1_\eta(Y),$ i.e., $I_\eta(u) > -\infty.$\\
\noindent (ii) $\int_Y \eta_u^n = \int_Y \eta^n$ and $\int_Y |u| \eta_u^n < \infty$. \\
\noindent (iii) For all $u_j \in \PSH_\eta(Y) \cap L^\infty$ decreasing to $u$ we have $\limsup_j \int_Y |u_j| \eta_{u_j}^n < \infty$.\\
(iv) For all $u_j \in \PSH_\eta(Y) \cap L^\infty$ decreasing to $u$ we have $\lim_j I_\eta(u_j) > -\infty$.\\
\noindent (v) For some $u_j \in \PSH_\eta(Y) \cap L^\infty$ decreasing to $u$ we have $\limsup_j \int_Y |u_j| \eta_{u_j}^n < \infty$.\\
\noindent (vi) For some $u_j \in \PSH_\eta(Y) \cap L^\infty$ decreasing to $u$ we have $\lim_j I_\eta(u_j) > -\infty$.\\
\end{theorem}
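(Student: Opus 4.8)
The plan is to derive all six equivalences from a single two‑sided ``energy versus mass'' estimate, after a trivial normalization. First I would replace $u$ by $u-\sup_Y u$ so that $u\le 0$ --- this affects each of (i)--(vi) only by additive multiples of $V_\eta$, hence is harmless --- and fix the canonical decreasing approximants $u_j:=\max(u,-j)\in\PSH_\eta(Y)\cap L^\infty$, which satisfy $-j\le u_j\le 0$ and $u_j\searrow u$. By Proposition \ref{prop: AM_mon_cont}, $I_\eta(v_j)\to I_\eta(u)$ for \emph{every} decreasing sequence $v_j\searrow u$ in $\PSH_\eta(Y)\cap L^\infty$, not just for $u_j$; in particular $\lim_j I_\eta(v_j)$ does not depend on the sequence, and since such sequences exist, (i), (iv), (vi) are tautologically equivalent. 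The real content is thus (i)$\Leftrightarrow$(ii) and (i)$\Leftrightarrow$(iii)$\Leftrightarrow$(v).

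The main estimate I would prove is: for every $v\in\PSH_\eta(Y)\cap L^\infty$ with $v\le 0$,
\begin{equation*}
-V_\eta\, I_\eta(v)\ \le\ \int_Y(-v)\,\eta_v^n\ \le\ -(n+1)\,V_\eta\, I_\eta(v).
\end{equation*}
Since $\eta\ge 0$, we have $0\in\PSH_\eta(Y)$, and comparing $v$ with $0$ gives $-(n+1)V_\eta I_\eta(v)=\sum_{j=0}^n\int_Y(-v)\,\eta_v^j\wedge\eta^{n-j}$. A Bedford--Taylor integration by parts on the compact manifold $Y$ shows that consecutive summands differ by $\int_Y\i\,\del(-v)\wedge\dbar(-v)\wedge\eta_v^j\wedge\eta^{n-j-1}\ge 0$, so the summands $a_j:=\int_Y(-v)\eta_v^j\wedge\eta^{n-j}$ are nonnegative and nondecreasing in $j$ with $a_n=\int_Y(-v)\eta_v^n$; the displayed bounds are then just $a_n\le\sum_j a_j\le(n+1)a_n$. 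Feeding $v=v_j$ into this estimate and using $I_\eta(v_j)\ge I_\eta(u)$ (monotonicity of $I_\eta$) together with $I_\eta(v_j)\to I_\eta(u)$, I would read off at once that $I_\eta(u)>-\infty$ holds if and only if $\limsup_j\int_Y(-v_j)\eta_{v_j}^n<\infty$ for some, equivalently every, decreasing bounded $v_j\searrow u$ (for the ``(i)$\Rightarrow$(iii)'' part one first subtracts a constant so that $v_j\le 0$, which only shifts the quantities involved by bounded amounts); this is (i)$\Leftrightarrow$(iii)$\Leftrightarrow$(v). It also gives, once (i) holds, the uniform bound $\int_Y(-u_j)\eta_{u_j}^n\le M:=-(n+1)V_\eta I_\eta(u)$.

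For (i)$\Leftrightarrow$(ii) I would additionally use the plurifine locality of the non‑pluripolar Monge--Amp\`ere operator: since $u=u_j$ on the plurifine‑open set $\{u>-j\}$, one has $\mathbbm{1}_{\{u>-j\}}\eta_{u_j}^n=\mathbbm{1}_{\{u>-j\}}\eta_u^n$. Granting (i): as $u_j\equiv-j$ on $\{u\le -j\}$ and $\int_Y\eta_{u_j}^n=V_\eta$, we get $\eta_{u_j}^n(\{u\le -j\})\le\tfrac1j\int_Y(-u_j)\eta_{u_j}^n\le M/j$, whence by locality $\eta_u^n(\{u>-j\})\ge V_\eta-M/j$; letting $j\to\infty$ and using that $\eta_u^n$ does not charge the pluripolar set $\{u=-\infty\}$ gives $\int_Y\eta_u^n=V_\eta$, and monotone convergence along $\mathbbm{1}_{\{u>-j\}}\eta_{u_j}^n\nearrow\eta_u^n$ gives $\int_Y(-u)\,\eta_u^n=\lim_j\int_{\{u>-j\}}(-u_j)\eta_{u_j}^n\le M<\infty$, i.e. (ii). Conversely, granting (ii): locality together with $\int_Y\eta_u^n=V_\eta$ yields $\int_Y(-u_j)\eta_{u_j}^n=\int_{\{u>-j\}}(-u)\eta_u^n+j\,\eta_u^n(\{u\le -j\})\le\int_Y(-u)\eta_u^n<\infty$ uniformly in $j$, so (v) holds with $v_j=u_j$, hence (i).

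I expect the (i)$\Leftrightarrow$(ii) step to be the real obstacle: it is the only place where one must actually control the Monge--Amp\`ere mass carried by the sublevel sets $\{u\le -j\}$, and doing so genuinely requires combining the full‑mass property with the plurifine‑locality identity $\mathbbm{1}_{\{u>-j\}}\eta_{u_j}^n=\mathbbm{1}_{\{u>-j\}}\eta_u^n$ --- neither ingredient alone suffices --- together with the fact that non‑pluripolar Monge--Amp\`ere measures put no mass on pluripolar sets. By contrast, the two‑sided energy/mass inequality, which carries the rest of the argument, relies only on the standard Bedford--Taylor calculus for bounded potentials on the compact K\"ahler manifold $Y$. (Since the statement is quoted from \cite{begz}, one could of course just cite it; the above is the self‑contained route I would take.)
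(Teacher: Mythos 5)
The paper offers no proof of this statement at all: it is imported verbatim from \cite[Section 2.2]{begz} (ultimately going back to Guedj--Zeriahi), so there is no in-paper argument to compare against. Your self-contained write-up is correct, and it is in substance the standard argument from that literature: (i)$\Leftrightarrow$(iv)$\Leftrightarrow$(vi) falls out of Proposition \ref{prop: AM_mon_cont}; the remaining equivalences are carried by the two-sided comparison $a_n\le\sum_{j}a_j\le(n+1)a_n$ for the mixed terms $a_j=\int_Y(-v)\,\eta_v^{j}\wedge\eta^{n-j}$, whose monotonicity in $j$ follows from one Bedford--Taylor integration by parts; and the full-mass statement in (ii) is extracted from the plurifine locality identity $\mathbbm{1}_{\{u>-j\}}\eta_{u_j}^n=\mathbbm{1}_{\{u>-j\}}\eta_u^n$ together with the Chebyshev bound $\eta_{u_j}^n(\{u\le-j\})\le j^{-1}\int_Y(-u_j)\eta_{u_j}^n$. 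What your route buys is transparency about exactly which inputs are needed in the degenerate setting: every step uses only Bedford--Taylor calculus for \emph{bounded} potentials on the compact manifold $Y$, and nowhere do you need $\eta>0$, only $\eta\ge0$ and $\int_Y\eta^n>0$ --- which is precisely why the statement survives the passage from $\omega$ on $X$ to $\eta=\pi^*\omega$ on $Y$. Two caveats, neither fatal: first, your argument is self-contained only modulo Proposition \ref{prop: AM_mon_cont}, which this paper also merely cites from \cite[Theorem 2.17]{begz}; in the source the two results are proved in tandem, so a genuinely independent treatment would have to first establish monotone continuity of $I_\eta$ (doable by the same integration-by-parts estimates, but it is an extra step you are currently borrowing). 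Second, in the step $\eta_{u_j}^n(\{u\le-j\})\le M/j$ you implicitly use $\int_Y\eta_{u_j}^n=V_\eta$ for bounded potentials; this does hold here (Stokes, since $\eta$ is a smooth global form), but it is worth flagging since it is the one place where total masses must be exactly, not just at most, $V_\eta$.
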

Let us recall the following crucial regularity result of Berman--Demailly that applies in our general setting. As we are dealing with integral K\"ahler classes, we could also use the results of \cite{brm3}. Note that   by a theorem of Demailly--P\u aun \cite[Theorem 0.5]{dp} as $\int_Y \eta^n >0$, the class $[\eta]$ is big. 
\begin{theorem}\textup{(\cite[Theorem 1.4]{bd},\cite[Theorem 1.1]{brm3})} For any $h \in C^\infty(Y)$ we have that $\Delta^\eta P_\eta(h)$ is locally bounded in $Y\setminus \pi^{-1}(X_{sing})$, where
$$P_\eta(h)= \sup\{v \in \PSH_\eta(Y), \ \textup{s.t. } v \leq h \}.$$
\end{theorem}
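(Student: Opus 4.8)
The statement is a (by now classical) regularity result for psh envelopes, and I would recover it via a penalization scheme together with an Aubin--Yau second order estimate that is \emph{uniform} in the penalization parameter. First a reduction: since $\pi$ is biholomorphic over $X_{reg}$ and $\o$ is a genuine K\"ahler metric there, on $U:=Y\sm\pi^{-1}(X_{sing})$ the form $\eta=\pi^*\o$ is an honest smooth K\"ahler metric, so $\Delta^\eta$ is the Laplacian of a K\"ahler metric there and, on any $K\Subset U$, is uniformly comparable to the Laplacian of the fixed global K\"ahler metric $\eta'$; moreover $\eta+i\ddbar P_\eta(h)\ge 0$ gives $\Delta^\eta P_\eta(h)\ge -n$ for free. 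Hence it suffices to prove that for every $K\Subset U$ one has $\eta+i\ddbar P_\eta(h)\le C_K\,\eta'$ as positive $(1,1)$-currents on $K$.

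Next I would approximate $(Y,\eta)$ by the genuine K\"ahler structures $(Y,\eta_\veps)$, $\eta_\veps:=\eta+\veps\eta'$, as in the Preliminaries. For $\veps\in(0,1]$ and $\beta\ge 1$ let $u_{\veps,\beta}\in\PSH_{\eta_\veps}(Y)\cap C^\infty(Y)$ be the unique solution of the penalized complex \MA equation
$$(\eta_\veps+i\ddbar u_{\veps,\beta})^n=e^{\beta(u_{\veps,\beta}-h)}\,\eta_\veps^n,$$
which exists by the Aubin--Yau theorem and the continuity method in $\beta$ (the linearization $\Delta_{(\eta_\veps)_u}-\beta$ is invertible and Yau's estimates close the argument). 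Evaluating this equation at the extrema of $u_{\veps,\beta}$ gives at once $\|u_{\veps,\beta}\|_{L^\infty(Y)}\le\|h\|_{L^\infty(Y)}$, uniformly in $\veps,\beta$; standard comparison/balayage arguments (cf. \cite{bd}) give $u_{\veps,\beta}\to P_{\eta_\veps}(h)$ uniformly as $\beta\to\infty$; and $\veps\mapsto P_{\eta_\veps}(h)$ decreases as $\veps\to 0$ to an $\eta$-psh function that is $\le h$, hence to $P_\eta(h)$.

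The heart of the proof is a Laplacian estimate uniform in $\veps$ and in $\beta$ large: for every $K\Subset U$ there is $C_K$ with $\operatorname{tr}_{\eta_\veps}(\eta_\veps+i\ddbar u_{\veps,\beta})\le C_K$ on $K$. This is the classical Aubin--Yau computation carried out with $\eta_\veps$ \emph{itself} as the reference metric and localized by a fixed cutoff $\chi$ with $\chi\equiv 1$ near $K$ and $\operatorname{supp}\chi\Subset U$: the point is that on the fixed compact set $\operatorname{supp}\chi$ one has $\eta_\veps\to\eta$ in $C^\infty$, so the bisectional curvature lower bound $-B(K)$ entering Yau's inequality
$$\Delta_{(\eta_\veps)_u}\log\operatorname{tr}_{\eta_\veps}(\eta_\veps)_u\ \ge\ \frac{\Delta_{\eta_\veps}\log\bigl((\eta_\veps)_u^n/\eta_\veps^n\bigr)}{\operatorname{tr}_{\eta_\veps}(\eta_\veps)_u}-B\operatorname{tr}_{(\eta_\veps)_u}\eta_\veps$$
is $\veps$-independent, and the (unavailable) global maximum principle is replaced by the analysis of a maximum of $\chi\log\operatorname{tr}_{\eta_\veps}(\eta_\veps+i\ddbar u_{\veps,\beta})-Au_{\veps,\beta}$, necessarily attained inside $\operatorname{supp}\chi$. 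Differentiating the equation gives $\Delta_{\eta_\veps}\log\bigl((\eta_\veps)_u^n/\eta_\veps^n\bigr)=\beta\operatorname{tr}_{\eta_\veps}(\eta_\veps)_u-\beta n-\beta\Delta_{\eta_\veps}h$, whose leading term $+\beta\operatorname{tr}_{\eta_\veps}(\eta_\veps)_u$ carries the favorable sign; plugging this into Yau's inequality at the maximum (the $\operatorname{tr}_{(\eta_\veps)_u}\eta_\veps$ contributions cancelling favorably once $A$ is chosen large, the cutoff gradient terms controlled in the usual way) forces, once $\beta$ is large, $\operatorname{tr}_{\eta_\veps}(\eta_\veps+i\ddbar u_{\veps,\beta})$ to be bounded at that point by a constant depending only on $n$, $\|h\|_{C^2}$, $\chi$ and the geometry of $\eta$ on $\operatorname{supp}\chi$, but \emph{not} on $\beta$; the uniform $L^\infty$ bound on $u_{\veps,\beta}$ then propagates this to all of $K$. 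Finally, letting $\beta\to\infty$ and then $\veps\to 0$, the currents $\eta_\veps+i\ddbar u_{\veps,\beta}$ converge weakly on $U$ to $\eta+i\ddbar P_\eta(h)$, the bound passes to the limit, and comparing $\eta_\veps$, $\eta$ and $\eta'$ on $K$ yields $\eta+i\ddbar P_\eta(h)\le C_K\eta'$ on $K$; by the first paragraph this is what was needed.

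The crux, and the step I expect to be the main obstacle, is precisely this uniform second order estimate: one must keep every constant independent of the penalization parameter $\beta$ (which succeeds only because the $+\beta\operatorname{tr}_{\eta_\veps}(\eta_\veps+i\ddbar u_{\veps,\beta})$ term dominates the $\beta$-dependent error $\beta\Delta_{\eta_\veps}h$), while simultaneously accommodating the degeneration of $\eta$ as $\veps\to 0$ by localizing to a compact subset of $U$ on which the reference metrics $\eta_\veps$ retain uniformly bounded geometry, rather than attempting a global estimate. Since here $[\eta]$ is integral, one could instead run the Bergman kernel proof of \cite{brm3}: writing $[\eta]=c_1(\pi^*L)$ for a line bundle $L$, the normalized logarithms $\fracn k\log\rho_k$ of the Bergman kernels of the $e^{-kh}$-weighted spaces $H^0(Y,\pi^*L^{\otimes k})$ decrease to $P_\eta(h)$, and local Bergman kernel expansions provide a $k$-uniform bound on $\Delta^\eta\bigl(\fracn k\log\rho_k\bigr)$ over $U$.
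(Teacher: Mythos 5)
The paper does not prove this statement: it is imported as a black box from Berman--Demailly \cite{bd} (and, for integral classes, from Berman \cite{brm3}), and is only used as an ingredient in Proposition 3.5. Your reconstruction is essentially the Berman--Demailly argument --- the exponential penalization $(\eta_\veps+i\ddbar u_{\veps,\beta})^n=e^{\beta(u_{\veps,\beta}-h)}\eta_\veps^n$, a $\beta$- and $\veps$-uniform localized Aubin--Yau Laplacian estimate on compact subsets where the reference forms $\eta_\veps$ have uniformly bounded geometry, and passage to the limit --- while your closing Bergman-kernel remark is precisely the content of the second cited reference; the steps you leave implicit (making the localized test quantity positive before applying the maximum principle, absorbing the $\nabla\chi$ terms) are routine and handled in the standard way, so the sketch is correct.
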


In its original form this theorem contains more precise information about $\Delta^\eta P_\eta(h)$, but for the purposes of proving the result below, the above statement will suffice.  We now generalize the ideas of \cite[Section 2.2]{da1}:

\begin{proposition}\label{prop: MA_form} Given $u_0,u_1 \in \mathcal \pi^*\mathcal H_\o(X)$ such that $\{u_0-u_1 =0 \} \subset Y$ contains no critical points of $u_1 - u_0$. We introduce $\Lambda_{u_0} = \{ P_\eta(u_0,u_1)=u_0\}$ and  $\Lambda_{u_1} = \{ P_\eta(u_0,u_1)=u_1\}$. Then the following partition formula holds for the complex Monge--Amp\`ere measure of $P_\eta(u_0,u_1)$:
\begin{equation}\label{MA_partition}
\eta_{P_\eta(u_0,u_1)}^n= \mathbbm{1}_{\Lambda_{u_0}}\eta_{u_0}^n + \mathbbm{1}_{\Lambda_{u_1}}\eta_{u_1}^n.
\end{equation}
\end{proposition}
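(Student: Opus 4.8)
The plan is to reduce the statement to the corresponding result on a Kähler manifold, namely \cite[Section 2.2]{da1} or its standard generalizations, by using the approximation $\eta + \varepsilon\eta'$ and then letting $\varepsilon \searrow 0$. First I would fix $\varepsilon > 0$ and work with the genuine Kähler form $\eta_\varepsilon := \eta + \varepsilon\eta'$ on $Y$. Since $u_0, u_1 \in \pi^*\mathcal H_\o(X) \subset \PSH_{\eta}(Y) \cap C^\infty \subset \mathcal H_{\eta_\varepsilon}(Y)$, the pair $(Y,\eta_\varepsilon)$ is an honest compact Kähler manifold and $u_0,u_1$ are smooth $\eta_\varepsilon$-Kähler potentials. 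The hypothesis that the contact set $\{u_0 = u_1\}$ contains no critical point of $u_1 - u_0$ is exactly the non-degeneracy condition under which the argument of \cite{da1} (relying on the Berman--Demailly $C^{1,1}$-type regularity of $P_{\eta_\varepsilon}(h)$, which holds everywhere on $Y$ in the Kähler case) produces the partition formula
\begin{equation}\label{eq: eps_partition}
(\eta_\varepsilon)_{P_{\eta_\varepsilon}(u_0,u_1)}^n = \mathbbm{1}_{\Lambda^\varepsilon_{u_0}} (\eta_\varepsilon)_{u_0}^n + \mathbbm{1}_{\Lambda^\varepsilon_{u_1}} (\eta_\varepsilon)_{u_1}^n,
\end{equation}
where $\Lambda^\varepsilon_{u_i} = \{P_{\eta_\varepsilon}(u_0,u_1) = u_i\}$. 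The point is that on the (open) non-contact set, $P_{\eta_\varepsilon}(u_0,u_1)$ is maximal, so its Monge--Ampère measure vanishes there; on the interior of each $\Lambda^\varepsilon_{u_i}$ the envelope agrees with $u_i$, hence so do the Hessians; and the common boundary has Lebesgue measure zero because near it the smooth function $u_1 - u_0$ has nonvanishing gradient, so $\{u_0 = u_1\}$ is a smooth hypersurface, which carries no mass for the $C^{1,1}$ form $(\eta_\varepsilon)^n_{P_{\eta_\varepsilon}(u_0,u_1)}$ (this uses that the density is an $L^\infty$ function).

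Next I would pass to the limit $\varepsilon \searrow 0$. As $\varepsilon$ decreases, $P_{\eta_\varepsilon}(u_0,u_1)$ decreases (since $\PSH_\eta(Y) \subset \PSH_{\eta_\varepsilon}(Y)$ and the feasible set shrinks); denote the decreasing limit by $P_\eta(u_0,u_1)$, which is the correct rooftop envelope by definition. By Proposition \ref{prop: AM_mon_cont} (or rather its local version, the convergence of non-pluripolar Monge--Ampère measures along monotone sequences of uniformly bounded potentials, as in \cite{begz}), we have $(\eta_\varepsilon)_{P_{\eta_\varepsilon}(u_0,u_1)}^n \to \eta_{P_\eta(u_0,u_1)}^n$ weakly, and similarly $(\eta_\varepsilon)_{u_i}^n \to \eta_{u_i}^n$. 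For the right-hand side of \eqref{eq: eps_partition} I need to control the indicator functions: one checks $\Lambda^\varepsilon_{u_0} \supset \Lambda^{\varepsilon'}_{u_0}$ for $\varepsilon < \varepsilon'$ wait — more carefully, since $P_{\eta_\varepsilon}$ decreases to $P_\eta$ and is $\le u_0$, the sets $\{P_{\eta_\varepsilon}(u_0,u_1) = u_0\}$ form an increasing family whose union is (up to the measure-zero boundary) $\{P_\eta(u_0,u_1) = u_0\} = \Lambda_{u_0}$; an analogous statement holds for $\Lambda_{u_1}$. Combining monotone convergence of the measures with this set convergence yields $\mathbbm{1}_{\Lambda^\varepsilon_{u_i}}(\eta_\varepsilon)_{u_i}^n \to \mathbbm{1}_{\Lambda_{u_i}}\eta_{u_i}^n$ weakly, and \eqref{eq: eps_partition} passes to the limit to give \eqref{MA_partition}.

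The main obstacle I anticipate is the bookkeeping in the limit $\varepsilon \searrow 0$ on the right-hand side: one must be sure that no Monge--Ampère mass escapes onto the boundary hypersurface $\{u_0 = u_1\}$ in the limit, and that the increasing sets $\Lambda^\varepsilon_{u_i}$ genuinely exhaust $\Lambda_{u_i}$ rather than a strictly smaller set. The first issue is handled by the non-critical-point hypothesis — $\{u_0=u_1\}$ is a smooth hypersurface near which the gradient of $u_1-u_0$ is bounded below, so it is null for every $\eta_{u_i}^n$ (these are absolutely continuous with $L^\infty$ density in a neighborhood, as $u_i$ is smooth) and also null for $\eta^n_{P_\eta(u_0,u_1)}$ by the Berman--Demailly regularity on $Y \setminus \pi^{-1}(X_{sing})$ together with the fact, used already in \cite{da1}, that the envelope is $C^{1,1}$ up to the free boundary. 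The second issue reduces to: if $P_\eta(u_0,u_1)(x) < u_0(x)$ at an interior point of the non-contact set, then for $\varepsilon$ small $P_{\eta_\varepsilon}(u_0,u_1)(x) < u_0(x)$ as well — this follows from $P_{\eta_\varepsilon} \searrow P_\eta$ and openness. The remaining steps (vanishing of the Monge--Ampère measure of the envelope on the open non-contact set, agreement of Hessians on the interior of each contact set) are local and carry over verbatim from the smooth Kähler case to $Y \setminus \pi^{-1}(X_{sing})$, which suffices since all the measures involved put no mass on the analytic set $\pi^{-1}(X_{sing})$.
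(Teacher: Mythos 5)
Your proof is essentially correct, but it takes a genuinely different route from the paper. The paper works directly with the degenerate form $\eta$ on $Y$: it first uses \cite[Proposition 4.5]{dr1} to find a neighborhood $U$ of the hypersurface $\{u_0=u_1\}$ contained in $\{P_\eta(u_0,u_1)<\min(u_0,u_1)\}$, replaces the double obstacle $\min(u_0,u_1)$ by a single smooth obstacle $v$ agreeing with it off $U$, invokes the Berman--Demailly regularity theorem for big classes to get a locally bounded Laplacian for $P_\eta(u_0,u_1)$ on $Y\setminus\pi^{-1}(X_{sing})$, and then concludes via \cite[Corollary 9.2]{bt} (concentration of the Monge--Amp\`ere measure on the contact set) and \cite[Lemma 7.7]{gt} (a.e.\ agreement of second derivatives on the contact set), discarding $\pi^{-1}(X_{sing})$ as a pluripolar set. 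You instead quote the finished K\"ahler-case formula for each $\eta_\varepsilon=\eta+\varepsilon\eta'$ and pass to the limit; this is consistent with the approximation philosophy the paper uses elsewhere (Proposition \ref{prop: lotsoflimits}) and does go through, since $(\eta_\varepsilon)^n_{P_{\eta_\varepsilon}(u_0,u_1)}\to\eta^n_{P_\eta(u_0,u_1)}$ weakly by Bedford--Taylor and the right-hand side converges by dominated convergence against the smooth measures $(\eta_\varepsilon)^n_{u_i}$. What your route buys is that you never need the big-class version of Berman--Demailly nor the single-obstacle reduction in the degenerate setting; what the paper's route buys is a self-contained argument with no limit bookkeeping, which it then reuses verbatim in Theorem \ref{thm:rooftopenergystable}.

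Two small corrections to your bookkeeping. First, the monotonicity of the contact sets is the opposite of what you wrote: since $P_\eta\le P_{\eta_{\varepsilon'}}\le P_{\eta_\varepsilon}\le u_i$ for $\varepsilon'<\varepsilon$, the sets $\Lambda^\varepsilon_{u_i}$ \emph{decrease} as $\varepsilon\searrow 0$ and their \emph{intersection} is exactly $\Lambda_{u_i}$ (the inclusion $\bigcap_\varepsilon\Lambda^\varepsilon_{u_i}\subseteq\Lambda_{u_i}$ is precisely the pointwise convergence $P_{\eta_\varepsilon}(x)\searrow P_\eta(x)$ that you verify at the end). This is in fact the favorable direction: $\mathbbm{1}_{\Lambda^\varepsilon_{u_i}}\searrow\mathbbm{1}_{\Lambda_{u_i}}$ pointwise everywhere, and dominated convergence applies with no "escape of mass" issue. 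Second, in recounting the K\"ahler-case argument you should not restrict to the \emph{interior} of the contact set: the free boundary $\partial\Lambda^\varepsilon_{u_i}$ is not the hypersurface $\{u_0=u_1\}$ and has no a priori reason to be Lebesgue-null; the correct statement, used in \cite{da1} and in the paper, is that the second derivatives of two $W^{2,p}_{loc}$ functions agree a.e.\ on their full coincidence set by \cite[Lemma 7.7]{gt}. Neither point invalidates your argument, but both need to be stated correctly for the limit to be justified.
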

\begin{proof}
As a consequence of \cite[Proposition 4.5]{dr1} there exists an open neighborhood $U$ containing the smooth hypersurface $\{u_0 - u_1=0 \}$, such that $U \subset \{ P_\eta(u_0,u_1) < \min\{ u_0,u_1\}\}$. Consequently, one can construct $v \in C^\infty(Y)$ such that $\min\{u_0,u_1 \} \geq v \geq P_\eta(u_0,u_1)$ and $v = \min \{v_0,v_1 \}$ on $Y\setminus U$, yielding that $P_\eta(v_0,v_1)=P_\eta(v)$. Hence we can apply the above theorem to conclude that $\Delta^\eta P_\eta(u_0,u_1)=\Delta^\eta P_\eta(v)$ is locally bounded in $Y \setminus \pi^{-1}(X_{sing})$. 

From \cite[Corollary 9.2]{bt} it then follows that $\eta_{P_\eta(u_0,u_1)}^n$ is concentrated on the coincidence set $\Lambda_{u_0} \cup \Lambda_{u_1}.$ Having bounded Laplacian on $Y \setminus \pi^{-1}(X_{sing})$ implies that all second order partials of $P_\eta(u_0,u_1)$ are in any $L^p_{loc}(Y\setminus \pi^{-1}(X_{sing})), \ p <\infty$. It follows from \cite[Chapter 7, Lemma 7.7]{gt} that on $\Lambda_{u_0} \setminus \pi^{-1}(X_{sing})$ all the second order partials of $P_\eta(u_0,u_1)$ and $u_0$ agree, and an analogous statement holds on $\Lambda_{u_1} \setminus \pi^{-1}(X_{sing})$. 
As $\pi^{-1}(X_{sing}) \subset Y$ is an analytic set, we have $\int_{\pi^{-1}(X_{sing})} \eta^n_{P_\eta(v_0,v_1)}=0$. Putting everything together, using \cite[Proposition 2.1.6]{bl1}  one can write:
\begin{flalign*}
\eta_{P_\eta(u_0,u_1)}^n&=\mathbbm{1}_{\Lambda_{u_0} \cup \Lambda_{u_1}}\eta_{P_\eta(u_0,u_1)}^n =\mathbbm{1}_{\Lambda_{u_0}}\eta_{u_0}^n + \mathbbm{1}_{\Lambda_{u_1} \setminus \Lambda_{u_0}}\eta_{u_1}^n.
\end{flalign*}
As we argued in the beginning of the proof, $\{u_1 - u_0=0 \} \subset \{P_\eta(u_0,u_1) < \min \{ u_0,u_1\} \} $ hence ${\Lambda_{u_1} \setminus \Lambda_{u_0}}=\Lambda_{u_1}$, finishing the proof.
\end{proof}

The following theorem now partially generalizes \cite[Theorem 2]{da1} from the smooth K\"ahler case:

\begin{theorem} \label{thm:rooftopenergystable}Suppose $u_0,u_1 \in \mathcal E^1_\eta(Y)$. Then $P_\eta(u_0,u_1) \in \mathcal E^1_\eta(Y)$. 
\end{theorem}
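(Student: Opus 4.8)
The plan is to reduce to the smooth-class argument of \cite[Theorem 2]{da1} by the approximation trick used throughout the paper: work on the desingularization $Y$, perturb the degenerate class $[\eta]$ to the genuine Kähler classes $[\eta+\veps\eta']$, and pass to the limit $\veps \to 0$. First, since $u_0,u_1 \in \mathcal E^1_\eta(Y)$, by Theorem \ref{thm: E1eqvdef} (and the definition of the energy class) we may pick decreasing sequences of bounded potentials approaching them; using Theorem \ref{thm: EGZapprox} we can in fact take these to be $\{u_0^j\}_j, \{u_1^j\}_j \subset \pi^*\mathcal H_\o(X)$ with $u_0^j \searrow u_0$, $u_1^j \searrow u_1$. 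The envelopes $P_\eta(u_0^j,u_1^j)$ are then a decreasing sequence in $\PSH_\eta(Y)\cap L^\infty$ decreasing to $P_\eta(u_0,u_1)$, so by Theorem \ref{thm: E1eqvdef}(iv)/(vi) it suffices to establish a uniform lower bound $I_\eta(P_\eta(u_0^j,u_1^j)) \geq -C$ independent of $j$.

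To get this uniform bound I would run the classical energy-estimate computation, the point being that Proposition \ref{prop: MA_form} gives us exactly the partition formula $\eta_{P_\eta(u_0,u_1)}^n = \mathbbm 1_{\Lambda_{u_0}}\eta_{u_0}^n + \mathbbm 1_{\Lambda_{u_1}}\eta_{u_1}^n$ needed to control the Monge--Ampère mass of the envelope. Concretely: for smooth $u_0,u_1$ whose difference has no critical point on $\{u_0=u_1\}$, one differentiates $t \mapsto I_\eta(P_\eta(u_0, tu_1 + (1-t)u_0))$ — or more robustly uses the cocycle/monotonicity identities for $I_\eta$ together with the partition formula — to show
\[
I_\eta(u_0) - I_\eta(P_\eta(u_0,u_1)) \le \frac{1}{V_\eta}\int_Y (u_0 - P_\eta(u_0,u_1))\, \eta_{u_0}^n,
\]
and symmetrically with $u_1$; combining yields $I_\eta(P_\eta(u_0,u_1)) \ge \tfrac12\big(I_\eta(u_0)+I_\eta(u_1)\big) - \tfrac{1}{2V_\eta}\int_Y|u_0-u_1|(\eta_{u_0}^n+\eta_{u_1}^n)$, say. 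Since $u_0^j \le u_0^1$, $u_1^j \le u_1^1$ and $P_\eta(u_0^j,u_1^j) \ge P_\eta(u_0,u_1) \ge$ some fixed potential, the right-hand side stays bounded below along $j$ (using monotonicity of $I_\eta$ and that $\int_Y|u_0^j|\eta_{u_0^j}^n$, $\int_Y|u_1^j|\eta_{u_1^j}^n$ are bounded, which is part of $u_0,u_1 \in \mathcal E^1$ via Theorem \ref{thm: E1eqvdef}(ii)-(iii)). Then Proposition \ref{prop: AM_mon_cont} lets us pass to the limit and conclude $I_\eta(P_\eta(u_0,u_1)) > -\infty$.

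There is one technical gap to close before Proposition \ref{prop: MA_form} applies: it requires $\{u_0^j - u_1^j = 0\}$ to contain no critical point of $u_1^j - u_0^j$, which is not automatic. The standard fix is a Sard-type perturbation — replace $u_1^j$ by $u_1^j + c$ for a generic small constant $c$ (or perturb $u_1^j$ slightly inside $\pi^*\mathcal H_\o(X)$) so that $0$ is a regular value of $u_1^j - u_0^j$ on $Y \setminus \pi^{-1}(X_{sing})$; one must check the argument is insensitive to this, which it is since the estimates above are stable under $\PSH_\eta$-decreasing limits and under adding constants. I expect this regular-value reduction, together with verifying that the energy estimate is genuinely uniform in $j$ (i.e. that no mass of $\eta_{u_i^j}^n$ escapes as $j\to\infty$ — guaranteed precisely because $u_0,u_1\in\mathcal E^1_\eta$), to be the main point requiring care; everything else is a transcription of \cite[Section 2.2]{da1} combined with the singular pluripotential facts already collected in the Preliminaries. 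Finally, the statement $P_\eta(u_0,u_1)\in\mathcal E^1_\eta(Y)$ transfers back to $P_\o$ on $X$ via the identifications \eqref{eq: PSH_eqv} and $\pi^*\mathcal E^1_\o(X)=\mathcal E^1_\eta(Y)$, which is the form needed for Theorem \ref{thm: metrictheorem}.
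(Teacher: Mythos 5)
Your strategy is essentially the paper's: approximate $u_0,u_1$ from above by potentials in $\pi^*\mathcal H_\o(X)$ via Theorem \ref{thm: EGZapprox}, shift by generic constants so that Proposition \ref{prop: MA_form} applies, use the resulting partition formula for a bound uniform in $j$, and pass to the decreasing limit. The difference is in which quantity gets bounded. The paper never touches $I_\eta$: after normalizing $u_0,u_1<-1$ it estimates, with $P_j:=P_\eta(\phi_j,\psi_j)$,
\begin{equation*}
\int_Y |P_j|\,\eta_{P_j}^n=\int_{\{P_j=\phi_j\}}|\phi_j|\,\eta_{\phi_j}^n+\int_{\{P_j=\psi_j\}}|\psi_j|\,\eta_{\psi_j}^n\le \int_Y|\phi_j|\,\eta_{\phi_j}^n+\int_Y|\psi_j|\,\eta_{\psi_j}^n,
\end{equation*}
the right side being uniformly bounded by Theorem \ref{thm: E1eqvdef}(iii), and then concludes at once from criterion (v) of Theorem \ref{thm: E1eqvdef}. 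This makes the uniform bound a one-line consequence of the partition formula, with no cocycle identities and no mixed Monge--Amp\`ere masses.

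Your energy-estimate route, as written, has two problems. First, the displayed inequality has the wrong measure: for $u\ge v$ the standard monotonicity reads
\begin{equation*}
\frac1{V_\eta}\int_Y(u-v)\,\eta_u^n\;\le\; I_\eta(u)-I_\eta(v)\;\le\;\frac1{V_\eta}\int_Y(u-v)\,\eta_v^n,
\end{equation*}
so the upper bound on $I_\eta(u_0)-I_\eta(P)$ must be taken against $\eta_{P}^n$ (the measure of the \emph{smaller} potential); with $\eta_{u_0}^n$ on the right your inequality points the other way and the step fails. Second, your combined bound involves $\int_Y|u_0^j-u_1^j|\,\eta_{u_0^j}^n$, hence the mixed mass $\int_Y|u_1^j|\,\eta_{u_0^j}^n$, whose uniform boundedness is not among the equivalent conditions of Theorem \ref{thm: E1eqvdef} and would need a separate comparison lemma. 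Both issues are repairable: using $\eta_P^n$ and the partition formula, $\int_Y(u_0-P)\,\eta_P^n=\int_{\Lambda_{u_1}}(u_0-u_1)\,\eta_{u_1}^n\le\int_Y|u_1|\,\eta_{u_1}^n$ for negative potentials, which gives the uniform lower bound on $I_\eta(P_j)$ you want---but at that point you have essentially rederived the paper's estimate, so the direct route through criterion (v) is the cleaner one. The Sard-type perturbation and the final transfer back to $X$ via $\pi^*$ are exactly as in the paper.
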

\begin{proof} We can assume without loss of generality that $u_0,u_1 < -1$. Using Theorem \ref{thm: EGZapprox}, suppose $\phi_j,\psi_j \in \pi^*\mathcal H_\o(X)$ are such that $0> \phi_j \searrow u_0$ and $0> \psi_j \searrow u_1$ pointwise. Such sequences exist by Theorem \ref{thm: EGZapprox}. By adding a small constant if necessary, we can also arrange that $\{\psi_j -\phi_j=0 \} \subset X$ contains no critical point of $\psi_j -\phi_j$. Using the last result we can start to write:
\begin{flalign}\int_Y |P_\eta(\phi_j,\psi_j)|\eta_{P_\eta(\phi_j,\psi_j)}^n \nonumber &= \int_{\{P_\eta(\phi_j,\psi_j) = \phi_j\}}|\phi_j|\eta_{\phi_j}^n + \int_{\{P_\eta(\phi_j,\psi_j) = \psi_j\}}|\psi_j|\eta_{\psi_j}^n \nonumber \\
&\leq \int_Y|\phi_j|\eta_{\phi_j}^n + \int_Y|\psi_j|\eta_{\psi_j}^n \nonumber.
\end{flalign}
As we have $P_\eta(\phi_j,\psi_j) \searrow P_\eta(u_0,u_1)$ and $u_0,u_1 \in \mathcal E^1_\eta(Y)$, letting $j \to \infty$ in the above estimate, by Theorem \ref{thm: E1eqvdef} we obtain that $P_\eta(u_0,u_1) \in \mathcal E^1_\eta(Y)$, finishing the proof.
\end{proof}

\subsection{The metric space $(\mathcal H_\o(X),d_1)$ for K\"ahler manifolds $(X,\o)$}

For this subsection, assume that $(X,\o)$ is a smooth K\"ahler manifold. We now explain some of the results of \cite{da2}, that give Theorem \ref{thm: metrictheorem} in the K\"ahler case. In \cite{da2}, the author introduced an $L^1$-type Finsler metric on the Fr\'echet manifold $\mathcal H_\o(X)$:
\begin{equation}
\| \psi\| = \int_X |\psi| \o_u^n, \ \ \psi \in T_u \mathcal H_\o.\label{eq: Finsler}
\end{equation}
The $L^2$ analog of this metric is the much studied Riemannian metric of Mabuchi--Semmes--Donaldson \cite{mab,se,do1}. Given this Finsler metric one can compute the length $l(\gamma_t)$ of smooth curves $t\to \gamma_t$ of $\mathcal H_\o(X)$, ultimately yielding the path length metric:
$$d_1^\omega(u_0,u_1)= \inf\{l(\gamma_t)| \ t \to v_t \textup{ is smooth and connects }u_0,u_1\}.$$

Then in \cite[Corollary 4.14]{da2} we showed that $d_1^\o$ thus defined satisfies \eqref{eq: d_1 formula}.
This approach is certainly more attractive from the geometric point of view and generalizes to other Orlicz-Finsler type structures, as detailed in \cite{da1,da2}. However, most of these Finsler geometric arguments do not carry over directly to our degenerate/singular setting, not to mention the delicate estimates about weak geodesics from \cite{c1,bl1}, which are one of the building blocks of the theory. 

To circumvent these difficulties, in the degenerate/singular case we will ultimately define the metric $d_1^\o$ using \eqref{eq: d_1 formula}, and will use approximation by K\"ahler classes to establish many additional properties of $(\mathcal H_\o(X),d_1)$. Before we do this, let us collect the results of \cite{da2} that are most relevant for this paper:

\begin{theorem}[\cite{da2}] \label{thm: metrictheorem_Kahler} Let $(X,\o)$ be a smooth compact K\"ahler manifold. Suppose $d_1$ is the path length metric of the Finsler structure \eqref{eq: Finsler}. Then this metric extends to $d_1: \mathcal E^1_{\o}(X) \times \mathcal E^1_{\o}(X) \to \Bbb R$ and satisfies the identity
\begin{equation*}
d^\o_1(u_0,u_1)=I_{\o}(u_0) + I_{\o}(u_1)-2 I_{\o}(P_{\o}(u_0,u_1)).
\end{equation*}
The resulting structure $(\mathcal E^1_{\o}(X),d_1^\o)$ is a complete geodesic metric space, coinciding with the metric completion of $({\mathcal H}_{\o}(X),d_1^\o)$. Additionally, given $u_0,u_1 \in \mathcal E^1_{\o}(X)$, the finite energy geodesic $[0,1]\ni t \to u_t \in \mathcal E^1_{\o}(X)$ is a $d_1$-geodesic curve.
\end{theorem}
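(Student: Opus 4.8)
The plan is to define $d_1^\o$ on $\mathcal H_\o(X)$ as the path-length metric of the Finsler norm \eqref{eq: Finsler}, to prove the envelope identity \eqref{eq: d_1 formula} first for pairs of \emph{smooth} potentials, and only then to extend the whole structure to $\mathcal E^1_\o(X)$ by decreasing approximation. The basic object throughout is the weak geodesic $t\mapsto u_t$ connecting $u_0,u_1\in\mathcal H_\o(X)$: it exists by \cite{c1}, is $C^{1,\bar 1}$ with uniformly bounded Laplacian \cite{bl1}, and is the decreasing limit, as $\veps\searrow 0$, of the smooth $\veps$-geodesics $u_t^\veps$ solving $(\textup{pr}_2^*\o+i\ddbar v)^{n+1}=\veps\,\textup{pr}_2^*\o^{\,n+1}$ on $S\times X$. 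I would carry all delicate computations out at the level of the $u_t^\veps$ and pass to the limit $\veps\to 0$ at the end.

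The identity on $\mathcal H_\o(X)$ rests on three facts. First, $I_\o$ is affine along geodesics: from \eqref{eq: AMdef}, $\frac{d}{dt}I_\o(u_t)=V_\o^{-1}\int_X\dot u_t\,\o_{u_t}^n$, and the geodesic equation forces this quantity to be constant, so $I_\o(u_t)=(1-t)I_\o(u_0)+tI_\o(u_1)$. Second, the \emph{comparable case}: if $u_0\ge u_1$, the comparison principle for the geodesic Dirichlet problem shows that geodesics with ordered endpoints stay ordered, and a time-translation argument applied to $u_t$ yields that $t\mapsto u_t(x)$ is non-increasing; hence $\dot u_t\le 0$, the $L^1$-speed $V_\o^{-1}\int_X|\dot u_t|\o_{u_t}^n=-V_\o^{-1}\int_X\dot u_t\,\o_{u_t}^n$ is constant in $t$, and the length of the geodesic telescopes to $d_1^\o(u_0,u_1)=I_\o(u_0)-I_\o(u_1)$. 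Third, a \emph{Pythagorean identity}: writing $P=P_\o(u_0,u_1)\le u_0,u_1$, one shows $d_1^\o(u_0,u_1)=d_1^\o(u_0,P)+d_1^\o(P,u_1)$, and applying the comparable case to each summand gives exactly $I_\o(u_0)+I_\o(u_1)-2I_\o(P)$, which is \eqref{eq: d_1 formula}. Establishing this requires knowing the weak geodesic is genuinely length-minimizing: the upper bound $d_1^\o(u_0,u_1)\le$ the formula follows by computing the $L^1$-length of the smooth $u_t^\veps$ and letting $\veps\to 0$, while the matching lower bound comes from the first variation of the length along an arbitrary smooth competing curve together with the comparable-case identity.

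With \eqref{eq: d_1 formula} available on $\mathcal H_\o(X)$, I would extend $d_1^\o$ to $\mathcal E^1_\o(X)$ by \emph{defining} it through the right-hand side of \eqref{eq: d_1 formula} and checking consistency: for $u_0,u_1\in\mathcal E^1_\o(X)$ choose smooth $\phi_j\searrow u_0$, $\psi_j\searrow u_1$; then $P_\o(\phi_j,\psi_j)\searrow P_\o(u_0,u_1)$, the limit lies in $\mathcal E^1_\o(X)$ by the rooftop stability result \cite[Theorem 3]{da1}, and continuity of $I_\o$ along decreasing sequences gives $d_1^\o(\phi_j,\psi_j)\to I_\o(u_0)+I_\o(u_1)-2I_\o(P_\o(u_0,u_1))$. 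The metric axioms then transfer to $\mathcal E^1_\o(X)$: symmetry is clear, the triangle inequality passes to the limit from $\mathcal H_\o(X)$, and $d_1^\o(u_0,u_1)=0$ forces $I_\o(u_0)=I_\o(P_\o(u_0,u_1))=I_\o(u_1)$, whence $u_0=u_1$ by strict monotonicity of $I_\o$. Density of $\mathcal H_\o(X)$ in $(\mathcal E^1_\o(X),d_1^\o)$ is immediate from the same approximation.

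For completeness I would show that a $d_1^\o$-Cauchy sequence has uniformly bounded $I_\o$-energy and, after extracting a monotone subsequence built from successive rooftop envelopes, converges in $L^1$ to some $u\in\mathcal E^1_\o(X)$, with $d_1^\o$-convergence following from \eqref{eq: d_1 formula} and $I_\o$-continuity; this simultaneously identifies $(\mathcal E^1_\o(X),d_1^\o)$ with the abstract completion of $(\mathcal H_\o(X),d_1^\o)$. For the final assertion, given $u_0,u_1\in\mathcal E^1_\o(X)$ the finite energy geodesic $u_t=\lim_k u_t^k$ is the decreasing limit of the bounded geodesics $u_t^k$ joining $u_0^k=\max(u_0,-k)$ and $u_1^k=\max(u_1,-k)$; each $u_t^k$ already satisfies $d_1^\o(u_s^k,u_t^k)=|s-t|\,d_1^\o(u_0^k,u_1^k)$ by the analysis on bounded potentials, and since $P_\o(u_s^k,u_t^k)\searrow P_\o(u_s,u_t)$ as $k\to\infty$, continuity of $I_\o$ upgrades this to $d_1^\o(u_s,u_t)=|s-t|\,d_1^\o(u_0,u_1)$, so $u_t$ is a $d_1^\o$-geodesic. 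The main obstacle is the length formula on $\mathcal H_\o(X)$ itself: proving that the weak geodesic minimizes the $L^1$-Finsler length among all smooth competitors, and that the $L^1$-speed $V_\o^{-1}\int_X|\dot u_t^\veps|\o_{u_t^\veps}^n$ is controlled uniformly as $\veps\to 0$ so that the comparable-case and Pythagorean identities survive in the limit. This is exactly where the delicate regularity theory for weak geodesics \cite{c1,bl1} and the first-variation analysis are indispensable.
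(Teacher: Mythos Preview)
The paper does not prove this theorem; it is stated as a preliminary result quoted from \cite{da2}, with no argument given beyond the citation. So there is no ``paper's own proof'' to compare your attempt against. What you have written is a reasonable high-level sketch of the strategy actually carried out in \cite{da2}: establish the envelope formula on $\mathcal H_\o(X)$ via the comparable case $d_1^\o(u_0,u_1)=I_\o(u_0)-I_\o(u_1)$ when $u_0\ge u_1$, combine it with the Pythagorean decomposition through $P_\o(u_0,u_1)$, and then extend to $\mathcal E^1_\o(X)$ by monotone approximation and rooftop stability. Your identification of the hard point---showing the weak geodesic is genuinely $L^1$-length-minimizing, with uniform control of the speed along the $\veps$-geodesics---is accurate, and this is precisely the content of \cite{da2} that the present paper takes as input.

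One remark on your extension step: your sketch of completeness (``extracting a monotone subsequence built from successive rooftop envelopes'') is a bit vague. The actual mechanism, which the present paper does reproduce in the semipositive setting (Proposition~\ref{prop: E1complete}), is to form iterated envelopes $v^k_l=P_\o(u_k,\ldots,u_{k+l})$, use the $d_1$-contractivity of $P_\o(v,\cdot)$ to show each $\{v^k_l\}_l$ is decreasing and Cauchy, and then argue the limits $v^k$ increase to the desired limit. If you intend to write out the proof rather than cite it, that construction is what makes the argument work.
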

 
In the following proposition let us recall a number of consequences/ingredients of this theorem that will be later generalized to the non-K\"ahler case:

\begin{proposition}\label{thm: da2properties} Suppose $u_0,u_1 \in \mathcal E^1_\o(X)$. The following hold:\\
\noindent(i)\textup{\cite[Proposition 4.12]{da2}} For $v \in \mathcal E^1_\o(X)$ we have $d^\o_1(P_\o(v,u_0),P_\o(v,u_1)) \leq d_1^\o(u_0,u_1).$\\
\noindent(ii)\textup{\cite[Lemma 4.15]{da2}} $I_\o$ is $d_1$-Lipschitz continuous, and for any finite energy geodesic $t \to u_t$, the map $t \to I_\o(u_t)$ is linear.
\end{proposition}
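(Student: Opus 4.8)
The plan is to deduce both items directly from the identity for $d_1^\o$ furnished by Theorem \ref{thm: metrictheorem_Kahler} (these statements are recorded as \cite[Proposition 4.12 and Lemma 4.15]{da2}); I organize the argument so that every ingredient has an obvious counterpart on a resolution $(Y,\eta)$. We will repeatedly use that $P_\o(a,b)\le\min(a,b)$, so that monotonicity of $I_\o$ makes each summand in
\begin{equation*}
d_1^\o(u_0,u_1)=\bigl(I_\o(u_0)-I_\o(P_\o(u_0,u_1))\bigr)+\bigl(I_\o(u_1)-I_\o(P_\o(u_0,u_1))\bigr)
\end{equation*}
nonnegative.

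For the Lipschitz bound in (ii) I would argue directly: assuming without loss of generality $I_\o(u_0)\ge I_\o(u_1)$, the display gives $0\le I_\o(u_0)-I_\o(u_1)\le I_\o(u_0)-I_\o(P_\o(u_0,u_1))\le d_1^\o(u_0,u_1)$, so $I_\o$ is even $1$-Lipschitz. For the linearity of $t\mapsto I_\o(u_t)$ along a finite energy geodesic, I would first recall the classical fact that the Monge--Amp\`ere energy is affine along a \emph{bounded} geodesic, obtained by expanding the homogeneous equation \eqref{eq: weakgeodeq} on $S\times X$ (see, e.g., \cite{bbegz}). Applying this to the bounded geodesics $u^k_t$ joining $\max(u_0,-k)$ and $\max(u_1,-k)$, and noting that by the comparison principle $u^k_t\searrow u_t$ pointwise as $k\to\infty$, the monotone continuity of $I_\o$ (Proposition \ref{prop: AM_mon_cont}) transfers affineness to the limit curve.

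Part (i) is where the real content lies. Write $w=P_\o(u_0,u_1)$, $p_i=P_\o(v,u_i)$ and $q=P_\o(v,w)$. Since $w\le u_0$ and $w\le u_1$ we get $q\le p_0$ and $q\le p_1$, hence $q\le P_\o(p_0,p_1)$, so $I_\o(P_\o(p_0,p_1))\ge I_\o(q)$; plugging this into the $d_1^\o$-formula for $d_1^\o(p_0,p_1)$ yields
\begin{equation*}
d_1^\o\bigl(P_\o(v,u_0),P_\o(v,u_1)\bigr)\le\bigl(I_\o(p_0)-I_\o(q)\bigr)+\bigl(I_\o(p_1)-I_\o(q)\bigr).
\end{equation*}
Thus the inequality reduces to the \emph{contraction estimate}
\begin{equation*}
I_\o\bigl(P_\o(v,b)\bigr)-I_\o\bigl(P_\o(v,a)\bigr)\le I_\o(b)-I_\o(a),\qquad a\le b\ \text{ in }\ \mathcal E^1_\o(X),
\end{equation*}
applied with $a=w$ and $b=u_i$ (recall $w\le u_i$), after which summing over $i$ reassembles $d_1^\o(u_0,u_1)$. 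To prove the contraction estimate I would differentiate $t\mapsto I_\o(P_\o(v,h_t))$ along the $\o$-psh segment $h_t=(1-t)a+tb$: since $\o_{P_\o(v,h_t)}^n$ is carried by the contact set $\{P_\o(v,h_t)=v\}\cup\{P_\o(v,h_t)=h_t\}$, on which $\tfrac{d}{dt}P_\o(v,h_t)$ vanishes, respectively equals $b-a\ge0$, one finds
\begin{equation*}
\tfrac{d}{dt}I_\o\bigl(P_\o(v,h_t)\bigr)=\tfrac{1}{V_\o}\int_{\{P_\o(v,h_t)=h_t\}}(b-a)\,\o_{h_t}^n\le\tfrac{1}{V_\o}\int_X(b-a)\,\o_{h_t}^n=\tfrac{d}{dt}I_\o(h_t),
\end{equation*}
and integrating over $[0,1]$ gives the estimate.

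I expect the main obstacle to be making this last step rigorous: one needs the differentiability of $t\mapsto I_\o(P_\o(v,h_t))$ and the precise structure of $\o_{P_\o(v,h_t)}^n$ on the contact set — the envelope analogue of Proposition \ref{prop: MA_form} — and, to handle general $v,a,b\in\mathcal E^1_\o(X)$, a preliminary reduction to bounded data via max-truncation and the monotone continuity of $I_\o$. In the K\"ahler case all of this is supplied by \cite{da1,da2}; the purpose of the present formulation is that the same ingredients become available on the resolution through Theorems \ref{thm: EGZapprox} and \ref{thm:rooftopenergystable} and Propositions \ref{prop: AM_mon_cont} and \ref{prop: MA_form}.
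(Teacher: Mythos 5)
The paper itself gives no proof of this proposition: it is stated in the smooth K\"ahler setting purely as a recollection of \cite[Proposition 4.12]{da2} and \cite[Lemma 4.15]{da2}, to be transported later to $(Y,\eta+\varepsilon\eta')$ and then to the degenerate class, so the only meaningful comparison is with \cite{da2} itself. There, part (i) is obtained inside the $L^p$ Finsler framework by comparing weak geodesics and their initial tangent vectors; your route is genuinely different, working entirely with the closed formula of Theorem \ref{thm: metrictheorem_Kahler} and reducing everything to the single monotonicity statement $I_\o(b)-I_\o(P_\o(v,b))\ge I_\o(a)-I_\o(P_\o(v,a))$ for $a\le b$. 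That reduction is airtight: $q\le P_\o(p_0,p_1)$ gives the first inequality with the correct sign, and summing the contraction estimate over $i=0,1$ does reassemble $d_1^\o(u_0,u_1)$. The differential proof of the monotonicity is also correct in substance, and the two inputs you flag --- differentiability of $t\mapsto I_\o(P_\o(v,h_t))$ and the structure of $\o^n_{P_\o(v,h_t)}$ on the contact set --- are exactly the envelope differentiability and orthogonality results of \cite{da1,dr1}, i.e.\ genuine citations rather than gaps; the bounded-truncation reduction you mention is likewise routine via Proposition \ref{prop: AM_mon_cont}. One point worth making explicit when you write this up: the relevant set in the derivative formula is $\{h_t<v\}$, which is disjoint from the contact set $\{P_\o(v,h_t)=v\}$, and it is this disjointness (the two-obstacle analogue of Proposition \ref{prop: MA_form}) that justifies $\mathbbm{1}_{\{h_t<v\}}\o^n_{P_\o(v,h_t)}\le\o^n_{h_t}$ and rules out double counting on $\{P_\o(v,h_t)=v=h_t\}$. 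Part (ii) as you present it is complete --- the $1$-Lipschitz bound from the nonnegativity of both summands, and linearity as a decreasing limit of linear functions with finite values at the endpoints --- and in fact mirrors how the paper handles the singular analogue in Proposition \ref{prop: E1extension}. Your energy-only argument buys independence from the Finsler machinery, at the price of being specific to $p=1$.
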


\subsection{The existence/properness principle of \cite{dr2}}

We recall here the general framework of \cite{dr2} suitable for proving equivalence between existence of canonical metrics in K\"ahler geometry and properness of certain energy functionals.
The data $(\mathcal R,d,F,G)$ is defined by the following ``axioms":
\begin{enumerate}[label = (A\arabic*)]
\setlength{\itemsep}{1pt}
    \setlength{\parskip}{1pt}
    \setlength{\parsep}{1pt} 
  \item\label{a1} $(\mathcal R,d)$ is a metric space with a 
%special 
distinguished 
element $0\in\mathcal R$,
        whose metric completion is denoted $(\overline{\mathcal R},d)$.
  \item\label{a2} $F : \mathcal R \to \Bbb R$ 
  %\cup \{ +\infty\}$ 
is $d$-lsc. Let  
        $F: \overline{\mathcal R} \to \Bbb R \cup \{ +\infty\}$ 
        be the largest lsc extension
        of $F$:
$$F(u) = \sup_{\varepsilon > 0 } \bigg(\inf_{\substack{v \in \mathcal R\\ d(u,v) \leq \varepsilon}} F(v) \bigg), \ \ u \in \overline{ \mathcal R}.$$
%For each 
%$u,v\in{\mathcal R}$ define also
%$$
%F(u,v):=F(v)-F(u).
%$$
\item\label{a3} 
The set of minimizers of $F$ on $\overline{\mathcal R}$ is denoted by
$$
\mathcal M:= 
\Big\{ u \in \overline{\mathcal R} \ : \ F (u)= 
\inf_{v \in \overline{\mathcal R}} F(v)
\Big\}.
$$
\item\label{a4} $G$ is a group  
  acting on ${\mathcal R}$ by 
  $G\times{\mathcal R}\ni(g,u) \to g.u\in {\mathcal R}$. Denote by
  %$\overline{\mathcal R}_G=
${\mathcal R} /G$ the orbit space, by $Gu\in{\mathcal R} /G$ the 
  orbit of $u\in{\mathcal R}$, 
  and~define~$d_G:{\mathcal R}/G\times {\mathcal R}/G\ra\RR_+$ by
$$
d_G(Gu,Gv):=\inf_{f,g\in G}d(f.u,g.v).
$$
\end{enumerate}
The data $(\mathcal R,d,F,G)$
satisfies the following ``properties":
\begin{enumerate}[label = (P\arabic*)]
\setlength{\itemsep}{1pt}
    \setlength{\parskip}{1pt}
    \setlength{\parsep}{1pt} 
  \item\label{p1}%\tag{P1} 
 For any $u_0,u_1 \in \mathcal R$ there exists a $d$--geodesic segment $[0,1] \ni t \mapsto u_t \in \overline{\mathcal R}$ for which
%\begin{equation} \tag{P1}
 $t\mapsto F(u_t) \textup{ is continuous and convex on }[0,1].$
%\end{equation}
  \item \label{p2} %\tag{P2}
If  $\{u_j\}_j\subset \overline{\mathcal R}$  satisfies 
$\lim_{j\ra\infty}F(u_j)= \inf_{\overline{\mathcal R}} F$, and
for some $C>0$, $d(0,u_j) \leq C$ for all $j$, then there exists a $u\in \mathcal M$ and a subsequence $\{u_{j_k}\}_k$ 
$d$-converging to $u$.
  \item\label{p3} %\tag{P3} 
  $\mathcal M \subset \mathcal R.$
  \item\label{p4} $G$ acts on ${\mathcal R}$ by $d$-isometries.
  \item \label{p5} $G$ acts on $\mathcal M$ transitively.
  \item \label{p6} If $\mathcal M \neq \emptyset$,  
then for any $u,v \in \mathcal R$ there exists $g \in G$
such that $d_G(Gu,Gv)=d(u,g.v)$.

  \item \label{p7} For all $u,v \in \mathcal R$ and $g \in G$,
$F(u) -F(v)=F(g.u) - F(g.v)$.
\end{enumerate}
The following result provides the framework that relates existence of canonical \K metrics
to properness of functionals with respect to a certain the Finsler metric:

\begin{theorem}\textup{\cite[Theorem 3.4]{dr2}}
\label{thm: ExistencePrinc}
Let $(\mathcal R,d,F,G)$ be as above, satisfying \ref{a1}-\ref{a4} and \ref{p1}-\ref{p7}. The following conditions are equivalent:\\
\noindent
(i) $\mathcal M$ is nonempty.

\noindent
(ii) $F:{\mathcal R}\ra \RR$ 
is $G$-invariant, and
for some $C,D>0$,
\begin{equation}\label{Dproperness}
    F(u) \geq Cd_G(G0,Gu)-D,\ \textup{ for all } u \in \mathcal R.
\end{equation}
%\end{itemize}
\end{theorem}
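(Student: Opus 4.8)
The plan is to prove this as an abstract statement about the quadruple $(\mathcal R,d,F,G)$, with the compactness property \ref{p2} and the geodesic convexity \ref{p1} as the only genuine inputs. Since the largest lsc extension introduced in \ref{a2} satisfies $\inf_{\overline{\mathcal R}}F=\inf_{\mathcal R}F$, minimizing sequences can be chosen inside $\mathcal R$, and I normalize $\inf_{\overline{\mathcal R}}F=0$. The implication (ii)$\Rightarrow$(i) is the quick half: \eqref{Dproperness} forces $F\geq -D$, so $\inf F$ is finite; taking $u_j\in\mathcal R$ with $F(u_j)\to\inf F$ and using $d_G(G0,Gu_j)=\inf_{g}d(0,g.u_j)$, pick $g_j$ with $d(0,g_j.u_j)\leq d_G(G0,Gu_j)+1$. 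Inverting \eqref{Dproperness} bounds $d_G(G0,Gu_j)\leq(F(u_j)+D)/C$, so $d(0,g_j.u_j)$ stays bounded, while $G$-invariance gives $F(g_j.u_j)=F(u_j)\to\inf F$. Thus $\{g_j.u_j\}$ is a $d$-bounded minimizing sequence and \ref{p2} delivers a subsequential limit in $\mathcal M$.

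For (i)$\Rightarrow$(ii), fix a minimizer $m\in\mathcal M\subset\mathcal R$ (\ref{p3}). Invariance is immediate: \ref{p5} gives $g.m\in\mathcal M$, so $F(g.m)=F(m)$, and \ref{p7} with $v=m$ then yields $F(g.u)=F(u)$. For coercivity, the inequality $d_G(G0,Gu)\leq d_G(G0,Gm)+d_G(Gm,Gu)$ reduces matters to bounding $F$ below by $d_G(Gm,\cdot)$. Using \ref{p6} I replace each $u$ by the representative of its orbit closest to $m$, so that $d(m,u)=d_G(Gm,Gu)\leq d(m,g.u)$ for all $g$; this is harmless by $G$-invariance of $F$ and the isometric action \ref{p4}. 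Running the geodesic $\gamma:[0,L]\to\overline{\mathcal R}$ from $m$ to $u$ furnished by \ref{p1} (arclength parametrized, $L=d(m,u)$), the function $f(t)=F(\gamma(t))$ is convex with $f(0)=0=\min F$, so $t\mapsto f(t)/t$ is nondecreasing. Everything then reduces to a single uniform claim: with $t_0=1$ fixed, there is $\epsilon>0$ such that $F(\gamma(t_0))\geq\epsilon$ along every closest-point geodesic with $L\geq t_0$; this forces $f(L)\geq\epsilon L$ for $L\geq t_0$, hence $F(u)\geq\epsilon\, d_G(Gm,Gu)-\epsilon$ (the case $L<t_0$ being trivial), and the basepoint change completes the bound.

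The core is to prove that claim by contradiction through \ref{p2}. If it fails, for each $j$ there is a closest-point representative $u_j$ with $L_j=d(m,u_j)\geq t_0$ and $w_j:=\gamma_j(t_0)$ satisfying $F(w_j)<1/j\to\inf F$. Since $d(0,w_j)\leq d(0,m)+t_0$ is bounded, \ref{p2} produces a subsequence $w_{j_k}\to m'\in\mathcal M$ with $d(m,m')=t_0$. As $m'\in\mathcal M=Gm$ by \ref{p5}, write $m'=g'.m$; the closest-point property together with \ref{p4} gives $d(m',u_{j_k})=d(m,(g')^{-1}.u_{j_k})\geq L_{j_k}$, whereas $w_{j_k}$ lying on the geodesic gives $d(m',u_{j_k})\leq d(m',w_{j_k})+(L_{j_k}-t_0)$. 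Combining yields $t_0\leq d(m',w_{j_k})\to 0$, a contradiction.

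I expect this compactness step to be the main obstacle, precisely because $\mathcal M$ is a full, possibly noncompact, $G$-orbit by \ref{p5}: one cannot bound $F$ below using $d(m,\cdot)$ directly, and the entire purpose of the closest-point normalization from \ref{p6} (with the isometric action \ref{p4}) is to convert a hypothetical failure of coercivity into the geodesic contradiction above. The one routine preliminary is verifying that $d_G$ is a genuine pseudometric, so that the triangle inequality invoked in the basepoint change is legitimate.
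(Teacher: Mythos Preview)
The paper does not give its own proof of this statement: Theorem~\ref{thm: ExistencePrinc} is quoted verbatim from \cite[Theorem 3.4]{dr2} and used as a black box in the applications (Sections~5.1--5.2). There is therefore nothing in the paper to compare your argument against.

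That said, your proof is correct and is essentially the original argument from \cite{dr2}. The (ii)$\Rightarrow$(i) direction via $G$-invariance and a $d$-bounded minimizing sequence fed into \ref{p2} is exactly the standard step. For (i)$\Rightarrow$(ii), the scheme---normalize to a closest orbit representative via \ref{p6}, run the geodesic from the minimizer using \ref{p1}, reduce to a uniform lower bound at time $t_0=1$, and derive a contradiction from \ref{p2}, \ref{p5}, \ref{p4} when this fails---is the proof in \cite{dr2}. The key inequality $d(m',u_{j_k})\geq L_{j_k}$ coming from the closest-point property combined with $d(m',u_{j_k})\leq d(m',w_{j_k})+(L_{j_k}-t_0)$ is the crux, and you have it right. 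The only minor point worth noting explicitly is that $w_j=\gamma_j(t_0)$ lies in $\overline{\mathcal R}$ rather than $\mathcal R$, but \ref{p2} is stated for sequences in $\overline{\mathcal R}$, so there is no gap.
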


\section{The metric space $(\mathcal H_\o(X),d_1)$ for normal K\"ahler spaces $(X,\o)$}

For this section assume again that $(X,\o)$ is a normal K\"ahler space with Hodge metric $\o$, and $\pi: Y \to X$ is a desingularization with $\eta = \pi^* \o$. For a while, we will work with the smooth but semi-positive structure $(Y,\eta)$ and later we will relate everything to $(X,\o)$. 

In analogy with the introduction, the curve $[0,1] \ni t \to u_t \in \PSH_\eta(Y) \cap L^\infty$ is a bounded  geodesic connecting $u_0,u_1  \in \PSH_\eta(Y) \cap L^\infty$ if its complexification $u$ solves the following boundary value problem on $S \times Y$:
\begin{equation}\label{eq: weakgeodeq1}
(\textup{pr}_2^* \eta + i \ddbar u)^{n+1}=0, \ \ \ u(0,y)=u_0(y), \ \ \ u(1,y)=u_1(y).
\end{equation}
By an observation of Berndtsson in the K\"ahler case \cite[Section 2.1]{brn1} this boundary value problem is always solvable, with uniformly $t$-Lipschitz solution, and his arguments go trough without modifications in the semi-positive case as well \cite[Lemma 4.5]{bbegz}. More precisely, $u$ arises as the following upper envelope:
\begin{equation}\label{eq: boundedgeodformulaonY}
u = \sup \{v \in \PSH_{\textup{pr}_2^* \eta}(S \times Y) \textup{ s.t. } v(t+ir,y)=v(t,y) \textup{ and } v_0 \le u_0, v_1 \leq u_1 \}.
\end{equation}
We introduce $d_1^\eta$ in the obvious manner: 
$$d_1^{\eta}(u_0,u_1) = I_{\eta}(u_0)+ I_{\eta}(u_1)-2I_{\eta}(P_{\eta'}(u_0,u_1)), \ \ u_0,u_1 \in \PSH_{\eta}(Y)\cap L^\infty.$$ 
As we will see soon, with this definition $d_1^{\eta}$ is indeed a metric on $\PSH_{\eta}(Y)\cap L^\infty$, whose completion will be $\mathcal E^1_\eta(Y)$. The following approximation result rests on the trivial observation that $\PSH_{\eta}(Y)\cap L^\infty \subset \PSH_{\eta+\varepsilon \eta'}(Y)\cap L^\infty$. By \cite[Example 3.5]{dn}, the analogous inclusion does not hold for $\mathcal E_{\eta}^1(Y)$ and $ \mathcal E^1_{\eta+\varepsilon \eta'}(Y)$, and because of this, we first have to focus on bounded potentials only:

\begin{proposition} \label{prop: lotsoflimits} Suppose $u_0,u_1 \in \PSH_\eta(Y) \cap L^\infty$ and  $\varepsilon_j \to 0$. Assume $u^j_0,u^j_1 \in \PSH_{\eta + \varepsilon_j \eta'}(Y) \cap L^\infty$ satisfies $u^j_0\searrow u_0$ and $u^j_1\searrow u_1 $ as $j \to \infty$. Additionally, let $[0,1] \ni t \to u^{\varepsilon_j}_t,u_t \in \PSH_{\eta + \varepsilon_j \eta'}(Y) \cap L^\infty$ be the bounded geodesics connecting $u_0^j,u_1^j$ and $u_0,u_1$, with respect to the different K\"ahler classes. The following hold:\\
(i) $P_{\eta + \varepsilon_j \eta'}(u_0^j,u_1^j) \searrow P_{\eta}(u_0,u_1)$ pointwise as $j \to \infty$.\\
(ii) $u_t^{\varepsilon_j} \searrow u_t$ pointwise as $j \to \infty$.\\
(iii)  $d_1^\eta(u_0,u_1)= \lim_{\varepsilon \to 0}d_1^{\eta + \varepsilon_j\eta'}(u_0^j,u_1^j).$
\end{proposition}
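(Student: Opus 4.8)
\textbf{Proof proposal for Proposition \ref{prop: lotsoflimits}.}

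The plan is to prove the three statements in the order (i), (ii), (iii), since (i) feeds into (ii) and both feed into (iii). For (i), the key point is that decreasing envelopes behave well under decreasing perturbations of both the potentials and the class. First I would observe that $P_{\eta + \veps_j\eta'}(u^j_0,u^j_1)$ is a decreasing sequence in $j$: indeed $\PSH_{\eta+\veps_{j+1}\eta'}(Y) \subset \PSH_{\eta+\veps_j\eta'}(Y)$ since $\veps_{j+1}<\veps_j$, and the competitors $u^j_0,u^j_1$ also decrease, so the defining suprema decrease. Hence the pointwise limit $\phi:=\lim_j P_{\eta+\veps_j\eta'}(u^j_0,u^j_1)$ exists and is $\eta$-psh (it is a decreasing limit of $(\eta+\veps_j\eta')$-psh functions, and for any fixed $\veps>0$ eventually $\veps_j<\veps$, so $\phi\in\PSH_{\eta+\veps\eta'}(Y)$ for all $\veps>0$, i.e. $\phi\in\PSH_\eta(Y)$). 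Clearly $\phi\le u_0,u_1$ pointwise, so $\phi\le P_\eta(u_0,u_1)$. Conversely $P_\eta(u_0,u_1)\le u^j_0,u^j_1$ and $P_\eta(u_0,u_1)\in\PSH_\eta(Y)\subset\PSH_{\eta+\veps_j\eta'}(Y)$, so $P_\eta(u_0,u_1)\le P_{\eta+\veps_j\eta'}(u^j_0,u^j_1)$ for each $j$, whence $P_\eta(u_0,u_1)\le\phi$. This gives $\phi = P_\eta(u_0,u_1)$, and since the convergence is monotone it is in particular pointwise; this proves (i).

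For (ii), I would use the envelope representation \eqref{eq: boundedgeodformulaonY} of the bounded geodesic on $S\times Y$. Exactly as in (i), $u^{\veps_j}$ is a decreasing sequence of $\textup{pr}_2^*(\eta+\veps_j\eta')$-psh functions on $S\times Y$ (decreasing boundary data plus shrinking class, so the sups in \eqref{eq: boundedgeodformulaonY} decrease), hence converges pointwise to some $w\in\PSH_{\textup{pr}_2^*\eta}(S\times Y)$ which is $r$-independent and satisfies $w_0\le u_0$, $w_1\le u_1$; therefore $w\le u$, the bounded geodesic for $(Y,\eta)$. For the reverse inequality, $u$ is $\textup{pr}_2^*(\eta+\veps_j\eta')$-psh (since $\PSH_{\textup{pr}_2^*\eta}\subset\PSH_{\textup{pr}_2^*(\eta+\veps_j\eta')}$) with $u_0\le u^j_0$, $u_1\le u^j_1$, so $u\le u^{\veps_j}$ for each $j$, giving $u\le w$. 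Hence $w=u$, proving (ii). (One should note the boundary values are attained in the appropriate sense because of the uniform $t$-Lipschitz bound for each $u^{\veps_j}$, but for the pointwise-limit statement this is not strictly needed.)

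For (iii), the strategy is to pass to the limit term by term in
$$d_1^{\eta+\veps_j\eta'}(u^j_0,u^j_1) = I_{\eta+\veps_j\eta'}(u^j_0) + I_{\eta+\veps_j\eta'}(u^j_1) - 2I_{\eta+\veps_j\eta'}\big(P_{\eta+\veps_j\eta'}(u^j_0,u^j_1)\big).$$
The main obstacle, and the step I would spend the most care on, is the convergence of the Monge--Amp\`ere energies as \emph{both} the potential and the reference class vary: Proposition \ref{prop: AM_mon_cont} handles monotone convergence of potentials within a \emph{fixed} class, but here $\eta+\veps_j\eta'$ is also moving. The plan is to control this by expanding $I_{\eta+\veps\eta'}(v)$ as a sum of mixed integrals $\int_Y v\,(\eta+\veps\eta')^k\wedge(\eta+\veps\eta' + i\ddbar v)^{n-k}$ (divided by the appropriate volume $V_{\eta+\veps\eta'}\to V_\eta$), and then for bounded $v=v^j\searrow v_0$ estimate each such mixed term: first absorb the $\veps_j\eta'$ factors (they contribute $O(\veps_j)$ since all potentials involved are uniformly bounded below by a fixed constant, by the monotonicity of the sequences and of envelopes, and the cohomological masses are bounded), reducing to $\int_Y v^j\,\eta^k\wedge\eta_{v^j}^{n-k}$, and then invoke monotone continuity of the non-pluripolar mixed Monge--Amp\`ere operator / the usual Bedford--Taylor convergence along decreasing sequences of bounded potentials within the fixed class $[\eta]$ (as used in \cite{begz}) to pass $j\to\infty$. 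Applying this to $v^j=u^j_0$, to $v^j=u^j_1$, and — crucially using part (i) — to $v^j=P_{\eta+\veps_j\eta'}(u^j_0,u^j_1)\searrow P_\eta(u_0,u_1)$, and recalling that $P_{\eta}$ here plays the role of $P_{\eta'}$ in the definition of $d_1^\eta$ only for bounded potentials where the two envelopes agree, we get that each of the three terms converges to the corresponding term of $d_1^\eta(u_0,u_1)$, yielding (iii). The one delicate point to verify is the uniform lower bound on all potentials appearing: since $u^j_0\searrow u_0$ with $u_0,u_1\in L^\infty$, for $j$ large $u^j_0\ge u_0 - 1 \ge -C$ and similarly for $u^j_1$ and their envelope, so this is harmless.
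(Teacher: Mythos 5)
Your proposal is correct, and for parts (i) and (iii) it follows essentially the same route as the paper; part (ii) is where you genuinely diverge. The paper identifies the decreasing limit $w=\lim_j u^{\veps_j}$ with the geodesic $u$ by first showing, via Bedford--Taylor convergence of Monge--Amp\`ere measures along the decreasing sequence, that $w$ solves $(\textup{pr}_2^*\eta+i\ddbar w)^{n+1}=0$ with uniformly $t$-Lipschitz boundary behaviour, and then invoking the comparison principle \cite[Theorem 21]{bl1} (extended to the semi-positive setting) to conclude $w=u$. You instead sandwich $w$ by two applications of the envelope formula \eqref{eq: boundedgeodformulaonY}: $w$ is a competitor for the envelope defining $u$, and $u$ is a competitor for each envelope defining $u^{\veps_j}$. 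This is cleaner, avoids extending the comparison principle beyond the K\"ahler case, and is legitimate since the paper records \eqref{eq: boundedgeodformulaonY} as a characterization of the bounded geodesic. (Both arguments, like the paper's, implicitly take $\veps_j$ decreasing, which is harmless.)

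One imprecision to repair in (iii): after discarding the $O(\veps_j)$ corrections you propose to reduce to $\int_Y v^j\,\eta^k\wedge\eta_{v^j}^{n-k}$ and then apply decreasing-sequence convergence ``within the fixed class $[\eta]$.'' But $v^j$ is only $(\eta+\veps_j\eta')$-psh, so $\eta_{v^j}=\eta+i\ddbar v^j$ need not be a positive current, and the mixed terms $\eta^k\wedge(\eta')^{l}\wedge\eta_{v^j}^{m}$ are not a priori measures whose mass is controlled by cohomology; the mass bound you invoke applies only to wedge products of positive currents. The fix is routine: either expand all products in the positive forms $(\eta+\veps_j\eta')_{v^j}$, $\eta$, $\eta'$, so that every error term is $\veps_j$ times a positive measure of cohomologically bounded mass, or pass to local potentials $\rho,\rho'\ge 0$ so that $\rho+\veps_j\rho'+v^j$ is a decreasing sequence of bounded psh functions and classical Bedford--Taylor convergence applies directly. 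With that repair, your term-by-term limit of the three energies (using part (i) for the envelope term, together with Proposition \ref{prop: AM_mon_cont}-type continuity) matches the paper's one-line appeal to Bedford--Taylor theory.
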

\begin{proof} As $\PSH_{\eta+ \varepsilon'\eta'}(Y) \subset \PSH_{\eta + \varepsilon \eta'}(Y)$ for $0 \leq \varepsilon' < \varepsilon$, it follows that $P_{\eta + \varepsilon_j \eta'}(u_0^j,u_1^j) \leq \min(u_0^j,u_1^j)$ is decreasing in $j$. Consequentially, the limit $h:=\lim_{j \to \infty}P_{\eta + \varepsilon_j \eta'}(u_0^j,u_1^j)$ is well defined and satisfies $h\in \PSH_\eta(Y), \ h \leq u_0,u_1$, implying (i).

Note that $u^{\varepsilon_j}(s,y)= u^{\varepsilon_j}_{\textup{Re }s}(y) \in \PSH_{\textup{pr}_2^*\eta + \varepsilon_j \textup{pr}_2^*\eta'}(S \times Y)$ is uniformly $t$-Lipschitz continuous independently of $j$ (see the arguments in \cite[Section 2.1]{brn1}) and solves the equation 
$$(\textup{pr}_2^* (\eta + \varepsilon_j \eta') + i \ddbar u^{\varepsilon_j})^{n+1}=0.$$ 
Clearly, $u^{\varepsilon_j} \in \PSH_{\textup{pr}_2^*\eta + \varepsilon\textup{pr}_2^*\eta'}(S \times Y)$ for any $\varepsilon$ satisfying $\varepsilon_j < \varepsilon $. By \eqref{eq: boundedgeodformulaonY}, it readily follows that $u^{\varepsilon_j}$ is decreasing in $j$. According to Bedford--Taylor theory \cite{bt}, the decreasing limit curve $t \to v_t= \lim_{j \to \infty} u^{\varepsilon_j}_t \in \PSH_{\eta}(Y)$ satisfies 
$$(\textup{pr}_2^* \eta + i \ddbar v)^{n+1}=0,$$
and we have $v_0=u_0$ and $v_1 = u_1$. Applying the comparison principle \cite[Theorem 21]{bl1} (whose proof from the K\"ahler case extends to our more general situation) for the $t$-Lispchitz potentials $v,u \in \PSH_
{\textup{pr}_2^*\eta}(S \times Y)$   we obtain that $v = u$, proving (ii).

To conclude (iii) we have to argue that 
\begin{flalign*}
I_\eta(u_0) + I_\eta(u_1) & - 2 I_\eta(P_\eta(u_0,u_1))=\\ =&\lim_{j \to \infty}\big(I_{\eta+ \varepsilon_j \eta'}(u_0^j) + I_{\eta+ \varepsilon_j \eta'}(u_1^j) - 2 I_{\eta+ \varepsilon_j \eta'}(P_{\eta+ \varepsilon_j \eta'}(u_0^j,u_1^j))\big),
\end{flalign*}
but this follows from classical Bedford--Taylor theory \cite{bt}, given the defining expression for $I_\eta$ \eqref{eq: AMdef_semipos} and the fact that $u^j_0 \searrow u_0$, $u^j_1 \searrow u_1$, $P_{\eta+\varepsilon_j \eta'}(u^j_0,u^j_1) \searrow P_{\eta}(u_0,u_1)$.
\end{proof}

Using the last statement of the previous proposition, it follows that $d^1_\eta$ is a pseudometric on $\PSH_\eta(Y) \cap L^\infty$. Now we argue that $d_1$ is actually a bona-fide metric, and we also look at its geodesics:

\begin{proposition} $(\PSH_\eta(Y) \cap L^\infty,d_1^\eta)$ is a geodesic metric space. Given $u_0,u_1 \in \PSH_\eta(Y) \cap L^\infty$, the weak geodesic $[0,1] \ni t \to u_t \in \PSH_\eta(Y) \cap L^\infty$ joining $u_0,u_1$ is a $d_1$-geodesic.
\end{proposition}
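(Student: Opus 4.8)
The plan is to transport everything from the smooth K\"ahler case via the approximation result of Proposition \ref{prop: lotsoflimits}. Recall that for a K\"ahler class the corresponding statement is already known: by Theorem \ref{thm: metrictheorem_Kahler} (applied to $(Y, \eta + \varepsilon \eta')$, which \emph{is} K\"ahler), $d_1^{\eta + \varepsilon \eta'}$ is a genuine metric, the weak geodesic is a $d_1^{\eta+\varepsilon\eta'}$-geodesic, and Proposition \ref{thm: da2properties}(i) gives the envelope-contraction estimate
$$d_1^{\eta+\varepsilon\eta'}(P_{\eta+\varepsilon\eta'}(v,u_0),P_{\eta+\varepsilon\eta'}(v,u_1)) \leq d_1^{\eta+\varepsilon\eta'}(u_0,u_1).$$

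First I would prove that $d_1^\eta$ is a metric. Symmetry and the triangle inequality pass through the limit in Proposition \ref{prop: lotsoflimits}(iii) for free (fix a common approximating sequence for each potential and take three of them at once, using monotonicity of $P$ in its entries). For non-degeneracy, suppose $d_1^\eta(u_0,u_1)=0$ with $u_0 \neq u_1$. Choose decreasing smooth approximants $\phi_j \searrow u_0$, $\psi_j \searrow u_1$ from $\pi^*\mathcal H_\o(X) \subset \PSH_{\eta+\varepsilon_j\eta'}(Y)$ (Theorem \ref{thm: EGZapprox}), so that $d_1^{\eta+\varepsilon_j\eta'}(\phi_j,\psi_j) \to 0$. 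Now use that $d_1^{\eta+\varepsilon_j\eta'}$ \emph{is} a metric dominating, say, a suitable $L^1$-type quantity (via the known inequalities of \cite{da2} comparing $d_1$ with $\int|u_0-u_1|\,(\text{MA-type measures})$); since the $\varepsilon_j$-dependence in those inequalities is harmless as $\varepsilon_j\to 0$ and the masses $V_{\eta+\varepsilon_j\eta'}\to V_\eta>0$ are bounded below, one extracts $\int_Y |u_0-u_1|\,\eta^n = 0$, hence $u_0 = u_1$ a.e., contradiction. Alternatively, and perhaps more cleanly, one invokes the estimate $d_1^{\eta+\varepsilon\eta'}(u,v) \geq C\big(I_{\eta+\varepsilon\eta'}(u) - I_{\eta+\varepsilon\eta'}(\max(u,v))\big) + \cdots$ with constants independent of $\varepsilon$; passing to the limit and using Proposition \ref{prop: AM_mon_cont} for the continuity of the energies gives the same conclusion.

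Next, the geodesic statement. Fix $u_0,u_1 \in \PSH_\eta(Y)\cap L^\infty$ and let $t\mapsto u_t$ be the weak geodesic, $t \mapsto u_t^{\varepsilon_j}$ the $(\eta+\varepsilon_j\eta')$-weak geodesics of $u_0^j,u_1^j$ as in Proposition \ref{prop: lotsoflimits}, so $u_t^{\varepsilon_j}\searrow u_t$ for every $t$. For fixed $s<t$ in $[0,1]$, apply Proposition \ref{prop: lotsoflimits}(iii) three times — to the pairs $(u_0,u_t)$, $(u_t,u_1)$, $(u_0,u_1)$ — noting that the restriction of a weak geodesic to a subinterval is again a weak geodesic, so the pieces $u_s^{\varepsilon_j}, u_t^{\varepsilon_j}$ arise as endpoints of sub-geodesics; since each $d_1^{\eta+\varepsilon_j\eta'}$ makes its weak geodesic a metric geodesic, we get $d_1^{\eta+\varepsilon_j\eta'}(u_s^{\varepsilon_j}, u_t^{\varepsilon_j}) = |t-s|\, d_1^{\eta+\varepsilon_j\eta'}(u_0^j,u_1^j)$, and letting $j\to\infty$ yields $d_1^\eta(u_s,u_t)=|t-s|\,d_1^\eta(u_0,u_1)$. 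Finally, this identity in particular forces $d_1^\eta$-continuity of $t\mapsto u_t$, which together with the existence of such a curve between any two points establishes that $(\PSH_\eta(Y)\cap L^\infty,d_1^\eta)$ is a geodesic metric space.

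The main obstacle I anticipate is the \emph{uniformity in $\varepsilon$} needed for non-degeneracy: one must be sure that the comparison inequalities between $d_1^{\eta+\varepsilon\eta'}$ and $L^1$-type distances from \cite{da2} do not degenerate as $\varepsilon\to 0$. This should be fine because those inequalities are ultimately consequences of the Bedford--Taylor monotonicity/comparison machinery with constants depending only on $n$ and $V_{\eta+\varepsilon\eta'}$, and $V_{\eta+\varepsilon\eta'} = \int_Y(\eta+\varepsilon\eta')^n \to \int_Y \eta^n > 0$, so the constants stay bounded. A secondary, more bookkeeping-level, point is checking that restrictions of the weak geodesics $u^{\varepsilon_j}$ to subintervals $[s,t]$ really are the weak geodesics between $u^{\varepsilon_j}_s, u^{\varepsilon_j}_t$ — but this is immediate from the envelope description \eqref{eq: boundedgeodformulaonY} and the maximum principle, exactly as in the K\"ahler case.
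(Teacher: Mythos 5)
Your treatment of the triangle inequality and of the geodesic property is essentially the paper's argument: both pass through the K\"ahler approximations $\eta+\varepsilon_j\eta'$ via Proposition \ref{prop: lotsoflimits}(iii) and the known linearity of $t\mapsto d_1^{\eta+\varepsilon\eta'}(u_0,u_t^{\varepsilon})$ from Theorem \ref{thm: metrictheorem_Kahler} (the paper phrases the limit as linearity of $t\mapsto d_1^\eta(u_l,u_t)$ for each $l$, you phrase it as $d_1^\eta(u_s,u_t)=|t-s|d_1^\eta(u_0,u_1)$; these are equivalent, and your subinterval remark is correct). Note that since $u_0,u_1$ are already bounded $\eta$-psh, you do not even need decreasing approximants of the endpoints here: the constant sequences $u_i^j=u_i$ work, which is what the paper uses.

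The genuine gap is in the non-degeneracy step, and it is not where you locate it. The difficulty is not the uniformity in $\varepsilon$ of the constants from \cite{da2}; it is that the estimate you invoke controls $\int_Y|\phi_j-\psi_j|\bigl((\eta+\varepsilon_j\eta')_{\phi_j}^n+(\eta+\varepsilon_j\eta')_{\psi_j}^n\bigr)$, i.e.\ the $L^1$-distance against the \emph{Monge--Amp\`ere measures of the potentials}, not against the fixed reference measure. Passing to the limit (even granting weak convergence of these measures and lower semicontinuity) would at best give $\int_Y|u_0-u_1|\,\eta_{u_0}^n=0$, and $u_0=u_1$ a.e.\ with respect to $\eta_{u_0}^n$ does not imply $u_0=u_1$ without a further domination-principle argument; the conclusion ``$\int_Y|u_0-u_1|\,\eta^n=0$'' is simply not what those inequalities yield. (The statement that $d_1^\eta$-convergence controls $L^1(\eta^n)$-convergence is Theorem \ref{thm: d_1convergencethm}(ii) of the paper, which is only established \emph{after} this proposition, so it cannot be used here either.) Your ``alternative'' estimate ending in ``$+\cdots$'' is too vague to close this. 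The paper avoids all of this by arguing directly from the definition: $d_1^\eta(u_0,u_1)=0$ forces, by monotonicity of $I_\eta$, that $I_\eta(u_0)=I_\eta(u_1)=I_\eta(P_\eta(u_0,u_1))$, and since $u_0,u_1\geq P_\eta(u_0,u_1)$ the energy domination principle (Proposition \ref{prop: energy_dom}: $u\geq v$ and $I_\eta(u)=I_\eta(v)$ imply $u=v$, proved by an integration-by-parts and Cauchy--Schwarz argument) gives $u_0=P_\eta(u_0,u_1)=u_1$. Some such substitute for Proposition \ref{prop: energy_dom} is exactly the ingredient your proposal is missing.
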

\begin{proof} According to the previous theorem $d_1^\eta(u_0,u_1)=\lim_{\varepsilon \to 0}d_1^{\eta+\varepsilon \eta'}(u_0,u_1)$, hence the triangle inequality holds for $d^\eta_1$.

If $d_1^\eta(u_0,u_1)=0$ then by the monotonicity of $I_\eta$ we obtain $I_\eta(u_0)=I_\eta(u_1)=I_\eta(P_\eta(u_0,u_1))$. Using the next proposition, this implies $u_0=P_\eta(u_0,u_1)=u_1$, hence $d_1^\eta$ is non-degenerate.

Suppose $[0,1] \ni t \to u_t^\varepsilon \in \PSH_{\eta + \varepsilon \eta'}(X)\cap L^\infty$ is the weak geodesic joining $u_0,u_1 \in \PSH_{\eta + \varepsilon \eta'}(X)\cap L^\infty$. Then by Theorem \ref{thm: metrictheorem_Kahler} we have that $t \to d_1^{\eta + \varepsilon \eta'}(u_0,u^\varepsilon_t), \ t \in [0,1]$ is linear. By Proposition \ref{prop: lotsoflimits}(iii) it follows that $t \to \lim_{\varepsilon \to 0}d_1^{\eta + \varepsilon \eta'}(u_0,u^\varepsilon_t)=d^\eta_1(u_0,u_t)$ is linear as well. One proves similarly that $t \to d_1^\eta(u_l,u_t)$ is linear for any $l \in [0,1]$, implying that $t \to u_t$ is indeed a $d_1^\eta$-geodesic.
\end{proof}

The following cocycle formula of the Aubin--Mabuchi energy is classical in the K\"ahler case and can be extended to $\mathcal E^1_\eta(X)$ without changes (see \cite{begz, bbegz}):
\begin{equation}\label{eq: AM_diff}
I_\eta(u)-I_\eta(v)= \frac{1}{(n+1) V} \sum_{j=0}^{n}\int_Y (u-v) \eta_u^{j} \wedge \eta_v^{n-j}, \ \  u,v \in \mathcal E^1_\eta(Y).\end{equation}
The following result is a kind of ``domination principle" for the Aubin--Mabuchi energy. It is a generalization of the analogous result from the K\"ahler case \cite[Proposition 4.2]{bdl1}:

\begin{proposition} \label{prop: energy_dom}Suppose $u,v \in \mathcal E^1_\eta(Y)$ with $u \geq v$. If $I_\eta(u)=I_\eta(v)$, then $u=v$.
\end{proposition}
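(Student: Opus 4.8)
The plan is to reduce to the classical Kähler domination principle \cite[Proposition 4.2]{bdl1} by the same approximation trick used throughout this section, namely embedding $\PSH_\eta(Y)$ into $\PSH_{\eta+\varepsilon\eta'}(Y)$ and letting $\varepsilon\to 0$. However, since $u,v$ are only assumed to be in $\mathcal E^1_\eta(Y)$ rather than bounded, and since \cite[Example 3.5]{dn} shows membership in $\mathcal E^1$ is not preserved under the class enlargement, I would first cut the potentials off at level $-k$ to land in $\PSH_\eta(Y)\cap L^\infty$, handle that bounded case, and only then pass $k\to\infty$.

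The key steps, in order. First, replace $u,v$ by $u^k=\max(u,-k)$ and $v^k=\max(v,-k)$; these are bounded, satisfy $u^k\ge v^k$, and by Proposition \ref{prop: AM_mon_cont} (continuity of $I_\eta$ along decreasing sequences) one has $I_\eta(u^k)\to I_\eta(u)$ and $I_\eta(v^k)\to I_\eta(v)$. Second, treat the bounded case: for $u_0,u_1\in\PSH_\eta(Y)\cap L^\infty$ with $u_0\ge u_1$ and $I_\eta(u_0)=I_\eta(u_1)$, use the cocycle formula \eqref{eq: AM_diff}, which gives
\begin{equation*}
0=I_\eta(u_0)-I_\eta(u_1)=\frac{1}{(n+1)V}\sum_{j=0}^{n}\int_Y (u_0-u_1)\,\eta_{u_0}^{j}\wedge\eta_{u_1}^{n-j}.
\end{equation*}
Every integrand is nonnegative, so each term vanishes; in particular $\int_Y (u_0-u_1)\,\eta_{u_1}^n=0$, hence $u_0=u_1$ almost everywhere with respect to $\eta_{u_1}^n$. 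Now invoke the domination principle on the smooth class: since $(Y,\eta+\varepsilon\eta')$ is Kähler and $u_0,u_1\in\PSH_\eta(Y)\cap L^\infty\subset\PSH_{\eta+\varepsilon\eta'}(Y)\cap L^\infty$, the Kähler-case statement \cite[Proposition 4.2]{bdl1} applies provided we know $u_0\le u_1$ on the complement of a pluripolar set off the support of $(\eta+\varepsilon\eta')_{u_1}^n$; but this requires knowing that $(\eta+\varepsilon\eta')_{u_1}^n$ dominates enough, which need not follow from $\eta_{u_1}^n$. Cleaner is to argue directly on $(Y,\eta)$: because $\eta_{u_1}^n$ puts no mass on pluripolar sets and the classical comparison/domination argument (Bedford--Taylor, using that $\{u_0>u_1\}$ would force $\int_{\{u_0>u_1\}}\eta_{u_1}^n>0$ via the maximum principle if it had positive capacity) goes through verbatim on the big class $[\eta]$, we get $u_0=u_1$. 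Third, combine: from the bounded case $u^k=v^k$ for every $k$, and letting $k\to\infty$ yields $u=v$.

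The main obstacle I anticipate is justifying the bounded-case domination step on the semipositive structure $(Y,\eta)$ rather than on a genuine Kähler manifold: one must check that the Bedford--Taylor comparison-principle machinery — in particular the fact that $\{u_0>u_1\}$ carries positive $\eta_{u_1}^n$-mass whenever it has positive non-pluripolar capacity — survives the degeneration of $\eta$. Since $[\eta]$ is big (by Demailly--P\u aun, as noted in the excerpt) and all the relevant measures are non-pluripolar, this should reduce to a local statement and hence hold, but it is the point where I would expect to have to either cite \cite{begz} carefully or give a short capacity argument. Everything else — the cutoff, the cocycle expansion, the monotone continuity of $I_\eta$ — is routine given the results already assembled.
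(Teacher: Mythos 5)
Your route is genuinely different from the paper's, and as written it has two gaps. First, the reduction to the bounded case does not go through: truncating to $u^k=\max(u,-k)$, $v^k=\max(v,-k)$ preserves the ordering $u^k\geq v^k$ but \emph{not} the hypothesis $I_\eta(u^k)=I_\eta(v^k)$ --- you only know $I_\eta(u^k)-I_\eta(v^k)\to 0$ --- so you cannot apply your bounded-case statement at each fixed $k$ and conclude $u^k=v^k$. (The truncation is also unnecessary: the cocycle formula \eqref{eq: AM_diff} is stated for all of $\mathcal E^1_\eta(Y)$, so $\int_Y(u-v)\eta_v^n=0$ follows directly; but then you need your second step for finite-energy, not bounded, potentials.) Second, and more seriously, the passage from ``$u=v$ a.e.\ with respect to $\eta_v^n$'' to ``$u=v$ everywhere'' is precisely the domination principle for the degenerate (merely big) class $[\eta]$, and you do not prove it --- you assert that the Bedford--Taylor capacity argument ``goes through verbatim.'' That assertion is the entire difficulty: the standard K\"ahler proof of the domination principle exploits strict positivity of the reference form, and you yourself correctly observed that passing to $\eta+\varepsilon\eta'$ does not help because the Monge--Amp\`ere measures change with $\varepsilon$. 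Also note that \cite[Proposition 4.2]{bdl1} is the K\"ahler analogue of the proposition itself (an energy statement), not the measure-theoretic domination principle you invoke.

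The paper sidesteps all of this with a self-contained integration-by-parts argument: from $\int_Y(u-v)\,\eta_u^j\wedge\eta_v^{n-j}=0$ for all $j$ one deduces $\int_Y i\partial(u-v)\wedge\bar\partial(u-v)\wedge\eta_u^j\wedge\eta_v^{n-j-1}=0$, then inductively trades the factors $\eta_u,\eta_v$ for $\eta$ using Cauchy--Schwarz, arriving at $\int_Y i\partial(u-v)\wedge\bar\partial(u-v)\wedge\eta^{n-1}=0$. Since $\eta=\pi^*\o$ is nondegenerate off a pluripolar (analytic) set, $u-v$ is constant, and $I_\eta(u)=I_\eta(v)$ forces the constant to vanish. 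To salvage your approach you would have to actually establish the domination principle for full-mass finite-energy potentials in the big class $[\eta]$; the gradient argument is the more economical path here.
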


\begin{proof}
Since $u \geq v$ it follows from \eqref{eq: AM_diff} that
\begin{equation}\label{egy}
\int_Y (u - v)\eta_u^j\wedge \eta_v^{n-j}=0, \ j = 0,...,n.
\end{equation}
As $\eta$ is degenerate only on a pluripolar set, we are finished if we can prove that
\begin{equation}\label{main_id}
\int_Y i\partial(u - v)\wedge \bar\partial(u - v)\wedge \eta^{n-1}=0.
\end{equation}
The first step is to prove that
\begin{equation}\label{main_id1}
\int_Y i \partial(u - v)\wedge\bar\partial(u - v)\wedge \eta \wedge \eta_u^j\wedge \eta_v^{n-j-2}=0, \ j = 0,...,n-2.
\end{equation}
It follows from (\ref{egy}) that
\begin{flalign}\label{CauchySchwarz}
0=&\int_Y (u - v)i\partial\bar\partial(u - v)\wedge \eta_u^j\wedge \eta_v^{n-j-1} \nonumber\\
 =&-\int_Y i \partial(u - v)\wedge \bar\partial(u - v)\wedge \eta_u^j\wedge \eta_v^{n-j-1}, \ j = 0,...,n-1.
\end{flalign}
Using this we now write
\begin{flalign*}
&\int_Y i \partial(u - v)\wedge \bar\partial(u - v)\wedge\eta\wedge\eta_u^j\wedge \eta_v^{n-j-2}=\\
&=-\int_Y i \partial(u - v)\wedge \bar\partial(u - v)\wedge i\partial \bar \partial v\wedge\eta_u^j\wedge \eta_v^{n-j-2}\\
&=\int_Y i \partial(u - v)\wedge i\partial\bar\partial(u - v)\wedge \bar \partial v\wedge\eta_u^j\wedge \eta_v^{n-j-2}\\
&=\int_Y i \partial(u - v)\wedge \bar \partial v\wedge \eta_u\wedge\eta_u^j\wedge \eta_v^{n-j-2}-\\
&-\int_Y i \partial(u - v)\wedge \bar \partial v\wedge \eta_v\wedge\eta_u^j\wedge \eta_v^{n-j-2}, \ j = 0,...,n-2.
\end{flalign*}
Using the Cauchy-Schwarz inequality it readily follows from \eqref{CauchySchwarz} that both of the terms in this last difference are zero, proving (\ref{main_id1}). Continuing this inductive process we arrive at (\ref{main_id}).
\end{proof}

It is possible to extend the metric structure from $(\PSH_\eta(Y) \cap L^\infty,d_1^\eta)$ to $(\mathcal E^1_\eta(Y),d^\eta_1)$ and we do this now. Indeed, given $u_0, u_1 \in \mathcal E^1_\eta(Y)$, one can introduce:
\begin{equation}\label{eq: d_1sequence_ext}
d_1^\eta(u_0,u_1):= \lim_{j \to \infty}d_1^\eta(\max(u_0,-j),\max(u_1,-j)).
\end{equation}
A few comments are in order about why this limit exsits. By definition of $\mathcal E^1_\eta(Y)$, the Aubin--Mabuchi energy extends with finite values to $\mathcal E^1_\eta(Y)$. %By Proposition \ref{} \cite{begz}, we also know that for any $u,u_j \in \mathcal E^1_\eta(Y)$ such that $u_j \searrow u$ pointwise, we have $I(u_j) \searrow I(u)$. 
By Theorem \ref{thm:rooftopenergystable} the operation $(u_0,u_1) \to P_\eta(u_0,u_1)$ is closed inside $\mathcal E^1_\eta(Y)$ and also $P_\eta (\max(u_0,-j),\max(u_1,-j)) \searrow P_\eta(u_0,u_1)$. Putting everything together, Proposition \ref{prop: AM_mon_cont} implies that the limit in \eqref{eq: d_1sequence_ext}  exists and equals the following explicit expression:
\begin{equation}\label{eq: d_1explicit_ext}
d_1^\eta(u_0,u_1)= I_\eta(u_0) + I_\eta(u_1)-2I_\eta(P_\eta(u_0,u_1)), \ u_0,u_1 \in \mathcal E^1_\eta(Y).
\end{equation}
Before we proceed, let us recall that the finite energy geodesics connecting $u_0,u_1 \in \mathcal E^1_\eta(X)$ is just the decreasing limit of  the bounded geodesics $t \to u^j_t$ connecting $u^j_0 := \max(u_0,-j), \ u^j_0 := \max(u_0,-j)$. It is straightforward to verify that for $u$ thus defined, the upper envelope formula \eqref{eq: boundedgeodformulaonY} also holds. As it will be clear in a moment, this curve actually stays inside $\mathcal E^1_\eta(Y)$:

\begin{proposition}\label{prop: E1extension} With the explicit extension \eqref{eq: d_1explicit_ext} the structure $(\mathcal E^1_\eta(Y),d_1^\eta)$ is a geodesic metric space. Given $u_0,u_1 \in \mathcal E^1_\eta(Y)$, the finite energy geodesic $[0,1] \ni t \to u_t \in \mathcal E^1_\eta(Y)$ joining $u_0,u_1$ is a $d_1^\eta$-geodesic.
\end{proposition}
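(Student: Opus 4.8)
The plan is to reduce everything to the bounded case via the approximation $u_i^k := \max(u_i,-k)$, exactly as was done to define $d_1^\eta$ in \eqref{eq: d_1sequence_ext}, and to use that the bounded truncations converge both metrically and as curves. First I would verify that $(\mathcal E^1_\eta(Y),d_1^\eta)$ is a metric space. Symmetry is immediate from \eqref{eq: d_1explicit_ext}. The triangle inequality follows by passing to the limit in $k$: for $u_0,u_1,u_2 \in \mathcal E^1_\eta(Y)$ one has $d_1^\eta(\max(u_0,-k),\max(u_2,-k)) \le d_1^\eta(\max(u_0,-k),\max(u_1,-k)) + d_1^\eta(\max(u_1,-k),\max(u_2,-k))$ by the bounded case (the previous proposition), and letting $k \to \infty$ gives the inequality for $d_1^\eta$ on $\mathcal E^1_\eta(Y)$ by \eqref{eq: d_1sequence_ext}. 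For non-degeneracy, suppose $d_1^\eta(u_0,u_1)=0$. By \eqref{eq: d_1explicit_ext} and monotonicity of $I_\eta$ we get $I_\eta(u_0)=I_\eta(P_\eta(u_0,u_1))$ with $u_0 \ge P_\eta(u_0,u_1)$, and $u_0,P_\eta(u_0,u_1) \in \mathcal E^1_\eta(Y)$ by Theorem \ref{thm:rooftopenergystable}; Proposition \ref{prop: energy_dom} then forces $u_0 = P_\eta(u_0,u_1)$, and likewise $u_1 = P_\eta(u_0,u_1)$, so $u_0=u_1$.

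Next I would handle the geodesic statement. Fix $u_0,u_1 \in \mathcal E^1_\eta(Y)$, set $u_i^k = \max(u_i,-k)$, let $t \mapsto u_t^k$ be the bounded geodesics connecting $u_0^k,u_1^k$, and let $t \mapsto u_t = \lim_k u_t^k$ be the finite energy geodesic. As remarked in the text, $t \mapsto u_t^k$ is decreasing in $k$ (this follows from the envelope formula \eqref{eq: boundedgeodformulaonY}, since enlarging $k$ enlarges the competitor set only in a way that stays below the smaller boundary data — more precisely $u_0^{k+1}\le u_0^k$, $u_1^{k+1}\le u_1^k$ force the envelope to decrease), hence $u_t = \lim_k u_t^k$ exists. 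The first point to check is that $u_t \in \mathcal E^1_\eta(Y)$ for each $t$. I would use the previous proposition: each $u_t^k$ satisfies $d_1^\eta(u_0^k,u_t^k) = t\, d_1^\eta(u_0^k,u_1^k)$, and the right-hand side is bounded (it converges to $t\, d_1^\eta(u_0,u_1)<\infty$). Writing out $d_1^\eta(u_0^k,u_t^k) = I_\eta(u_0^k)+I_\eta(u_t^k)-2I_\eta(P_\eta(u_0^k,u_t^k))$, using $P_\eta(u_0^k,u_t^k) \le u_t^k \le u_0^k$ and the monotonicity of $I_\eta$, one extracts a uniform lower bound on $I_\eta(u_t^k)$; since $u_t^k \searrow u_t$, Proposition \ref{prop: AM_mon_cont} gives $I_\eta(u_t) = \lim_k I_\eta(u_t^k) > -\infty$, i.e. $u_t \in \mathcal E^1_\eta(Y)$.

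It then remains to show $d_1^\eta(u_l,u_t) = |t-l|\, d_1^\eta(u_0,u_1)$ for all $l,t \in [0,1]$. The clean way is to show $d_1^\eta(u_l,u_t) = \lim_k d_1^\eta(u_l^k,u_t^k)$ and invoke the bounded geodesic case. For this I need: (a) $u_l^k \searrow u_l$ and $u_t^k \searrow u_t$, which hold by construction; (b) $P_\eta(u_l^k,u_t^k) \searrow P_\eta(u_l,u_t)$, which follows because $P_\eta(u_l^k,u_t^k)$ is decreasing in $k$, bounded above by $\min(u_l,u_t)$, and its limit is $\eta$-psh and $\le u_l, u_t$ while dominating $P_\eta(u_l,u_t)$, hence equals it; and then (c) apply Proposition \ref{prop: AM_mon_cont} three times to pass $I_\eta$ through the decreasing limits in \eqref{eq: d_1explicit_ext}. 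Combining, $d_1^\eta(u_l,u_t) = \lim_k d_1^\eta(u_l^k,u_t^k) = \lim_k |t-l|\, d_1^\eta(u_0^k,u_1^k) = |t-l|\, d_1^\eta(u_0,u_1)$ by the previous proposition, which is exactly the claim that $t \mapsto u_t$ is a $d_1^\eta$-geodesic; in particular $(\mathcal E^1_\eta(Y),d_1^\eta)$ is a geodesic metric space.

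I expect the main obstacle to be step (a)/(b) of the last paragraph — namely pinning down that the truncated envelopes and truncated geodesics decrease to the right limits — together with the monotonicity $u_t^{k+1} \le u_t^k$ of the bounded geodesics, since these rely on the envelope characterization \eqref{eq: boundedgeodformulaonY} and Bedford–Taylor convergence rather than on a direct PDE argument. Once these monotone convergences are in place, the energy bookkeeping via Proposition \ref{prop: AM_mon_cont} and the reduction to the bounded geodesic case are routine.
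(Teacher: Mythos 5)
Your proposal is correct and follows essentially the same route as the paper: reduce everything to the bounded truncations, get non-degeneracy from Proposition \ref{prop: energy_dom}, and pass the linearity of the energy/distance along bounded geodesics to the limit via Proposition \ref{prop: AM_mon_cont}. One harmless slip: the inequality $u_t^k \le u_0^k$ you invoke is false in general (a geodesic need not lie below its endpoints), but your argument only needs $P_\eta(u_0^k,u_t^k)\le u_t^k$, which is immediate, so the uniform lower bound on $I_\eta(u_t^k)$ still follows.
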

\begin{proof}The triangle inequality for $d_1^\eta$ follows from \eqref{eq: d_1sequence_ext}. The non-degeneracy of $d_1^\eta$ follows again from Proposition \ref{prop: energy_dom}. 

Suppose $t \to u^j_t$ is the bounded geodesic connecting $\max(u_0,-j)$ and $\max(u_1,-j)$ for all $j$. As $u^j_t \searrow u_t, \ t \in [0,1]$ and $t \to I_\eta(u^j_t)$ is linear, by Proposition \ref{prop: AM_mon_cont} it follows that $t \to I_\eta(u_t)$ is linear or maybe identically equal to $-\infty$. Because $u_0,u_1 \in \mathcal E_\eta(Y)$, the latter case cannot happen as $I_\eta(u_0),I_\eta(u_1)>0$. So $I_\eta(u_t)> -\infty$, giving $u_t \in \mathcal E^1_\eta(Y)$.

Since $t \to d_1^\eta(u^j_0,u^j_t)$ is also linear, by taking again a limit, and using Proposition \ref{prop: AM_mon_cont}, it follows that so is $t \to d_1^\eta(u_0,u_t)$. One proves similarly that $t \to d_1^\eta(u_l,u_t)$ is linear for any $l \in [0,1]$, implying that $t \to u_t$ is a $d_1^\eta$-geodesic.
\end{proof}

All that remains is to show that $(\mathcal E^1_\eta(Y),d_1^\eta)$ is complete.  For this we need the following lemma that has independent interest:

\begin{lemma} \label{lemma: d_1monotonone_conv} 
(i) Suppose $u_j \in \mathcal E^1_\eta(Y)$ decreases or increases a.e. to $u \in \mathcal E^1_\eta(Y)$. Then $d_1^\eta(u_j,u) \to 0$.\\
(ii) Suppose $u_j \in \mathcal E^1_\eta(Y)$ is decreasing or increasing a.e. and $d_1^\eta(0,u_j)$ is bounded. Then $u=\lim_j u_j \in \mathcal E^1_\eta(Y)$ and $d_1^\eta(u_j,u) \to 0$.
\end{lemma}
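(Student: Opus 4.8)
\textbf{Proof proposal for Lemma \ref{lemma: d_1monotonone_conv}.}

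The plan is to leverage the explicit formula \eqref{eq: d_1explicit_ext} together with the monotone continuity of the Aubin--Mabuchi energy (Proposition \ref{prop: AM_mon_cont}) and the rooftop stability result (Theorem \ref{thm:rooftopenergystable}). For part (i), consider first the decreasing case: if $u_j \searrow u$ with all terms in $\mathcal E^1_\eta(Y)$, then $P_\eta(u_j,u) = u$ for every $j$ since $u \leq u_j$, so by \eqref{eq: d_1explicit_ext} we get $d_1^\eta(u_j,u) = I_\eta(u_j) + I_\eta(u) - 2I_\eta(u) = I_\eta(u_j) - I_\eta(u)$. Since $u_j \searrow u$, Proposition \ref{prop: AM_mon_cont} gives $I_\eta(u_j) \to I_\eta(u)$, hence $d_1^\eta(u_j,u) \to 0$. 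For the increasing case, $u_j \nearrow u$ a.e. with $u_j, u \in \mathcal E^1_\eta(Y)$: now $P_\eta(u_j,u) = u_j$, so $d_1^\eta(u_j,u) = I_\eta(u) - I_\eta(u_j)$, and again Proposition \ref{prop: AM_mon_cont} applies to the sequence increasing a.e. to $u$. So part (i) is essentially immediate from the tools already assembled.

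For part (ii), I would first establish that the monotone limit $u = \lim_j u_j$ lies in $\mathcal E^1_\eta(Y)$. In the decreasing case, $I_\eta(u_j)$ is itself decreasing (by monotonicity of $I_\eta$), and boundedness of $d_1^\eta(0,u_j) = I_\eta(0) + I_\eta(u_j) - 2I_\eta(P_\eta(0,u_j))$ must be used to prevent $I_\eta(u_j) \to -\infty$. The key point is that $P_\eta(0,u_j) \leq 0$, so $I_\eta(P_\eta(0,u_j)) \leq I_\eta(0)$, whence $d_1^\eta(0,u_j) \geq I_\eta(u_j) - I_\eta(0)$; thus $I_\eta(u_j)$ is bounded below, and being monotone it converges to a finite limit. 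Then Proposition \ref{prop: AM_mon_cont} forces $I_\eta(u) = \lim_j I_\eta(u_j) > -\infty$, i.e. $u \in \mathcal E^1_\eta(Y)$. In the increasing case, $I_\eta(u_j)$ is increasing and bounded above by $I_\eta(u) \leq \sup_j I_\eta(u_j) + $ (something controlled), but more directly: $u_j \nearrow u$ with $u$ bounded above by any of the $u_j$ plus constants is not quite it — rather, one observes $u$ is a well-defined $\eta$-psh function as an a.e.-increasing limit (taking usc regularization), and boundedness of $d_1^\eta(0,u_j)$ combined with $P_\eta(0,u_j)$ being squeezed gives a uniform lower bound on $I_\eta$, so by Theorem \ref{thm: E1eqvdef} or Proposition \ref{prop: AM_mon_cont} we land in $\mathcal E^1_\eta(Y)$. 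Once $u \in \mathcal E^1_\eta(Y)$ is known, part (i) immediately yields $d_1^\eta(u_j,u) \to 0$.

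The main obstacle I anticipate is the increasing case of part (ii): showing the a.e.-increasing limit genuinely defines an element of $\mathcal E^1_\eta(Y)$ requires care because the naive pointwise supremum need not be usc, so one works with $u = (\sup_j u_j)^*$ and must check $u = \lim_j u_j$ a.e. (standard, via Hartogs-type lemmas, since the $u_j$ are $\eta$-psh and uniformly bounded in $L^1$ thanks to the $d_1^\eta$ bound controlling $\int |u_j| \eta^n$-type quantities through $I_\eta$). One also needs that $P_\eta(0,u_j)$ does not degenerate — here the inequality $P_\eta(0,u_j) \geq P_\eta(0,u_1)$ for $j \geq 1$ in the increasing case (and the reverse monotonicity in the decreasing case) keeps $I_\eta(P_\eta(0,u_j))$ bounded via Theorem \ref{thm:rooftopenergystable} applied to $P_\eta(0,u_1) \in \mathcal E^1_\eta(Y)$. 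Assembling these bounds, the finiteness of $\lim_j I_\eta(u_j)$ follows, and the rest is a direct application of part (i). I would present the decreasing case in full and remark that the increasing case is analogous with the monotonicity directions reversed, citing Proposition \ref{prop: AM_mon_cont} for both directions of monotone continuity.
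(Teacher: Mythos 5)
Your overall strategy coincides with the paper's: part (i) reduces to the observation that $w\le v$ implies $P_\eta(w,v)=w$, hence $d_1^\eta(w,v)=I_\eta(v)-I_\eta(w)$, and Proposition \ref{prop: AM_mon_cont} finishes; part (ii) reduces to bounding $I_\eta(u_j)$ by $d_1^\eta(0,u_j)$ and invoking monotone continuity of $I_\eta$ again. The paper states this compactly as $|I_\eta(u_j)|\le d_1^\eta(0,u_j)$.

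There is, however, a directional slip in what you call the ``key point'' of part (ii). In the decreasing case you need a \emph{lower} bound on $I_\eta(u_j)$, but the inequality you derive, $d_1^\eta(0,u_j)\ge I_\eta(u_j)-I_\eta(0)$ (coming from $P_\eta(0,u_j)\le 0$, hence $I_\eta(P_\eta(0,u_j))\le I_\eta(0)$), is an \emph{upper} bound on $I_\eta(u_j)$; the conclusion ``thus $I_\eta(u_j)$ is bounded below'' does not follow from it. The correct lower bound uses the other half of the rooftop inequality: since $P_\eta(0,u_j)\le u_j$, monotonicity gives $I_\eta(P_\eta(0,u_j))\le I_\eta(u_j)$, so
\begin{equation*}
d_1^\eta(0,u_j)=I_\eta(0)+I_\eta(u_j)-2I_\eta(P_\eta(0,u_j))\ \ge\ I_\eta(0)-I_\eta(u_j),
\end{equation*}
i.e.\ $I_\eta(u_j)\ge I_\eta(0)-d_1^\eta(0,u_j)$. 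Combining both directions yields $|I_\eta(u_j)-I_\eta(0)|\le d_1^\eta(0,u_j)$, which is exactly the paper's estimate and handles the decreasing and increasing cases simultaneously (the upper bound you did derive is the one needed in the increasing case). With this one-line repair the rest of your argument goes through; your auxiliary remarks about the usc regularization of the increasing limit and about $P_\eta(0,u_1)\in\mathcal E^1_\eta(Y)$ are fine but not needed once the two-sided bound on $I_\eta(u_j)$ is in hand, since Proposition \ref{prop: AM_mon_cont} already covers a.e.\ increasing limits.
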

\begin{proof} When $w,v \in \mathcal E^1_\eta(Y)$ satisfies $w \leq v$, observe that $d_1^\eta(w,v)=I_\eta(v)-I_\eta(w)$. By 
Proposition \ref{prop: AM_mon_cont} the result then follows.

For part (ii), as $|I_\eta({u_j})| \leq d_1^\eta(0,u_j)$ it follows that $I_\eta({u_j})$ is bounded.  Since, $I_\eta(u)=\lim_j I_\eta(u_j)$ we get that $u \in \mathcal E^1_\eta(Y)$ and part (i) yields the last part of the conclusion.
\end{proof}
As in the smooth K\"ahler case, the operator $u \to P_\eta(v,u)$ is $d_1^\eta$-contractive:
\begin{proposition} \label{prop: contractivity}Suppose $v,u_0,u_1 \in \mathcal E^1_\eta(Y)$. Then we have $d_1^\eta(P_\eta(v,u_0),P_\eta(v,u_1)) \leq d_1^\eta(u_0,u_1).$
\end{proposition}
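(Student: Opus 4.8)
The plan is to reduce the statement to the case of bounded potentials, and then deduce that case from the Kähler result Proposition~\ref{thm: da2properties}(i) through the approximation of $\eta$ by the Kähler forms $\eta + \varepsilon\eta'$.

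\emph{Reduction to bounded potentials.} Set $u_0^k=\max(u_0,-k)$, $u_1^k=\max(u_1,-k)$, $v^k=\max(v,-k)$, all lying in $\PSH_\eta(Y)\cap L^\infty$. Since $P_\eta$ is monotone in each argument, $P_\eta(v^k,u_l^k)\searrow P_\eta(v,u_l)$ pointwise for $l=0,1$ (the decreasing limit is $\eta$-psh, is $\le v,u_l$, hence $\le P_\eta(v,u_l)$, while the reverse inequality is clear), and by Theorem~\ref{thm:rooftopenergystable} all these envelopes lie in $\mathcal E^1_\eta(Y)$. By Lemma~\ref{lemma: d_1monotonone_conv}(i) and the triangle inequality, $d_1^\eta(u_0^k,u_1^k)\to d_1^\eta(u_0,u_1)$ and $d_1^\eta(P_\eta(v^k,u_0^k),P_\eta(v^k,u_1^k))\to d_1^\eta(P_\eta(v,u_0),P_\eta(v,u_1))$. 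Hence it suffices to prove the asserted inequality when $u_0,u_1,v\in\PSH_\eta(Y)\cap L^\infty$.

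\emph{The bounded case.} Fix a decreasing sequence $\varepsilon_j\searrow 0$. For each $j$ the class $\eta+\varepsilon_j\eta'$ is Kähler and $u_0,u_1,v\in\PSH_{\eta+\varepsilon_j\eta'}(Y)\cap L^\infty$, so Proposition~\ref{thm: da2properties}(i) gives
\begin{equation*}
d_1^{\eta+\varepsilon_j\eta'}\bigl(P_{\eta+\varepsilon_j\eta'}(v,u_0),\,P_{\eta+\varepsilon_j\eta'}(v,u_1)\bigr)\ \le\ d_1^{\eta+\varepsilon_j\eta'}(u_0,u_1).
\end{equation*}
For the right-hand side, Proposition~\ref{prop: lotsoflimits}(iii) applied with the constant sequences $u_0^j\equiv u_0$, $u_1^j\equiv u_1$ yields $d_1^{\eta+\varepsilon_j\eta'}(u_0,u_1)\to d_1^\eta(u_0,u_1)$. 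For the left-hand side, since $\PSH_{\eta+\varepsilon'\eta'}\subset\PSH_{\eta+\varepsilon\eta'}$ for $\varepsilon'<\varepsilon$, the envelopes $P_{\eta+\varepsilon_j\eta'}(v,u_l)$ are decreasing in $j$ and, by Proposition~\ref{prop: lotsoflimits}(i), decrease pointwise to $P_\eta(v,u_l)$ for $l=0,1$; each of them lies in $\PSH_{\eta+\varepsilon_j\eta'}(Y)\cap L^\infty$ (bounded above by $\min(v,u_l)$ and below by a constant, which is $\eta$-psh since $\eta\ge 0$). Thus Proposition~\ref{prop: lotsoflimits}(iii), applied now with $u_l^j:=P_{\eta+\varepsilon_j\eta'}(v,u_l)$, gives
\begin{equation*}
d_1^{\eta+\varepsilon_j\eta'}\bigl(P_{\eta+\varepsilon_j\eta'}(v,u_0),\,P_{\eta+\varepsilon_j\eta'}(v,u_1)\bigr)\ \longrightarrow\ d_1^\eta\bigl(P_\eta(v,u_0),\,P_\eta(v,u_1)\bigr).
\end{equation*}
Letting $j\to\infty$ in the displayed inequality yields $d_1^\eta(P_\eta(v,u_0),P_\eta(v,u_1))\le d_1^\eta(u_0,u_1)$, which together with the reduction above finishes the proof.

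\emph{Main obstacle.} The only genuine subtlety is the failure of the inclusion $\mathcal E^1_\eta(Y)\subset\mathcal E^1_{\eta+\varepsilon\eta'}(Y)$ (cf. \cite[Example 3.5]{dn}): one cannot run the Kähler approximation directly on finite-energy potentials, so the truncation step and the transfer back to $\mathcal E^1_\eta(Y)$ via the monotone $d_1$-continuity of Lemma~\ref{lemma: d_1monotonone_conv} are essential. Everything else is bookkeeping with the monotone convergence of envelopes and the continuity statement of Proposition~\ref{prop: lotsoflimits}.
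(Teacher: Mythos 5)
Your proof is correct and follows essentially the same route as the paper: reduce to bounded potentials by truncation and monotone $d_1^\eta$-continuity, then pass to the limit $\varepsilon\to 0$ in the K\"ahler contractivity estimate of Proposition~\ref{thm: da2properties}(i) for the classes $\eta+\varepsilon\eta'$. The only difference is presentational --- you invoke Proposition~\ref{prop: lotsoflimits} to justify the limit of the envelopes and distances where the paper cites the definition of $I_\eta$ and Bedford--Taylor theory directly, and you carry out the truncation step in slightly more detail.
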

\begin{proof} Suppose first that $v,u_0,u_1 \in \PSH_\eta(Y) \cap L^\infty$. By Proposition \ref{thm: da2properties}, we have
$$d_1^{\eta+\varepsilon \eta'}(P_{\eta+\varepsilon \eta'}(v,u_0),P_{\eta+\varepsilon \eta'}(v,u_1)) \leq d_1^{\eta_+\varepsilon \eta'}(u_0,u_1).$$
Letting $\varepsilon \to 0$, using the definition of $I_\eta$ and classical Bedford--Taylor theory, we conclude that inequality of the proposition holds for bounded potentials. For $v,u_0,u_1 \in \mathcal E^1_\eta(Y)$, we have 
\begin{flalign*}
d^\eta_1(P_\eta(\max(v,-j),\max(u_0,-j)),& P_\eta(\max(v,-j),\max(u_1,-j)) ) \\
&\leq d^\eta_1(\max(u_0,-j),\max(u_1,-j)).
\end{flalign*}
Taking the limit $j \to \infty$, by Proposition \ref{prop: AM_mon_cont} the result follows. 
\end{proof} 

\begin{proposition} $(\mathcal E^1_\eta(Y),d^\eta_1)$ is a complete geodesic metric space.\label{prop: E1complete}
\end{proposition}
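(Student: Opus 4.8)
The plan is to show that every $d_1^\eta$-Cauchy sequence in $\mathcal E^1_\eta(Y)$ converges, following the strategy of the K\"ahler case in \cite{da2} but replacing all Finsler-geometric input by the tools already assembled above: Proposition \ref{prop: AM_mon_cont} (monotone continuity of $I_\eta$), Theorem \ref{thm:rooftopenergystable} (energy-stability of rooftop envelopes), Proposition \ref{prop: contractivity} (contractivity of $u\mapsto P_\eta(v,u)$), and Lemma \ref{lemma: d_1monotonone_conv} (convergence along monotone sequences). First I would pass to a subsequence $\{u_j\}$ with $d_1^\eta(u_j,u_{j+1}) \leq 2^{-j}$; it suffices to produce a limit for this subsequence. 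The key device is to sandwich the sequence between a decreasing and an increasing sequence that both have bounded energy. For the decreasing side, set $w_j := P_\eta(u_j,u_{j+1},u_{j+2},\dots)$, the (decreasing-limit) rooftop envelope of the tail; using Theorem \ref{thm:rooftopenergystable} iteratively together with Proposition \ref{prop: contractivity} one bounds $d_1^\eta(u_j,w_j)$ by $\sum_{k\geq j}d_1^\eta(u_k,u_{k+1}) \leq 2^{-j+1}$, so in particular $w_j \in \mathcal E^1_\eta(Y)$, $d_1^\eta(0,w_j)$ is bounded, and $d_1^\eta(u_j,w_j)\to 0$. Since $w_j$ is increasing in $j$ (larger tails give larger envelopes) and a.e.\ bounded in energy, Lemma \ref{lemma: d_1monotonone_conv}(ii) gives $u := \lim_j w_j \in \mathcal E^1_\eta(Y)$ with $d_1^\eta(w_j,u)\to 0$. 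Combining, $d_1^\eta(u_j,u) \leq d_1^\eta(u_j,w_j)+d_1^\eta(w_j,u)\to 0$, which proves completeness; that $(\mathcal E^1_\eta(Y),d_1^\eta)$ is geodesic was already established in Proposition \ref{prop: E1extension}.

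The main obstacle is making rigorous the infinite rooftop envelope $w_j = P_\eta(u_j,u_{j+1},\dots)$ and the energy estimate $d_1^\eta(u_j,w_j)\leq 2^{-j+1}$. I would define it as the decreasing limit $w_j = \lim_{N\to\infty} P_\eta(u_j,\dots,u_N)$, where the finite envelopes are built by iterating the two-function envelope $P_\eta(\cdot,\cdot)$, which is legitimate by Theorem \ref{thm:rooftopenergystable}. The estimate then comes from a telescoping argument: writing $\phi_N := P_\eta(u_j,\dots,u_N)$ one has $\phi_{N+1} = P_\eta(\phi_N, u_{N+1})$, and since $\phi_N \leq u_N$, applying Proposition \ref{prop: contractivity} with $v=\phi_N$ together with $d_1^\eta(u_N, u_{N+1})\leq 2^{-N}$ and $d_1^\eta(\phi_N, P_\eta(\phi_N,u_N)) = d_1^\eta(\phi_N,\phi_N)=0$ yields $d_1^\eta(\phi_N,\phi_{N+1}) = d_1^\eta(P_\eta(\phi_N,u_N),P_\eta(\phi_N,u_{N+1})) \leq d_1^\eta(u_N,u_{N+1}) \leq 2^{-N}$. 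Summing over $N\geq j$ and using the triangle inequality plus $d_1^\eta(u_j,\phi_j) = d_1^\eta(u_j,u_j)=0$ gives $d_1^\eta(u_j,\phi_N)\leq \sum_{N'=j}^{N-1} 2^{-N'} < 2^{-j+1}$, and letting $N\to\infty$ with Proposition \ref{prop: AM_mon_cont} (the $\phi_N$ decrease, so their energies converge to that of $w_j$, and likewise $I_\eta(P_\eta(u_j,\phi_N))\to I_\eta(P_\eta(u_j,w_j))$) passes the bound to $d_1^\eta(u_j,w_j) \leq 2^{-j+1}$.

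It remains only to check the two monotonicity facts used above: that $N\mapsto \phi_N = P_\eta(u_j,\dots,u_N)$ is decreasing (immediate, since adding a constraint can only lower the sup) so the limit $w_j$ exists in $\psh_\eta(Y)$; and that $j\mapsto w_j$ is increasing (the tail $\{u_j,u_{j+1},\dots\}$ shrinks, so its infimum-type envelope grows), so Lemma \ref{lemma: d_1monotonone_conv}(ii) applies once we know $\sup_j d_1^\eta(0,w_j)<\infty$ — and that bound follows from $d_1^\eta(0,w_j)\leq d_1^\eta(0,u_j)+d_1^\eta(u_j,w_j)$ together with the fact that the original Cauchy sequence satisfies $\sup_j d_1^\eta(0,u_j)<\infty$. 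Finally $d_1^\eta(u_j,u)\to 0$ along the subsequence upgrades to convergence of the full Cauchy sequence by a standard argument. I do not anticipate difficulty beyond the bookkeeping for the infinite envelope; everything else is a direct transcription of the smooth argument, now powered entirely by the monotone-continuity and envelope results proved earlier in this section.
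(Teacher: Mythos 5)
Your proof is correct and follows essentially the same route as the paper's: the paper likewise passes to a subsequence with $d_1^\eta(u_j,u_{j+1})\leq 2^{-j}$, forms the finite tail envelopes $P_\eta(u_k,\dots,u_{k+l})$, controls them via the contractivity of $v\mapsto P_\eta(v,\cdot)$ (Proposition \ref{prop: contractivity}), passes to the decreasing limit using Lemma \ref{lemma: d_1monotonone_conv}, and then takes the increasing limit in $k$ of the resulting envelopes to produce the limit point. Your bookkeeping for the infinite envelope and the final sandwich estimate matches the paper's argument in all essentials.
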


\begin{proof} We need to argue the completeness. Suppose $\{u_j \}_j \subset \mathcal E^1_\eta(Y)$ is a Cauchy sequence. We can assume without loss of generality that $d_1^\eta(u_j,u_{j+1}) \leq 1/2^j$.

We introduce $v^k_l = P_\eta(u_k,u_{k+1},\ldots,u_{k+l}) \in \mathcal E^1_\eta(Y), \ l,k \in \Bbb N$. We argue first that each decreasing sequence $\{ v^k_l\}_{l \in \Bbb N}$ is $d_1^\eta-$Cauchy. Given our assumptions, this will follow if we show that $d_1^\eta(v^k_{l+1},v^k_l) \leq d_1^\eta (u_{l+1},u_{l}).$
We observe that $v^k_{l+1}=P_\eta(v^k_l,u_{k+ l+1})$ and $v^k_l=P_\eta(v^k_l,u_{k+l})$. Using this and Proposition \ref{prop: contractivity} we can write:
$$d_1^\eta(v^k_{l+1},v^k_l) = d_1^\eta(P_\eta(v^k_l,u_{k+l+1}),P_\eta(v^k_l,u_{k+l})) \leq d_1^\eta(u_{k+l+1}, u_{k+l})\leq \frac{1}{2^{k+l}}.$$

As we have shown in Lemma \ref{lemma: d_1monotonone_conv}, it follows now that each sequence $\{ v^k_l\}_{l \in \Bbb N}$  is $d_1^\eta-$convergening to some $v^k \in \mathcal E^1_\eta(Y)$. Using the same trick as above, one can prove:
$$d_1^\eta(v^k,v^{k+1}) =\lim_{l \to \infty}d_1^\eta(v^k_{l+1},v^{k+1}_l)= \lim_{l \to \infty}d_1^\eta(P_\eta(u_k,v^{k+1}_{l}),P_\eta(u_{k+1},v^{k+1}_l))\leq d_1^\eta (u_k,u_{k+1}),$$
as $d_1^\eta(u_k,u_{k+1}) \leq \frac{1}{2^k}$ it follows that $v^k$ is an increasing $d_1^\eta$-Cauchy sequence, hence its limit $v = \lim_k v^k$ satisfies $v \in \mathcal E^1_\eta(Y)$ (Lemma \ref{lemma: d_1monotonone_conv}). Using Proposition \ref{prop: contractivity}, we obtain the following estimates, confirming that $v^k$ and $u_k$ are equivalent $d_1^\eta$-Cauchy sequences:
\begin{flalign*}
d_1^\eta(v^k,u_k) &=\lim_{l \to \infty}d_1^\eta(v^k_l,u_k)=\lim_{l \to \infty}d_1^\eta((P_\eta(u_k,v^{k+1}_{l-1}),P_\eta(u_k,u_k))\\
&\leq\lim_{l \to \infty}d_1^\eta(v^{k+1}_{l-1},u_k)=\lim_{l \to \infty}d_1^\eta(P_\eta(u_{k+1},v^{k+2}_{l-2}),u_k)\\
&\leq \lim_{l \to \infty}d_1^\eta(P_\eta(u_{k+1},v^{k+2}_{l-2}),u_{k+1}) + d_1^\eta(u_{k+1},u_k)\\
&\leq \lim_{l \to \infty} \sum_{j=k}^{l+k}d_1^\eta(u_j,u_{j+1}) \leq \frac{1}{2^{k-1}}.
\end{flalign*}
As $v^k \to_{d^1_\eta} v \in \mathcal E_\eta^1(Y)$, the result follows.
\end{proof}

With the above theorem in place, we can now follow exactly the same arguments as in  \cite[Theorem 3, Theorem 5(ii), Proposition 5.9]{da2} to prove the following result with multiple points, giving additional properties of $d_1$-convergence:

\begin{theorem}\label{thm: d_1convergencethm}(i)There exists $C=C(n)>1$ such that 
\begin{equation}\label{eq: d_1growthchar}
\frac{1}{C}d_1^\eta(u_0,u_1) \leq \int_Y |u_1 - u_0| \eta_{u_0}^n + \int_Y |u_1 - u_0| \eta_{u_1}^n \leq Cd_1^\eta(u_0,u_1), \ \ u_0,u_1 \in \mathcal E^1_\eta(Y).
\end{equation}
(ii) For any $u_j,u \in \mathcal E^1_\eta(Y)$ we have $d_1^\eta(u_j,u) \to 0$ if and only if $\int_Y |u_j-u|\eta^n \to 0$ and $I_\eta(u_j) \to I_\eta(u)$. \\
(iii) If $d_1^\eta(u_j,u) \to 0$, then $\int_X |u_j - u|\eta_v^n \to 0$ for any $v \in \mathcal E^1_\eta(Y)$.
\end{theorem}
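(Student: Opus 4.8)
The plan is to transcribe the arguments of \cite[Theorem 3, Theorem 5(ii), Proposition 5.9]{da2}: those proofs use the metric $d_1$ only through its explicit formula \eqref{eq: d_1explicit_ext}, the Lipschitz bound $|I_\eta(u_0)-I_\eta(u_1)|\le d_1^\eta(u_0,u_1)$ (immediate, since $P_\eta(u_0,u_1)\le u_0,u_1$ and $I_\eta$ is monotone), the contractivity of rooftop envelopes (Proposition \ref{prop: contractivity}), and the monotone continuity of $I_\eta$ and of non-pluripolar Monge--Amp\`ere measures (Proposition \ref{prop: AM_mon_cont}, Theorem \ref{thm: E1eqvdef}) --- all now available over $(Y,\eta)$. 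The recurring device is to reduce a statement over $(Y,\eta)$ to the same statement over the K\"ahler data $(Y,\eta+\varepsilon\eta')$, where \cite{da2} applies directly, and to let $\varepsilon\to0$ using Proposition \ref{prop: lotsoflimits}; afterwards one passes from bounded potentials to $\mathcal E^1_\eta(Y)$ by truncating with $\max(\cdot,-j)$.

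For (i) I would first treat bounded $u_0,u_1$. Since $\PSH_\eta(Y)\cap L^\infty\subset\PSH_{\eta+\varepsilon\eta'}(Y)\cap L^\infty$, \cite[Theorem 3]{da2} in the class $[\eta+\varepsilon\eta']$ gives \eqref{eq: d_1growthchar} for $d_1^{\eta+\varepsilon\eta'}$ and $(\eta+\varepsilon\eta')_{u_i}^n$, with constant $C=C(n)$ depending only on $n$, hence uniform in $\varepsilon$. Expanding $(\eta+\varepsilon\eta'+i\ddbar u_i)^n$ multilinearly, the terms carrying a factor $\varepsilon\eta'$ have total mass $O(\varepsilon)$ (a cohomological quantity, as $u_i$ is bounded) and are weighted by the bounded function $|u_1-u_0|$, so they drop out as $\varepsilon\to0$; together with $d_1^{\eta+\varepsilon\eta'}(u_0,u_1)\to d_1^\eta(u_0,u_1)$ (Proposition \ref{prop: lotsoflimits}(iii), constant sequences) this proves \eqref{eq: d_1growthchar} for bounded potentials. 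For $u_0,u_1\in\mathcal E^1_\eta(Y)$ apply this to $u_i^j:=\max(u_i,-j)$: the leftmost term converges by \eqref{eq: d_1sequence_ext}; for the middle term use $|u_1^j-u_0^j|\nearrow|u_1-u_0|$ and $\mathbbm{1}_{\{u_0>-k\}}\eta_{u_0^j}^n\nearrow\mathbbm{1}_{\{u_0>-k\}}\eta_{u_0}^n$ on each fixed sublevel set to pass to the limit by monotone convergence, then let $k\to\infty$; the matching inequality needs the tail $\int_{\{u_0\le-j\}}|u_1^j-u_0^j|\eta_{u_0^j}^n\to0$, which holds since there $0\le|u_1^j-u_0^j|\le j+O(1)$ and $j\cdot\eta_{u_0^j}^n(\{u_0\le-j\})=\int_{\{u_0\le-j\}}|u_0^j|\eta_{u_0^j}^n\to0$ for $u_0\in\mathcal E^1_\eta(Y)$ (Theorem \ref{thm: E1eqvdef}, \cite{begz}).

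For (ii), the forward direction follows from $|I_\eta(u_j)-I_\eta(u)|\le d_1^\eta(u_j,u)$ and the case $v=0$ of (iii). For the converse it suffices to show every subsequence of $\{u_j\}$ has a sub-subsequence along which $d_1^\eta\to0$; so by compactness of $\eta$-psh functions we may assume $u_j\to u$ both in $L^1$ and a.e.\ with $\sup_Y u_j$ bounded. Put $\tilde u_k:=(\sup_{l\ge k}u_{j_l})^*\in\PSH_\eta(Y)$; then $\tilde u_k\ge u_{j_k}$, so $\tilde u_k\in\mathcal E^1_\eta(Y)$ by monotonicity of $I_\eta$, and $\tilde u_k\searrow u$ because the ($\eta$-psh) decreasing limit agrees with $u$ a.e. By Lemma \ref{lemma: d_1monotonone_conv}(i), $d_1^\eta(\tilde u_k,u)\to0$; and by the identity $d_1^\eta(w,v)=I_\eta(v)-I_\eta(w)$ for $w\le v$ together with Proposition \ref{prop: AM_mon_cont}, $d_1^\eta(u_{j_k},\tilde u_k)=I_\eta(\tilde u_k)-I_\eta(u_{j_k})\to I_\eta(u)-I_\eta(u)=0$. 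The triangle inequality finishes the argument.

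For (iii), part (i) already delivers the cases $v\in\{u,u_j\}$, namely $\int_Y|u_j-u|(\eta_{u_j}^n+\eta_u^n)\to0$. A general $v\in\PSH_\eta(Y)\cap L^\infty$ is treated via the mixed Monge--Amp\`ere estimates of \cite[Section 5]{da2} --- proved over K\"ahler classes with dimensional constants and carried to $(Y,\eta)$ by the same $\varepsilon\to0$ and truncation scheme --- and a general $v\in\mathcal E^1_\eta(Y)$ by writing $v^k=\max(v,-k)\searrow v$, splitting $\int_Y|u_j-u|\eta_v^n$ over $\{v>-k\}$ (where $\eta_v^n=\eta_{v^k}^n$, so the integral tends to $0$ in $j$ by the bounded case applied to $v^k$) and $\{v\le-k\}$, and bounding the tail uniformly in $j$ using $\sup_j\int_Y|u_j-v|\eta_v^n\le C\sup_j d_1^\eta(u_j,v)<\infty$ from part (i). I expect the genuinely delicate points to be precisely this transfer --- i.e.\ reproducing \cite[Proposition 5.9]{da2} in the degenerate setting --- and the tail estimates entering the $\mathcal E^1$-extension of part (i); beyond these, no idea not already contained in \cite{da2} and in the $\varepsilon$-approximation of Proposition \ref{prop: lotsoflimits} is needed.
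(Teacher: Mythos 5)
Your overall strategy is exactly the one the paper takes: the paper's ``proof'' of this theorem consists of the single remark that one follows \cite[Theorem 3, Theorem 5(ii), Proposition 5.9]{da2} verbatim, using the $\varepsilon$-approximation of Proposition \ref{prop: lotsoflimits} and truncation by $\max(\cdot,-j)$, which is precisely your plan. Your treatment of (i) (uniform dimensional constant over the classes $[\eta+\varepsilon\eta']$, the $O(\varepsilon)$ mass of the mixed terms, plurifine locality of $\eta_{u_0^j}^n$ on $\{u_0>-k\}$, and the tail control via $\int_{\{u_0\le -j\}}|u_0^j|\eta_{u_0^j}^n\to0$) and of the converse in (ii) (the upper envelopes $\tilde u_k=(\sup_{l\ge k}u_{j_l})^*\searrow u$ together with $d_1^\eta(u_{j_k},\tilde u_k)=I_\eta(\tilde u_k)-I_\eta(u_{j_k})$) are sound and match the source.

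The one step that does not hold as written is the tail estimate in (iii) for unbounded $v$. Knowing $\sup_j\int_Y|u_j-v|\eta_v^n\le C\sup_j d_1^\eta(u_j,v)<\infty$ gives only a uniform bound on the total mass of the measures $|u_j-v|\eta_v^n$; it does not make $\int_{\{v\le -k\}}|u_j-u|\eta_v^n$ small uniformly in $j$ as $k\to\infty$ (a uniformly bounded family of finite measures need not have uniformly small tails, and no equi-integrability of $|u_j-u|$ with respect to $\eta_v^n$ has been established). The way \cite[Proposition 5.9]{da2} actually closes this is by sandwiching rather than truncating $v$: after passing to a subsequence with $d_1^\eta(u_{j_k},u)\le 2^{-k}$, one produces $p_k\le u_{j_k}\le \tilde u_k$ with $\tilde u_k=(\sup_{l\ge k}u_{j_l})^*\searrow u$ and $p_k=\lim_m P_\eta(u_{j_k},\dots,u_{j_{k+m}})\nearrow u$ a.e., both in $\mathcal E^1_\eta(Y)$ --- the increasing envelopes are exactly the objects already built in the completeness proof (Proposition \ref{prop: E1complete}). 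Then $\int_Y|u_{j_k}-u|\eta_v^n\le\int_Y(\tilde u_k-u)\eta_v^n+\int_Y(u-p_k)\eta_v^n$, and both terms tend to $0$ by monotone convergence against the fixed measure $\eta_v^n$, using the finiteness of $\int_Y|w|\eta_v^n$ for $w,v\in\mathcal E^1_\eta(Y)$ (the fundamental inequality of \cite{begz}, or your part (i)). With this replacement the argument for (iii), and hence the whole proof, goes through; the remaining ingredients you list are the ones the paper relies on.
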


Finally, let us relate all of the above to the geometry of the space $\mathcal H_\o(X)$ and $\mathcal E^1_\o(X)$ respectively. As noted in Section 3.1, using the pullback $\pi^*$ we can identify $\mathcal E^1_\o(X)$ with $\mathcal E^1_\eta(Y)$. Using this, one can introduce a complete geodesic metric space structure on $\mathcal E^1_\o(X)$ via pullback of geometric data. Namely $d_\o^1:\mathcal E^1_\o(X) \times \mathcal E^1_\o(X) \to \Bbb R$ is introduced as 
\begin{equation}\label{eq: d_1_odef}
d_\o^1(u_0,u_1):=d_\eta^1(\pi^*u_0,\pi^* u_1).
\end{equation}
Also, given $u_0,u_1\in \mathcal E^1_
\o(X)$ the finite energy geodesic $[0,1] \ni t \to u_t \in \mathcal E^1_\o(X)$ connecting $u_0,u_1$ is just the curve for which $\pi^* u_t = v_t$, where $[0,1] \ni t \to v_t \in \mathcal E^1_\eta(Y)$ is the finite energy geodesic connecting $\pi^* u_0,\pi^* u_1$. Finally, we arrive at the main theorem of this section, confirming that the choice of resolution $\pi:Y \to X$ did not play a role in our constructions above:
\begin{theorem}\label{thm: mainmetricthm_singular} Given a normal K\"ahler space $(X,\o)$ with Hodge metric $\o$, the complete geodesic metric space $(\mathcal E^1_
\o(X),d^\o_1)$ and its finite energy geodesics defined above are independent of the desingularization $\pi: Y \to X$. Additionally, $d_1^\o$ satisfies the following explicit formula:
\begin{equation}\label{eq: d1o_formula}
d_1^\o(u_0,u_1)= I_\o(u_0)+I_\o(u_1)-2I_\o(P_\o(u_0,u_1)).\end{equation}
Finally $\overline{(\mathcal H_\o(X),d_1^\o)} = (\mathcal E^1_
\o(X),d^\o_1)$. 
\end{theorem}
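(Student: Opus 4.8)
The plan is to transfer all the work done for the semi-positive structure $(Y,\eta)$ back down to $(X,\o)$ via the identifications recalled in Section~3.1, and then to argue that nothing depended on the choice of resolution. First I would fix a desingularization $\pi:Y\to X$ with $\eta=\pi^*\o$. By \eqref{eq: PSH_eqv} and the subsequent remarks, pullback gives a bijection $\pi^*:\PSH_\o(X)\to\PSH_\eta(Y)$ restricting to bijections on the $L^\infty$ subsets and, as noted in Section~3.1, on the finite energy classes: $\pi^*\mathcal E^1_\o(X)=\mathcal E^1_\eta(Y)$. Moreover $I_\eta(\pi^*u)=I_\o(u)$ for all $u\in\PSH_\o(X)$, since $\eta_u^n$ does not charge the pluripolar set $\pi^{-1}(X_{sing})$. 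Since $\pi^*$ is order-preserving and a bijection onto $\PSH_\eta(Y)$, it commutes with the envelope operation, i.e.\ $\pi^*P_\o(u_0,u_1)=P_\eta(\pi^*u_0,\pi^*u_1)$; indeed $\pi^*P_\o(u_0,u_1)$ is $\eta$-psh, lies below $\pi^*u_0,\pi^*u_1$, and is maximal with this property because any competitor on $Y$ descends to a competitor on $X$. Combining these three facts with the explicit formula \eqref{eq: d1o_formula_on_Y}-type identity \eqref{eq: d_1explicit_ext} for $d_1^\eta$ immediately gives \eqref{eq: d1o_formula}: $d_1^\o(u_0,u_1):=d_1^\eta(\pi^*u_0,\pi^*u_1)=I_\o(u_0)+I_\o(u_1)-2I_\o(P_\o(u_0,u_1))$.

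Next, since the right-hand side of \eqref{eq: d1o_formula} is manifestly intrinsic to $(X,\o)$ — it involves only $I_\o$ and $P_\o$, both defined purely on $X$ — the metric $d_1^\o$ does not depend on the choice of $\pi$. The same holds for the finite energy geodesics: the finite energy geodesic on $Y$ connecting $\pi^*u_0,\pi^*u_1$ is defined intrinsically on $X$ via the bounded geodesics of $(X,\o)$ (the sup-envelope formula \eqref{eq: boundedgeodformulaonY} descends, using \eqref{eq: PSH_eqv} applied on $S\times X$ and $S\times Y$, since $\mathrm{pr}_2^*\eta=(\mathrm{id}_S\times\pi)^*\mathrm{pr}_2^*\o$), and the decreasing limit defining the finite energy geodesic is preserved under pullback by monotone continuity of $I_\eta$ (Proposition~\ref{prop: AM_mon_cont}). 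Hence both the metric space structure and its distinguished geodesics descend to $(X,\o)$ canonically, and $(\mathcal E^1_\o(X),d_1^\o)$ is a complete geodesic metric space by Propositions~\ref{prop: E1extension} and \ref{prop: E1complete} transported through the isometry $\pi^*$.

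It remains to identify $(\mathcal E^1_\o(X),d_1^\o)$ with the completion of $(\mathcal H_\o(X),d_1^\o)$. For this I would again pass to $Y$. The content is: $\mathcal H_\o(X)$, viewed inside $\mathcal E^1_\o(X)$, is $d_1^\o$-dense, and $(\mathcal E^1_\o(X),d_1^\o)$ is complete (already established). Density is the point that genuinely uses the Hodge hypothesis: by Theorem~\ref{thm: EGZapprox} (Corollary~C of \cite{cgz}), every $u\in\PSH_\eta(Y)$ — in particular every element of $\mathcal E^1_\eta(Y)$ — is a decreasing pointwise limit of a sequence $u_j\in\pi^*\mathcal H_\o(X)$. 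One may assume $u_j$ and $u$ lie in $\mathcal E^1_\eta(Y)$ (truncate or note $u_j\geq u$ so $I_\eta(u_j)\geq I_\eta(u)>-\infty$), and then Lemma~\ref{lemma: d_1monotonone_conv}(i) gives $d_1^\eta(u_j,u)\to 0$. Pulling back, $\pi^*\mathcal H_\o(X)$ is $d_1^\eta$-dense in $\mathcal E^1_\eta(Y)$, equivalently $\mathcal H_\o(X)$ is $d_1^\o$-dense in $\mathcal E^1_\o(X)$. A dense subset of a complete metric space has that space as its completion, which gives $\overline{(\mathcal H_\o(X),d_1^\o)}=(\mathcal E^1_\o(X),d_1^\o)$.

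The only genuinely delicate input here is the approximation Theorem~\ref{thm: EGZapprox} of Coman--Guedj--Zeriahi, which is where integrality of $[\o]$ (the Hodge condition) enters and which we invoke as a black box; everything else is a bookkeeping exercise showing that $\pi^*$ intertwines the envelope, the energy, the metric, and the geodesics, so that the constructions on $Y$ descend unambiguously to $X$. I expect the main subtlety in writing this up to be making the descent of the envelope operation and of the sup-envelope formula for geodesics fully rigorous across the identification \eqref{eq: PSH_eqv}, together with the routine check that the truncation/monotonicity maneuver keeps the approximants in the finite energy class.
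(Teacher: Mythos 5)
Your proposal is correct and follows essentially the same route as the paper: define $d_1^\o$ by pullback, show $\pi^*$ intertwines $I$, the rooftop envelope, and the geodesic sup-formula so that \eqref{eq: d1o_formula} is intrinsic to $(X,\o)$, and obtain density of $\mathcal H_\o(X)$ from Theorem \ref{thm: EGZapprox} combined with Lemma \ref{lemma: d_1monotonone_conv}(i). The only cosmetic difference is that where you justify the descent of envelope competitors by the bijectivity of $\pi^*$ on psh classes, the paper spells this out by explicitly invoking Demailly's extension theorem to extend ${\pi^{-1}}^{*}v$ across $X_{sing}$ — but that is precisely the ingredient underlying \eqref{eq: PSH_eqv}, so the arguments coincide.
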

\begin{proof}As $\pi^*$ gives a bijection between $\mathcal E^1_\o(X)$ and $\mathcal E^1_\eta(Y)$ it follows that $(\mathcal E^1_\o(X),d_1^\o)$ is complete, as defined in \eqref{eq: d_1_odef}. We argue that $d_1^\o$ is independent of $\pi$, i.e., it satisfies \eqref{eq: d1o_formula}. 

First we note that $\pi^* P_\o(u_0,u_1)=P_\eta(\pi^* u_0,\pi^* u_1)$. Indeed, if $v \in \PSH_\o(X)$ such that $v \leq u_0,u_1$, then $\pi^* v \in \PSH_\eta(Y)$ satisfies  $\pi^* v \leq \pi^* u_0, \pi^* u_1$, proving that $\pi^* P_\o(u_0,u_1)\leq P_\eta(\pi^* u_0,\pi^* u_1)$. For the other inequality, let $v:=P_\eta(\pi^* u_0,\pi^* u_1)$. Clearly, $v \leq \pi^* u_0, \pi^* u_1$ and on $X_{reg}$ we have ${\pi^{-1}}^*v \leq u_0,u_1$. Using \cite[Theorem 1.7]{de1} we get that ${\pi^{-1}}^*v$ extends to a function $h \in \PSH_\o(X)$ satisfying $h \leq u_0,u_1$, hence $h \leq P_\o(u_0,u_1)$. Applying $\pi^*$ to this inequality we obtain that $\pi^* P_\o(u_0,u_1)\geq \pi^* h = P_\eta(\pi^* u_0,\pi^* u_1)$. 

Let $u_0,u_1 \in \mathcal E^1_\o(X)$.
Using similar ideas one can show that the finite energy geodesic $[0,1] \ni t \to u_t \in \mathcal E^1$ joining $u_0,u_1$ defined above does not depend on $\pi$. Indeed, the following expected formula can be given:
\begin{equation}%\label{eq: finitegeodformulaonX}
u = \sup \{v \in \PSH_{\textup{pr}_2^*\o}(S \times X) \textup{ s.t. } v(t+ir,y)=v(t,y) \textup{ and } v_0 \le u_0, v_1 \leq u_1 \}.
\end{equation}
By \eqref{eq: d_1_odef} we can continue to write:
$$d_1^\o(u_0,u_1)=d_1^\eta(\pi^*u_0,\pi^*u_1) = I_\eta(\pi^* u_0)+I_\eta( \pi^* u_1)-2I_\eta(P_\eta(\pi^* u_0,\pi^* u_1)).$$
Above we argued that $P_\eta(\pi^* u_0,\pi^* u_1)= \pi^*P_\o(u_0,u_1)$ and since $I_\o(\cdot)=I_\eta(\pi^*(\cdot))$, \eqref{eq: d1o_formula} follows.

Finally, $\overline{(\mathcal H_
\o(X),d^\o_1)}=(\mathcal E^1_
\o(X),d^\o_1)$ follows from Theorem \ref{thm: EGZapprox} and Lemma \ref{lemma: d_1monotonone_conv}.
\end{proof}

Due to this last result, in general we will not distinguish between the complete geodesic metric spaces $(\mathcal E^1_\o(X),d_1^\o)$ and $(\mathcal E^1_\eta(Y),d_1^\eta)$. A similar point of view is also present in \cite{bbegz} and we hope this will not be a source of confusion. 

Let $\textup{Aut}_0(X,J)$ be the identity component of the group of holomorphic automorphisms of $X$. Unfortunately, this group does not lift ``efficiently" to the desingularization $Y$. Still, as in the K\"ahler case, this group acts by $d_1^\o$-isometries on the normalization of $\mathcal E^1_\o(X)$:
\begin{proposition}\label{prop: Gd1isometry}
$G:=\textup{Aut}_0(X,J)$ acts by $d_1^\o$-isometries on $\mathcal E^1_\o(X) \cap I_\o^{-1}(0)$.
\end{proposition}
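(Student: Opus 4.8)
The plan is to reduce the statement to the already-established explicit formula \eqref{eq: d1o_formula} for $d_1^\o$ together with the invariance of the ingredients $I_\o$ and $P_\o$ under automorphisms. Concretely, fix $g \in G = \textup{Aut}_0(X,J)$. Since $g$ is a biholomorphism of $X$ fixing the complex structure, $g^* \o$ is another K\"ahler form in $[\o]_{dR}$ (here one uses that $g$ lies in the identity component, so $g^*$ acts trivially on $H^{1,1}(X,\RR)$, hence $g^*\o = \o + i\ddbar f_g$ for some $f_g \in C^\infty(X)$). The action on potentials $u \in \mathcal E^1_\o(X)$ is then the usual one, $g.u := u\circ g + f_g$, so that $\o_{g.u} = g^*(\o_u)$.

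The first step is to check that the action is well-defined on $\mathcal E^1_\o(X)$ and preserves $I_\o^{-1}(0)$. Because $g$ is biholomorphic and $\o_u$ is a closed positive $(1,1)$-current, the non-pluripolar Monge--Amp\`ere measures transform by pullback, $\o_{g.u}^j \wedge \o^{n-j}$-type mixed terms pull back correctly, and a change of variables in \eqref{eq: AMdef} gives $I_\o(g.u) = I_\o(u) + (\textup{const depending on }g)$. In fact the constant is independent of $u$ by the cocycle formula \eqref{eq: AM_diff}, and taking $u$ to vary shows $I_\o(g.u) - I_\o(g.v) = I_\o(u) - I_\o(v)$; to nail that the additive constant is zero one can either normalize using the fixed point, or more simply observe that since $g \in G_0$ is connected to the identity, $t \mapsto I_\o(g_t.u)$ is continuous and the additive constant, being a homomorphism-like quantity, must vanish — hence $I_\o(g.u) = I_\o(u)$ and the $G$-action preserves $I_\o^{-1}(0)$.

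The second step is the isometry property itself. Given $u_0, u_1 \in \mathcal E^1_\o(X) \cap I_\o^{-1}(0)$, one checks $g.P_\o(u_0,u_1) = P_\o(g.u_0, g.u_1)$: the map $v \mapsto g.v$ is an order-preserving bijection of $\PSH_\o(X)$ onto itself (with inverse $g^{-1}.(\,\cdot\,)$), so it carries the defining family $\{v \le u_0, u_1\}$ onto $\{w \le g.u_0, g.u_1\}$ and commutes with the supremum. Combining this with $I_\o \circ g = I_\o$ (from Step 1) and plugging into \eqref{eq: d1o_formula},
\begin{align*}
d_1^\o(g.u_0, g.u_1) &= I_\o(g.u_0) + I_\o(g.u_1) - 2 I_\o(P_\o(g.u_0, g.u_1)) \\
&= I_\o(u_0) + I_\o(u_1) - 2 I_\o(g.P_\o(u_0,u_1)) = d_1^\o(u_0,u_1).
\end{align*}

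The main obstacle I anticipate is the bookkeeping around pulling the $G$-action through the desingularization $\pi: Y \to X$ — the paper explicitly flags that $\textup{Aut}_0(X,J)$ does not lift efficiently to $Y$. The cleanest way around this is precisely to \emph{avoid} working on $Y$: since Theorem \ref{thm: mainmetricthm_singular} already gives the intrinsic formula \eqref{eq: d1o_formula} purely in terms of objects on $X$ (the energy $I_\o$ defined by integrals over $X_{\textup{reg}}$ and the envelope $P_\o$ in $\PSH_\o(X)$), all transformation properties can be verified directly on $X$ using that $G$ acts on $X_{\textup{reg}}$ by biholomorphisms and preserves the analytic set $X_{\textup{sing}}$ (which is negligible for non-pluripolar masses). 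The only genuinely delicate point is then justifying $I_\o \circ g = I_\o$ exactly (not just up to a constant), for which the connectedness of $G$ and continuity of $I_\o$ along the $\o_{u_t}$-mixed-mass integrals is the right tool.
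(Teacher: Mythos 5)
Your proposal is essentially the paper's own proof: define $g.u = u\circ g + h_g$, use the cocycle formula \eqref{eq: AM_diff} together with a change of variables on $X_{\textup{reg}}$ to control $I_\o$, observe that $P_\o$ commutes with the action because $v\mapsto g.v$ is an order-preserving bijection of $\PSH_\o(X)$, and conclude from the explicit formula \eqref{eq: d1o_formula}, all carried out directly on $X$ without lifting $G$ to the resolution. One caveat: your ``more simple'' alternative for killing the additive constant via connectedness of $G$ does not work as stated --- the constant genuinely depends on the choice of $f_g$, which is only determined up to an additive constant --- so you must take the first option and normalize $h_g$ by $I_\o(h_g)=0$; with that normalization, \eqref{eq: AM_diff} and the change of variables already give $I_\o(g.u)=I_\o(u)$ on $I_\o^{-1}(0)$ exactly, with no appeal to connectivity (this is precisely what the paper does).
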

\begin{proof}We argue that for $u \in \mathcal H_
\o(X) \cap I_\o^{-1}(0)$ we have 
\begin{equation}\label{eq: pullbackformula}
g.u = h_g + u \circ g \in \mathcal H_\o(X) \cap I_\o^{-1}(0)
\end{equation} 
for any $g \in G$, where $h_g \in \mathcal H_\o(X)\cap I_\o^{-1}(0)$ is the potential for which $g^* \o = \o_{h_g}$. Indeed, first we notice that we have $g^* \o = \o + i\ddbar (h_g + u \circ g)$, hence \eqref{eq: pullbackformula} holds up to a constant. As $I_\o(h_g)=0$, using 
\eqref{eq: AM_diff}, the next line of calculations implies $I_\o(h_g + u \circ g)=0$, finishing the proof of \eqref{eq: pullbackformula}:
$$I_\o(h_g + u \circ g) -I_\o(h_g)= \frac{1}{(n+1)V}\sum_{j=0}^n\int_{X_{reg}} u \circ g \o_{g.u}^{j}\wedge \o_{h_g}^{n-j}=I_\o(u)-I_\o(0)=0.$$
Suppose $u_0,u_1 \in \mathcal H_\o(X)\cap I_\o^{-1}(0)$. From \eqref{eq: pullbackformula} it follows that 
\begin{equation}
P_\o(g.u_0,g.u_1) = g. P_\o(u_0,u_1)=h_g + P_\o(u_0,u_1) \circ g.
\end{equation}
Using this and \eqref{eq: AM_diff} we can write
\begin{flalign*}
I_\o(g.u_0)&-I_\o(P_\o(g.u_0.g.u_1))= I_\o(g.u_0)-I_\o(g.P_\o(u_0,u_1)) \\
&=\frac{1}{(n+1)V}\sum_{j=0}^n\int_{X_{reg}}(u_0-P_\o(u_0,u_1))\circ g \  \o_{g.u}^j \wedge \o_{P_\o(g.u_0.g.u_1)}^{n-j}.
\end{flalign*}
As the restriction of $g$ to $X_{reg}$ is an automorphism, a change of variables gives that this latter sum of integrals is equal to $I_\o(u_0)-I_\o(P_\o(u_0,u_1))$. As a consequence,  $d_1(g.u_0,g.u_1)=d_1(u_0,u_1)$ for $u_0,u_1 \in \mathcal H_\o(X)$, hence the action of $G$ on $\mathcal H_\o(X)$ does induce a $d_1$-isometry. As $\mathcal H_\o(X)$ is $d_1$-dense inside $\mathcal E^1_\o(X)$ it follows that this action extends to a $d_1$-isometric action on $\mathcal E^1_\o(X)$.
\end{proof}

\begin{lemma}Suppose $K \subset \textup{Aut}_0(X,J)$ is a compact subgroup with Lie algebra $\mathfrak k$. Let $u_0 \in \mathcal H_\o^0(X):=\mathcal H_\o(X) \cap I^{-1}(0)$ be such that $\o_{u_0}$ is $K$-invariant. For any $V \in J \mathfrak k$ the induced 1-parameter group of automorphisms $t \to f_t$ gives a  $d_1$-geodesic $[0,\infty) \ni t \to u_t \in \mathcal H_\o(X) \cap I_\o^{-1}(0)$, satisfyng $f^*_t{\o_{u_0}}=\o_{u_t}$, whose speed depends continuously on $V$.\label{lemma: reductivitycor}
\end{lemma}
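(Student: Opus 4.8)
The plan is to reduce the singular statement to the known smooth-K\"ahler fact via the approximation scheme $(Y,\eta+\varepsilon\eta')$ already used throughout this section, combined with the pullback identifications of Proposition \ref{prop: Gd1isometry}. First I would fix a desingularization $\pi: Y \to X$ and work with $\eta=\pi^*\o$, but the key point is that $V \in J\mathfrak k$ generates a genuine $\RR$-action on $X$ by holomorphic automorphisms (Hamiltonian with respect to $\o_{u_0}$ since $\o_{u_0}$ is $K$-invariant and $V$ is the complexification of a Killing field), so by the computation \eqref{eq: pullbackformula} in Proposition \ref{prop: Gd1isometry} we get $f_t^*\o_{u_0}=\o_{u_t}$ with $u_t = h_{f_t} + u_0\circ f_t \in \mathcal H_\o^0(X)$, a smooth curve in $t$. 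The content to be proved is (a) that $t \to u_t$ is a $d_1^\o$-geodesic and (b) that its $d_1^\o$-speed depends continuously on $V$.

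For (a), I would invoke the smooth K\"ahler case: in \cite{dr2} (and the circle of ideas in \cite{da2}) it is shown that for a K\"ahler metric $\omega$ and $V$ in the complexified Lie algebra of a compact group fixing a $K$-invariant potential, the automorphism orbit $t\to u_t$ is a $d_1$-geodesic — indeed the complexified curve $u(s,y)=u_{\mathrm{Re}\,s}(y)$ solves the homogeneous complex Monge--Amp\`ere equation because it is obtained by pulling back a fixed metric by a holomorphic family of automorphisms, so $\mathrm{pr}_2^*\omega + i\ddbar u$ has rank $\le n$. This last observation is purely local on $X_{reg}$, hence survives on the singular space: on $S\times X_{reg}$ the form $\mathrm{pr}_2^*\o + i\ddbar u$ is degenerate, so $u$ is the bounded geodesic connecting $u_0$ and $u_1:=u_1$ (for any finite endpoint), and then Proposition \ref{prop: E1extension} / Theorem \ref{thm: mainmetricthm_singular} identify this with the $d_1^\o$-geodesic. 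Concretely: lift to $Y$, note $\pi^* u$ solves $(\mathrm{pr}_2^*\eta + i\ddbar\,\pi^*u)^{n+1}=0$ on $S\times (Y\setminus\pi^{-1}(X_{sing}))$, hence (as this is an analytic set of measure zero and the potentials are bounded on compact $t$-intervals, so Bedford--Taylor theory applies) everywhere; uniqueness of bounded geodesics via the comparison principle \cite[Theorem 21]{bl1} — used already in Proposition \ref{prop: lotsoflimits} — then shows $\pi^* u$ is \emph{the} bounded geodesic, which is $d_1^\eta$-geodesic by Proposition \ref{prop: E1extension}, hence $t\to u_t$ is a $d_1^\o$-geodesic. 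For the speed, since $d_1^\o(u_0,u_t)$ is linear in $t$, the speed is simply $d_1^\o(u_0,u_1)$ along unit time, or more invariantly $\lim_{t\to 0^+} \tfrac{1}{t}d_1^\o(u_0,u_t) = \int_{X_{reg}} |\dot u_0|\,\o_{u_0}^n$ by \eqref{eq: d_1growthchar} (Theorem \ref{thm: d_1convergencethm}(i)) together with $d_1^\o$-linearity, and $\dot u_0$ is the Hamiltonian-type potential of $V$ with respect to $\o_{u_0}$, depending linearly (hence continuously) on $V\in J\mathfrak k$.

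For (b), I would make the continuity statement precise as: the map $J\mathfrak k \ni V \mapsto (\text{$d_1^\o$-speed of the orbit geodesic through }u_0)$ is continuous. Since the orbit geodesic is unit-parametrized only up to rescaling time, the cleanest route is to compute the speed as $\int_{X_{reg}} |\dot u_0|\,\o_{u_0}^n$ where $\dot u_0 = \tfrac{d}{dt}\big|_{t=0} u_t$; differentiating $u_t = h_{f_t}+u_0\circ f_t$ at $t=0$ gives $\dot u_0 = \dot h_0 + d u_0(V)$, and one checks $\dot h_0 + du_0(V)$ is exactly the real potential $\theta_V$ of the (real part of the) vector field $V$ with respect to $\o_{u_0}$, normalized by $\int_{X_{reg}}\theta_V\,\o_{u_0}^n=0$ (this normalization comes from $I_\o(u_t)\equiv 0$, differentiated). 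The assignment $V\mapsto \theta_V$ is $\RR$-linear from the finite-dimensional space $J\mathfrak k$ into $C^\infty(X_{reg})\cap L^\infty(X)$, and $V\mapsto \int_{X_{reg}}|\theta_V|\,\o_{u_0}^n$ is then a continuous (indeed, convex and positively homogeneous) function on $J\mathfrak k$; it is strictly positive for $V\ne 0$ because otherwise $\theta_V\equiv 0$ forces $V=0$ (the Hamiltonian of a nontrivial holomorphic field cannot be constant when normalized to mean zero).

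\textbf{Main obstacle.} The technically delicate point is justifying that $\pi^* u$ — a priori only known to solve the homogeneous complex Monge--Amp\`ere equation on the complement of the analytic set $S\times\pi^{-1}(X_{sing})$ — is genuinely the bounded geodesic on all of $S\times Y$, and in particular lies in $\mathrm{PSH}_{\mathrm{pr}_2^*\eta}(S\times Y)$ with the right boundary values and is $t$-Lipschitz. One must argue that the envelope formula \eqref{eq: boundedgeodformulaonY} produces the same curve: the pulled-back automorphism orbit $\pi^* u$ is a competitor in that envelope (it is $\mathrm{pr}_2^*\eta$-psh, $t$-invariant up to the obvious thing, and has the correct boundary data), and conversely any competitor is $\le \pi^* u$ because on $X_{reg}$ the orbit is literally the geodesic (by the smooth theory applied to the K\"ahler manifold $X_{reg}$, or by a direct maximum-principle argument using that $\o_{u_t}$ is isometric to $\o_{u_0}$), and the complement is pluripolar so the inequality propagates. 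Once this identification is secure, everything else is an application of results already established in the excerpt (Propositions \ref{prop: E1extension}, \ref{prop: lotsoflimits}, Theorem \ref{thm: d_1convergencethm}) plus elementary finite-dimensional continuity.
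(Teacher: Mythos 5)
For the geodesic claim your argument is essentially the paper's: the key observation is that $f_t(X_{reg})=X_{reg}$, so Mabuchi's computation shows the complexified orbit solves $(\textup{pr}_2^*\o+i\ddbar u)^{n+1}=0$ on $S\times X_{reg}$, which is exactly what the paper's definition of a bounded geodesic requires; the identification with \emph{the} bounded geodesic (your ``main obstacle'') is handled, as you say, by the comparison principle of \cite[Theorem 21]{bl1} already invoked in Proposition \ref{prop: lotsoflimits}, since the orbit complexification is bounded, locally $t$-Lipschitz, and extends across the pluripolar set $S\times X_{sing}$ as a $\textup{pr}_2^*\o$-psh function. The paper states this part in two lines without the desingularization detour; your more careful treatment is correct and fills in what the paper leaves implicit.

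The continuity-of-speed part is where you diverge from the paper, and where there is a genuine gap. You compute the speed as $\lim_{t\to 0^+}\frac{1}{t}d_1^\o(u_0,u_t)=\int_{X_{reg}}|\dot u_0|\,\o_{u_0}^n$ and cite \eqref{eq: d_1growthchar} for this; but Theorem \ref{thm: d_1convergencethm}(i) only gives a \emph{two-sided bound} with a dimensional constant $C(n)>1$, not an equality, and the exact infinitesimal Finsler formula for $d_1^\o$ is precisely one of the things this paper deliberately does \emph{not} establish in the singular setting (it defines $d_1^\o$ by the closed formula \eqref{eq: d_1 formula} and never proves it is the path-length metric of \eqref{eq: Finsler}). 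Comparability up to a constant does not yield continuity of the speed in $V$, so your argument via $V\mapsto\int|\theta_V|\,\o_{u_0}^n$ does not close. The paper's route avoids this entirely: since $t\mapsto d_1^\o(u_0,u_t)$ is linear, the speed is $d_1^\o(u_0,u_1)=I_\o(u_0)+I_\o(u_1)-2I_\o(P_\o(u_0,u_1))$; the correspondence $V\mapsto f_1$ is smooth, hence so is $V\mapsto u_1$ and $V\mapsto I_\o(u_1)$, while $V\mapsto I_\o(P_\o(u_0,u_1))$ is at least continuous, which gives the claim. Your proof is repaired by replacing the integral speed formula with this direct argument from the explicit $d_1^\o$-formula (or by combining $V\mapsto u_1$ continuous with Theorem \ref{thm: d_1convergencethm}(ii)).
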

\begin{proof} Let $u \in \PSH_{\textup{pr}_2^*\o}(\Bbb C^+ \times X)$ be defined by $u(s,x)= u_{\textup{Re }s}(x)$. We first note that $X_{reg}$ stays invariant under elements of $\textup{Aut}_0(X,J)$, in particular $f_t(X_{reg})=X_{reg}$. Hence the same argument   as in the smooth case \cite{mab} gives that on $\Bbb C^+ \times X_{reg}$ we have 
$$(\textup{pr}_2^* \o + i\ddbar u)^{n+1}=0,$$
giving that $[0,\infty) \ni t \to u_t \in \mathcal H_\o(X)\cap I_\o^{-1}(0)$ is a $d_1$-geodesic ray.

We turn to the last statement. The correspondence $V \to f_1$ is smooth, hence using the identity $f_1^* \o =\o_{u_1}$, so is the correspondence $V \to u_1$. Finally, this implies that the correspondence $V \to I_\o(u_1)$ is smooth and $V \to I_\o(P_\o(u_0,u_1))$ is at least continuous, giving that $V \to d_1(u_0,u_1)=I_\o(u_0)+I_\o(u_1)-2I_{\o}(P_\o(u_0,u_1))$ is also at least continuous. 
\end{proof}

\section{The existence/properness theorems for log Fano pairs}
\label{sec: existence/properness thm}
For a log Fano pair $(X,D)$, let us recall the definition of the K-energy and Ding functional from \cite[Section 4.1]{bbegz}. %Our starting point is the identification \eqref{eq: canbundid}. 
Let $h$ be the smooth metric on $-K_X-D$ such that $\Theta(h)=\o$. Let $p \in \Bbb N$ such that $p(K_X +D)$ is Cartier on $X$. If $\sigma$ is a non-vanishing section of the line bundle associated 
to $p(K_X + D)$ on an open subset $U$ of $X_{reg}$, one can introduce the following measure on $U$:
$$\mu = V \frac{(i^{pn}\sigma \wedge \overline{\sigma})^{1/p}}{ h^p(\sigma,\sigma)^{1/p}}.$$
One can see that $\mu$ is independent of the choice of $\sigma$ and $U$, and as such can be canonically extended to $X_{reg}$. We can now introduce the K-energy $\mathcal K_D : \mathcal E^1_\o(X) \to (-\infty,\infty]$ and the Ding energy $\mathcal F_D : \mathcal E^1_\o(X) \to \Bbb R$ by
$$\mathcal K_D(u) = \frac{1}{V}\int_{X_{reg}} \log \Big(\frac{\o^n_{u}}{\mu}\Big)-I_\o(u) + \frac{1}{V}\int_{X_{reg}} u \o_{ u}^n$$.
$$\mathcal F_D(u) = -I_\o(u) - \log \int_{X_{reg}} e^{-u}\mu.$$
It is well known that $\mathcal F_D$ is $d_1$-continuous and $\mathcal K_D$ is only $d_1$-lsc (Theorem \ref{thm: d_1convergencethm}(ii) and  \cite[Lemma 4.3]{bbegz}). Also, by \cite[Lemma 4.4(i)]{bbegz} the following estimate holds:
\begin{equation}\label{eq: FDKD_ineq}
\mathcal F_D \leq \mathcal K_D. 
\end{equation}
Recall the $J$ functional $J : \mathcal E^1_\o(X) \to \Bbb R$, defined by $J(u)= \int_{X_{reg}} u \o^n - I_\o(u).$
The same proof as in \cite[Proposition 5.5]{dr2}  gives the existence of $C(n)>1$ such that for $u \in \mathcal H^0_\o = \mathcal H_\o(X) \cap I_\o^{-1}(0)$ we have
\begin{equation}\label{eq: Jd_1eqv}
\frac{1}{C}J(u)-C \leq d_1^\o(0,u)\leq C J(u) + C.
\end{equation}

\subsection{The proof of Theorem \ref{thm: propernesstheorem}}

First notice that Theorem \ref{thm: metrictheorem} applies in this setting, as $[\omega] \in c_1(-K_X - D)$, hence a multiple of $\omega$ is a Hodge metric. We will verify the conditions of Theorem \ref{thm: ExistencePrinc} for the data $(\mathcal R,d,F,G)$, where $\mathcal R=\PSH_\o(X) \cap L^\infty \cap I_\o^{-1}(0)$, $d=d_1^{\o}$, $F=\mathcal F_D$ and $G= Aut_0(X,D)$. Given that $\mathcal F_D$ is $d^\o_1$-continuous and $\overline{\PSH_\o(X) \cap L^\infty\cap I_\o^{-1}(0)}^{d_1^\o}=\mathcal E^1_\o(X)\cap I_\o^{-1}(0)$, the  conditions \ref{a1}-\ref{a4} are verified. 

We now focus on the conditions \ref{p1}-\ref{p7}. Condition \ref{p1} is a theorem of Berndtsson \cite{brm1} adapted to the singular setting in \cite[Theorem 11.1]{bbegz}. Condition \ref{p2} is proved in exactly the same way as in \cite[Proposition 5.27]{dr2} that treats the case when $(X,\o)$ is smooth K\"ahler and $D$ is just a smooth connected divisor. By \cite[Theorem 4.8]{bbegz}, we have that any $\mathcal E^1_\o(X)$-minimizer of $\mathcal F_D$ is actually bounded K\"ahler--Einstein potential, hence \ref{p3} holds. Condition \ref{p4} was verified in Proposition \ref{prop: Gd1isometry}. Condition \ref{p5} is just \cite[Theorem 5.1]{bbegz}.

When a K\"ahler--Einstein metric exists, by \cite[Corollary 5.2]{bbegz} the group $G$ is reductive, hence the conditions of Lemma \ref{lemma: reductivitycor} are satisfied for $K$, the maximal compact subgroup of $G$ and a K\"ahler--Einstein potential $u_0$. This result in turn implies that the conditions of \cite[Proposition 6.8]{dr2} are satisfied, hence condition \ref{p6} holds. 

Condition \ref{p7} is classical  for $u_0,u_1 \in \mathcal H_{\o}(X)\cap I^{-1}_\o(0)$, and extends to $u_0,u_1 \in \PSH_{\o}(X)\cap L^\infty \cap I^{-1}_\o(0)$ by density. Finally,  Theorem \ref{thm: ExistencePrinc} and gives that a K\"ahler--Einstein potential inside $\PSH_{\o}(X)\cap L^\infty \cap I^{-1}_\o(0)$ exists if and only if $\mathcal F_D$ is $G$-invariant and there exists $C,B>0$ such that
$$\mathcal F_D(U) \geq Cd^\o_{1G}(G0,Gu) -B, \ \ u \in \PSH_\o(X) \cap L^\infty \cap I_\o^{-1}(0).$$
Since  $\mathcal H_\o^0(X)=\mathcal H_\o(X)\cap I^{-1}_\o(0)$ is $d_1$-dense in $\PSH_\o(X) \cap L^\infty \cap I_\o^{-1}(0)$, and we have the double estimate of \eqref{eq: Jd_1eqv}, the proof of Theorem \ref{thm: propernesstheorem} is complete for $F=\mathcal F_D$. 

In case $F=\mathcal K_D$, one has to proceed differently, because the proof of the convexity of the K-energy does not carry over directly from \cite{bb} to our more singular setting, though this is still expected to be true. Instead, by \eqref{eq: FDKD_ineq}, and the above arguments for $\mathcal F_D$, we get the implication (existence of KE) $\to$ (properness of $\mathcal K_D$). For the reverse direction assume that we have
$$\mathcal K_D(u) \geq C J_{G}(Gu)-B \geq C' d_{1G}^\o(G0,Gu) - B', \ \ u \in \mathcal H_\o^0(X).$$
Hence $\mathcal K_D$ is bounded from below and there exists a $d_1$-bounded sequence $u_j \in \mathcal H_\o^0(X)$ such that $\mathcal K_D(u_j) \to \inf_{u \in \mathcal E^1_\o(X)}\mathcal K_D(u)$. By \cite[Theorem 2.17]{bbegz} the sequence $u_j$ $d^\o_1$-subconverges to $u \in \mathcal E^1_\o(X)$ that minimizes $\mathcal K_D$. Finally \cite[Theorem 4.8]{bbegz} implies that $u$ is a K\"ahler--Einstein potential.

\subsection{The proof of Theorem \ref{thm: propernessthm_soliton}}

For the precise definition of the functionals $\mathcal K_V$ and $\mathcal F_V$ we refer to \cite{bwn}. The proof goes in exactly the same spirit as that of  Theorem \ref{thm: propernesstheorem}.

The main point is to verify the conditions of Theorem \ref{thm: ExistencePrinc}
for the data $(\mathcal R,d,F,G)$, where $\mathcal R=\PSH_\o^T(X) \cap L^\infty \cap I^{-1}_\o(0)$, $d=d_1^{\o}$, $F=\mathcal F_V$ and $G= Aut_0(X,V)$.  This is carried out exactly the same way as in the proof of the previous theorem, by replacing the ingredients from \cite{bbegz} with the corresponding ones from \cite{bwn}. To establish \ref{p1} we can use \cite[Propostion 3.1]{bwn}. Condition \ref{p2} is verified the same way as in the smooth case \cite[Proposition 5.29]{dr2}. Condition \ref{p3} is verified in \cite[Theorem 3.3]{bwn}. Condition \ref{p4} is verified using the soliton analog of Proposition \ref{prop: Gd1isometry}. Condition \ref{p5} is just  \cite[Theorem 3.6]{bwn}. Condition \ref{p6} again follows from the reductivity of $G$ in case a K\"ahler-Ricci soliton exists \cite[Corollary 3.7]{bwn}, and an application of \cite[Proposition 6.8]{dr2}. \ref{p7} is again standard.

\paragraph{Acknowledgments.} As we wanted to discuss as quickly as possible the relationship between energy properness and existence of special K\"ahler metrics on singular varieties in Theorem \ref{thm: propernesstheorem} and Theorem \ref{thm: propernessthm_soliton}, we took a very economical approach in establishing Theorem \ref{thm: metrictheorem}. This was possible due to the special characteristics of $L^1$-Mabuchi geometry, but as a result, issues related to regularity of weak geodesic segments where completely avoided. Without addressing these questions however, the more general $L^p$-Mabuchi geometries for singular classes likely cannot be explored. This is exactly the approach of the independent work by Di Nezza--Guedj that also discusses applications to canonical K\"ahler metrics \cite{DNG}. 

After the first version of the paper appeared, Jeff Streets informed us that that the main approximation result of \cite{egz2} had an incomplete proof (see also \cite{str,egz3}). Consequently, in the present paper the approximation result of \cite{cgz} has to be used instead that only applies in the case of integral K\"ahler classes (see Theorem \ref{thm: EGZapprox}). As a result, in this final version of the paper we can only prove Theorem \ref{thm: metrictheorem} for integral K\"ahler classes, though this more modest result is still enough to establish all the applications related to canonical K\"ahler metrics (Theorems 2.2-2.4).
 
We would like to thank Eleonora Di Nezza, Henri Guenancia, Swarnava Mukhopadhyay and Yanir Rubinstein for many helpful discussions. An initial version of this paper was circulated among experts in early 2016. 
The finishing touches where added in April 2016, when the author was a research member at MSRI, and was partially supported there by NSF grant DMS-1440140. The author's research is also partially supported by BSF grant 2012236 and NSF grant DMS-1610202.

\let\OLDthebibliography\thebibliography % squeezes Bibliography
\renewcommand\thebibliography[1]{
  \OLDthebibliography{#1}
  \setlength{\parskip}{1pt}
  \setlength{\itemsep}{1pt plus 0.3ex}
}

\begin{small}
{\sc University of Maryland}

\noindent {\tt tdarvas@math.umd.edu}
\end{small}
\end{document}